\documentclass[10pt]{amsart}
\usepackage{amscd}
\usepackage{amsmath}
\usepackage{amsfonts}
\usepackage{amssymb}
\usepackage{amsthm}
\usepackage{enumerate}
\usepackage[T1]{fontenc}
\usepackage{hyperref}
\usepackage{mathrsfs}
\usepackage{stackrel}
\usepackage[all]{xy}
\usepackage{yhmath}

\DeclareMathOperator{\gr}{gr}
\DeclareMathOperator{\Gr}{Gr}
\DeclareMathOperator{\Ext}{Ext}
\DeclareMathOperator{\Hom}{Hom}
\DeclareMathOperator{\sExt}{\mathscr{E}\mathit{xt}}
\DeclareMathOperator{\sHom}{\mathscr{H}\mathit{om}}

\DeclareMathOperator{\Der}{Der}

\DeclareMathOperator{\Sp}{Sp}
\DeclareMathOperator{\Spf}{Spf}
\DeclareMathOperator{\Spec}{Spec}
\DeclareMathOperator{\Sym}{Sym}

\DeclareMathOperator{\rig}{rig}

\DeclareMathOperator{\Loc}{Loc}

\DeclareMathOperator{\id}{id}

\DeclareMathOperator{\Ann}{Ann}
\begin{document}
\newtheorem{MainThm}{Theorem}
\renewcommand{\theMainThm}{\Alph{MainThm}}
\theoremstyle{definition}
\newtheorem*{examps}{Examples}
\newtheorem*{defn}{Definition}
\theoremstyle{plain}
\newtheorem*{lem}{Lemma}
\newtheorem*{prop}{Proposition}
\newtheorem*{rmk}{Remark}
\newtheorem*{rmks}{Remarks}
\newtheorem*{thm}{Theorem}
\newtheorem*{example}{Example}
\newtheorem*{examples}{Examples}
\newtheorem*{cor}{Corollary}
\newtheorem*{conj}{Conjecture}
\newtheorem*{hyp}{Hypothesis}
\newtheorem*{thrm}{Theorem}
\newtheorem*{quest}{Question}
\theoremstyle{remark}

\newcommand{\Zp}{{\mathbb{Z}_p}}
\newcommand{\Qp}{{\mathbb{Q}_p}}
\newcommand{\Fp}{{\mathbb{F}_p}}
\newcommand{\A}{\mathcal{A}}
\newcommand{\B}{\mathcal{B}}
\newcommand{\C}{\mathcal{C}}
\newcommand{\D}{\mathcal{D}}
\newcommand{\E}{\mathcal{E}}
\newcommand{\F}{\mathcal{F}}
\newcommand{\G}{\mathcal{G}}
\newcommand{\I}{\mathcal{I}}
\newcommand{\J}{\mathcal{J}}
\renewcommand{\L}{\mathcal{L}}
\newcommand{\sL}{\mathscr{L}}
\newcommand{\M}{\mathcal{M}}
\newcommand{\sM}{\mathscr{M}}
\newcommand{\N}{\mathcal{N}}
\newcommand{\sN}{\mathscr{N}}
\renewcommand{\O}{\mathcal{O}}
\newcommand{\cP}{\mathcal{P}}
\newcommand{\Q}{\mathcal{Q}}
\newcommand{\R}{\mathcal{R}}
\newcommand{\cS}{\mathcal{S}}
\newcommand{\T}{\mathcal{T}}
\newcommand{\U}{\mathcal{U}}
\newcommand{\sU}{\mathscr{U}}
\newcommand{\sV}{\mathscr{V}}
\newcommand{\V}{\mathcal{V}}
\newcommand{\W}{\mathcal{W}}
\newcommand{\X}{\mathcal{X}}
\newcommand{\Y}{\mathcal{Y}}
\newcommand{\Z}{\mathcal{Z}}
\newcommand{\PFS}{\mathbf{PreFS}}
\newcommand{\h}[1]{\widehat{#1}}
\newcommand{\hA}{\h{A}}
\newcommand{\hK}[1]{\h{#1_K}}
\newcommand{\hsULK}{\hK{\sU(\L)}}
\newcommand{\hsUFK}{\hK{\sU(\F)}}
\newcommand{\hsULnK}{\hK{\sU(\pi^n\L)}}
\newcommand{\hn}[1]{\h{#1_n}}
\newcommand{\hnK}[1]{\h{#1_{n,K}}}
\newcommand{\w}[1]{\wideparen{#1}}
\newcommand{\wK}[1]{\wideparen{#1_K}}
\newcommand{\invlim}{\varprojlim}
\newcommand{\dirlim}{\varinjlim}
\newcommand{\fr}[1]{\mathfrak{{#1}}}
\newcommand{\LRU}[1]{\sU(\mathscr{#1})}
\newcommand{\et}{\acute et}

\newcommand{\ts}[1]{\texorpdfstring{$#1$}{}}
\newcommand{\st}{\mid}
\newcommand{\be}{\begin{enumerate}[{(}a{)}]}
\newcommand{\ee}{\end{enumerate}}
\newcommand{\qmb}[1]{\quad\mbox{#1}\quad}
\let\le=\leqslant  \let\leq=\leqslant
\let\ge=\geqslant  \let\geq=\geqslant

\title[$\w{\D}$-modules on rigid analytic spaces III]{$\w\D$-modules on rigid analytic spaces III: \\Weak holonomicity and operations}

\author{Konstantin Ardakov}
\address{Mathematical Institute\\University of Oxford\\Oxford OX2 6GG}
\author{Andreas Bode}
\author{Simon Wadsley}
\address{Homerton College, Cambridge, CB2 8PQ}

\maketitle
\begin{abstract}
We develop a dimension theory for coadmissible $\w{\D}$-modules on rigid analytic spaces and study those which are of minimal dimension, in analogy to the theory of holonomic $\D$-modules in the algebraic setting. We discuss a number of pathologies contained in this subcategory (modules of infinite length, infinte-dimensional fibres). \\
We prove stability results for closed immersions and the duality functor, and show that all higher direct images of integrable connections restricted to a Zariski open subspace are coadmissible of minimal dimension. It follows that the local cohomology sheaves $\underline{H}^i_Z(\M)$ with support in a closed analytic subset $Z$ of $X$ are also coadmissible of minimal dimension for any integrable connection $\M$ on $X$.
\end{abstract}
\section{Introduction}
Let $K$ be a complete discrete valuation field of characteristic zero with valuation ring $R$ and uniformiser $\pi\in R$. We allow both the case of mixed characteristic (e.g. finite field extensions of the $p$-adic numbers $\mathbb{Q}_p$) and equal characteristic (e.g. $\mathbb{C}((t))$).\\
In \cite{DhatOne}, the first and the third author introduced the sheaf $\w{\D}_X$ of analytic (infinite order) differential operators on a smooth rigid analytic $K$-space $X$. It was shown in \cite{DhatOne} and in greater generality in \cite{Bode} that sections over affinoids are \emph{Fr\'echet--Stein algebras} as defined by Schneider--Teitelbaum \cite{ST}, which suggests the notion of \emph{coadmissibility} as the natural analogue of coherence in this setting. We denote the category of coadmissible $\w{\D}_X$-modules by $\C_X$.\\
\\
In the classical theory of $\D$-modules (on a smooth complex algebraic variety $X$, say), one is often particularly interested in those modules which are \emph{holonomic}. There are various equivalent ways to define these, one of which is the following: one can introduce a dimension function $d$ for coherent $\D$-modules, either as the dimension of the support of the associated characteristic variety, or in terms of homological algebra by interpreting the homological grade of a module as its codimension. One then shows that any non-zero coherent $\D_X$-module $\M$ satisfies
\begin{equation*}
\dim X\leq d(\M)\leq 2\dim X,
\end{equation*}
which is known as \emph{Bernstein's inequality}. A coherent $\D_X$-module $\M$ is said to be holonomic if $d(\M)\leq \dim X$, i.e. $\M$ is either zero or of minimal dimension.\\
\\
Equivalently, holonomic $\D_X$-modules can be characterized as those coherent $\D_X$-modules $\M$ satisfying either of the following equivalent properties:\\
\\
 $(*)$ for every point $i: x\to X$ and any $j\in \mathbb{Z}$, the cohomology $\mathrm{H}^j(i^+\M)$ is a finite-dimensional vector space over $\mathbb{C}$, where $i^+$ denotes the derived inverse image of $\D$-modules (see \cite[Theorem 3.3.1]{HTT}).\\
\\
$(**)$ for any smooth morphism $f: X'\to X$ and any divisor $Z$ of $X'$, the local cohomology sheaves $\underline{H}^i_Z(f^*\M)$ are coherent $\D_{X'}$-modules for any $i\geq 0$ (see property $(**)$ in the introduction of \cite{Caro04}).\\
\\
The category of holonomic $\D_X$-modules plays a crucial role in many parts of algebraic geometry, algebraic analysis and geometric representation theory. It contains all integrable connections on $X$, and each holonomic module has finite length. Moreover, the notion of holonomicity is stable under pullback, pushforward, tensor product and the duality functor, which takes the form
\begin{equation*}
\mathbb{D}: \M\mapsto \sExt^{\dim X}_{\D_X}(\M, \D_X)\otimes_{\O_X}\Omega^{\otimes -1}
\end{equation*} 
for any holonomic $\D_X$-module $\M$.\\
Moreover, the Riemann--Hilbert correspondence asserts an equivalence of categories between holonomic modules with regular singularities and the category of perverse sheaves.\\
\\
In this paper, we begin the study of a subcategory of $\C_X$ analogous to the category of holonomic $\D$-modules. While there is currently no satisfactory theory of characteristic varieties for coadmissible $\w{\D}$-modules, we can adopt the homological viewpoint by slightly generalising the dimension theory for Fr\'echet--Stein algebras given in \cite{ST}, where results were given for Fr\'echet--Stein algebras defined by Banach algebras which are Auslander regular with universally bounded global dimension. We relax this condition by allowing Banach algebras which are Auslander--Gorenstein with universally bounded self-injective dimension. This allows us to define the dimension of a coadmissible $\w{\D}_X$-module. We then prove the corresponding Bernstein inequality.
\begin{MainThm}\label{introthmA}
Let $X$ be a smooth affinoid $K$-space such that $\T(X)$ is a free $\O(X)$-module. 
\begin{enumerate}[(i)]
\item There is a Fr\'echet--Stein structure $\w{\D}(X)\cong \varprojlim A_n$, where each $A_n$ is Auslander--Gorenstein with self-injective dimension bounded by $2\dim X$.
\item If $M$ is a non-zero coadmissible $\w{\D}(X)$-module then 
\begin{equation*}
d(M)\geq \dim X,
\end{equation*}
where $d(M)=2\dim X-j(M)$ for $j(M)$ the homological grade of $M$.
\end{enumerate}
\end{MainThm}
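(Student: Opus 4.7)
For part (i), the plan is to take the Fr\'echet--Stein presentation $\w{\D}(X) = \invlim_n \hsULnK$ already constructed in \cite{DhatOne, Bode}, where $\L$ is a smooth $R$-Lie lattice in $\T(X)$, free over $\O(X)$ of rank $d := \dim X$ thanks to the hypothesis. Setting $A_n := \hsULnK$, the substantive task is to show that each $A_n$ is Auslander--Gorenstein with self-injective dimension at most $2d$. I would filter the integral form $\h{\sU(\pi^n\L)}$ using both the $\pi$-adic filtration and the PBW filtration: the doubly associated graded identifies, up to the central graded uniformiser, with a polynomial ring in $d$ variables over the reduction of $\O(X)$ modulo $\pi$, which is a commutative Noetherian ring of Krull dimension $2d$. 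Such a ring is Gorenstein of self-injective dimension $\le 2d$, so the standard Bj\"ork-style lifting theorems for filtered rings transfer the Auslander--Gorenstein property and the dimension bound first to $\h{\sU(\pi^n\L)}$ and then, after inverting $\pi$, to $A_n$.

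For part (ii), the strategy is to reduce Bernstein's inequality for coadmissible $\w{\D}(X)$-modules to the corresponding statement at each finite level. The mildly extended Schneider--Teitelbaum dimension theory developed earlier in the paper should give, for a non-zero coadmissible $M$ with layers $M_n := A_n \h\otimes_{\w{\D}(X)} M$, a grade formula of the form $j(M) = \min_n j_{A_n}(M_n)$. Since $M \ne 0$ forces $M_n \ne 0$ for $n \gg 0$, it suffices to prove that every non-zero finitely generated $A_n$-module $M_n$ satisfies $j_{A_n}(M_n) \le d$, equivalently $d(M_n) \ge d$. I would establish this by equipping $M_n$ with a good filtration, passing to $\gr M_n$ as a finitely generated module over $\gr A_n$, and bounding below the Krull dimension of its support. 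The crucial input is the non-commutativity $[\xi, f] = \xi(f)$ between $\L$ and $\O(X)$ inside $A_n$, which forces the support of $\gr M_n$ to project with at least $d$-dimensional image onto the affinoid base, in analogy with the classical characteristic variety / involutivity argument.

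I expect two main obstacles. First, in (i), the filtration bookkeeping: one must arrange that the associated graded is a commutative Noetherian ring of Krull dimension $\le 2d$ compatibly with the Banach topology, and verify that the lifting results still apply when one only has Auslander--Gorenstein in place of the Auslander regularity assumed in the classical references. Second, in (ii), the reduction step relies on a Fr\'echet--Stein grade formula that in \cite{ST} is proved under Auslander regularity; the principal work is to confirm that the Schneider--Teitelbaum arguments carry over under the weaker hypothesis of Auslander--Gorenstein with universally bounded self-injective dimension, as envisaged in the introduction. Once that formalism is in place and the Bernstein inequality is proved at each $A_n$, the global inequality $d(M) \ge \dim X$ follows.
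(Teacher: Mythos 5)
Your proposal for (i) has a genuine gap at the central step. You reduce to showing that the doubly graded ring, which is a polynomial ring over the reduction $\A/\pi\A$ of an affine formal model $\A$, is Gorenstein. But for an \emph{arbitrary} affine formal model $\A$ of a smooth affinoid algebra $A$, the reduction $\A/\pi\A$ need not be Gorenstein (nor even Cohen--Macaulay, nor reduced); a commutative Noetherian ring of finite Krull dimension is in general nowhere near Gorenstein, so the assertion ``such a ring is Gorenstein of self-injective dimension $\leq 2d$'' does not follow. This is precisely the obstacle the paper's Proposition \ref{GorensteinFormalModel} exists to circumvent: invoking Raynaud's equivalence between quasi-compact admissible formal $R$-schemes up to admissible blowing-up and quasi-compact rigid spaces, together with Hartl's theorem that after a finite separable extension there is a strictly semi-stable (hence regular, hence Gorenstein) model dominating $\Spf\A$ faithfully flatly, the paper produces an admissible formal blow-up $\X\to\Spf\A$ with $\X$ Gorenstein. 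It then covers $\X$ by affine opens, notes that for $n\gg 0$ the map $\hK{U(\pi^n\L)}\to\bigoplus\hK{U(\A_i\otimes\pi^n\L)}$ is faithfully flat, proves the Auslander--Gorenstein property (via Bj\"ork lifting, exactly as you sketch) over each Gorenstein $\A_i$, and finally descends via the faithfully flat descent Lemma \ref{AGfflat}. Without some such resolution-and-descent step your argument cannot close, and the restriction to ``$n\gg 0$'' (the integer $m$ in the paper's Theorem) enters precisely here, through the accessibility of the blow-up cover.

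For (ii), your reduction to the finite level via the grade formula is the same as the paper's first move (Lemma \ref{ReviewDim}, whose extension from Auslander regular to Auslander--Gorenstein is indeed routine). From there, however, you diverge: you propose a direct characteristic-variety/involutivity argument over the general smooth affinoid $X$, whereas the paper first proves Bernstein's inequality only on the polydisc (where $\gr A_n$ is an honest polynomial ring over $R/\pi$ and the result is available from \cite[Corollary 7.4, Theorem 3.3]{AW13}), and then transports it to an arbitrary smooth affinoid by closed embedding into a polydisc, using the additivity of dimension under the Kashiwara pushforward $\iota_+$ established in \textsection\ref{Kashiwaradim}. Your route would require making the coisotropicity/involutivity bound precise over the possibly very singular base $\A/\pi\A$; the brief heuristic you give (about the support projecting with $d$-dimensional image onto the base) does not by itself yield a lower bound on $\dim\Supp\gr M_n$ inside the $2d$-dimensional space $\Spec\gr A_n$, and it is not clear the argument survives a pathological reduction. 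The closed-embedding route sidesteps all of this, at the cost of establishing the pushforward dimension formula; you would need to either supply a careful involutivity argument valid over arbitrary $\A/\pi\A$, or adopt the paper's reduction.
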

Concerning (i), it is worth pointing out that our computation of Ext groups shows that the $A_n$ have in fact self-injective dimension exactly $\dim X$ for $n$ sufficiently large, but we won't use this fact.\\ 
We also note that Mebkhout--Narvaez-Macarro have already discussed dimensions of modules over the sheaf $\D$ of \emph{algebraic} (i.e. finite order) differential operators on a rigid analytic space in \cite{Mebkhout}, and we show that the two theories are compatible in the obvious way. This rests on the following theorem.
\begin{MainThm}
\label{introthmC}
Let $X$ be a smooth affinoid $K$-space. Then $\w{\D}(X)$ is a faithfully flat $\D(X)$-module.
\end{MainThm}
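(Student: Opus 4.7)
We first reduce to the case where $X$ is affinoid with $\T(X)$ free over $\O(X)$, by passing to an admissible cover and using standard local-to-global patching; then Theorem~\ref{introthmA} affords a Fr\'echet--Stein presentation $\w{\D}(X) \cong \varprojlim_n D_n$ with $D_n := \hK{\sU(\pi^n\L)}$ for a suitable Lie lattice $\L$, and we write $A_n := \sU(\pi^n\L)$ so that $(A_n)_K = \D(X)$ for every $n$.

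For flatness, the Noetherian ring $A_n$ has flat $\pi$-adic completion $A_n \hookrightarrow \h{A_n}$, so $\D(X) \to D_n$ is flat. For a finitely presented left $\D(X)$-module $M$, the system $M_n := D_n \otimes_{\D(X)} M$ is coadmissible over $\w{\D}(X)$ by associativity of tensor product; presenting $M$ via $\D(X)^r \to \D(X)^s \to M \to 0$ and invoking exactness of $\varprojlim$ on coadmissible systems identifies $\w{\D}(X) \otimes_{\D(X)} M$ with $\varprojlim_n M_n$. Applying this to a short exact sequence of finitely presented modules preserves exactness by flatness of each $D_n$ and exactness of $\varprojlim$ on coadmissible systems; since $\Tor_1$ commutes with filtered colimits, flatness extends to all $\D(X)$-modules.

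For faithful flatness, by Noetherianity of $\D(X)$ it suffices to verify $\w{\D}(X)\fr{m} \neq \w{\D}(X)$ for every maximal left ideal $\fr{m} \subset \D(X)$. Since $\w{\D}(X)/\w{\D}(X)\fr{m} \cong \varprojlim_n D_n/D_n\fr{m}$ surjects onto each level, it is enough to exhibit some $n$ with $D_n\fr{m} \neq D_n$. Setting $J := \fr{m} \cap A_n$ (which is pure in $A_n$) and using flatness of completion gives $D_n/D_n\fr{m} \cong ((A_n/J)^\wedge)_K$, which is nonzero exactly when $A_n/J$ is not $\pi$-divisible, equivalently $1 \notin J + \pi A_n$. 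For $n \geq 1$ both the Lie bracket and the anchor on $\pi^n\L$ vanish modulo $\pi$, so $A_n/\pi A_n \cong (\O(X)/\pi)[T_1,\dots,T_d]$ is a commutative polynomial algebra, and the condition $1 \in J + \pi A_n$ unwinds to the requirement that the class of $\pi^{-1}$ in $\D(X)/\fr{m}$ lie in the image of the natural map $A_n \to \D(X)/\fr{m}$.

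The principal technical hurdle is this last step: producing, for each $\fr{m}$, an $n$ such that $\pi^{-1}$ is not in the image of $A_n \to \D(X)/\fr{m}$. A natural approach is to equip the simple module $\D(X)/\fr{m}$ with a $\pi$-adic pseudovaluation compatible with the order filtration on $\D(X)$; since elements of $A_n$ of positive order have $\pi$-adic valuation at least (approximately) $n$ times that order, their images in $\D(X)/\fr{m}$ cannot represent $\pi^{-1}$ once $n$ is large. Making this precise, and verifying the pseudovaluation is nontrivial on the simple module, is the technical heart of the faithful flatness claim.
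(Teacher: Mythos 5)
Your flatness argument is essentially the paper's (which cites \cite[Lemma 4.14]{Bode}), and your reduction of faithfulness to the statement that $D_n\fr{m}\neq D_n$ for some $n$ --- equivalently that the $\pi$-adic completion of $A_n/(\fr{m}\cap A_n)$ is nonzero --- is sound and runs closely parallel to the paper's reduction. However, the step you flag as the ``technical heart'' is indeed a genuine gap, and the pseudovaluation idea you sketch does not obviously close it: the simple module $\D(X)/\fr{m}$ does not carry an evident $\pi$-adic pseudovaluation compatible with the image of $A_n$, and the informal statement ``elements of $A_n$ of positive order have $\pi$-adic valuation at least $n$ times that order'' says nothing directly about their images in a quotient where massive cancellation can and does occur. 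Making this precise is the whole content of the faithfulness claim, and your proposal leaves it open.

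The paper (Theorem~\ref{fflat} and the lemmas of \S\ref{fflat}--\S\ref{tfdef}) fills this gap by a different device. Starting from a simple $\D(X)$-module $N$, it picks a finitely generated $U$-lattice $M\subset N$ with a good filtration $F_\bullet M$, forms $M_n=\sum_i\pi^{in}F_iM$ (a finitely generated filtered $U_n$-module), and proves a structural lemma relating $\gr M$ and $\gr M_n$ via the map $\mu_n$ that multiplies degree-$j$ pieces by $\pi^{nj}$: $\ker(\mu_n|_{\gr_jM})=(\gr_jM)[\pi^{nj}]$. Since $\gr M$ is finitely generated over the Noetherian ring $\gr U$, its $\pi$-torsion is bounded, so $\gr M_n$ is $\pi$-torsionfree for $n\gg0$. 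This is then fed into an Ore-localization argument: by \cite[Corollary 2.2]{Li}, $S_n=1+\pi U_n$ is an Ore set, and the $\pi$-torsionfreeness of $\gr M_n$ forces $F_jM_n=\pi F_jM_n$ to fail for some $j$ if $N$ were $S_n$-torsion (each $F_jM_n$ is a finitely generated module over the $\pi$-adically complete Noetherian ring $F_0U$, so $F_jM_n=\pi F_jM_n$ would give $F_jM_n=0$ by Nakayama). Hence $N$ is $S_n$-torsionfree, and $\h{M_n}\neq 0$ by \cite[Theorem 10.17]{AM}, which is exactly the nonvanishing you need. If you want to complete your own route, you would have to supply an argument of comparable force --- most plausibly by reintroducing a good filtration on a lattice and exploiting boundedness of $\pi$-torsion in $\gr M$, at which point you have essentially rederived the paper's proof.
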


We call a coadmissible $\w{\D}$-module $\M$ on $X$ which satisfies $d(\M)\leq \dim X$ \emph{weakly holonomic}. \\
This choice of nomenclature reflects the fact that the category of weakly holonomic modules still contains some pathologies which do not appear in the algebraic theory: we present examples of weakly holonomic modules which are not of finite length and have infinite-dimensional fibres. In particular, the natural analogues of $(*)$ and $(**)$ do not provide equivalent characterizations of weak holonomicity.\\
In \cite{Bitoun}, Bitoun and the second author already gave an example of an integrable connection on the punctured unit disc such that its direct image on the disc is not even coadmissible, so weakly holonomic $\w{\D}$-modules are not stable under pushforward either. \\
\\
Nonetheless, we also produce some positive results. One can define a duality functor as in the classical setting and show that this gives an involution of the category of weakly holonomic modules. We also show that the $\w{\D}$-module analogue of Kashiwara's equivalence given in \cite{DhatTwo} respects weak holonomicity. Concerning the question of pushforwards along open embeddings, we prove the following.
\begin{MainThm}
\label{introthmB}
Let $j: U\to X$ be a Zariski open embedding of smooth rigid analytic $K$-spaces and let $\M$ be an integrable connection on $X$. Then $\mathrm{R}^ij_*(\M|_U)$ is a coadmissible, weakly holonomic $\w{\D}_X$-module for any $i\geq 0$.
\end{MainThm}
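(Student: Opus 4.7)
Since the statement is local on $X$, I would first reduce to the case where $X$ is a smooth affinoid $K$-space with free tangent module $\T(X)$, so that the Fr\'echet--Stein structure of Theorem~\ref{introthmA} is available. Choose $f_1,\dots,f_r \in \O(X)$ cutting out $Z = X\setminus U$; then $U$ is covered by the Weierstrass subdomains $X(f_k^{-1})$, all of whose finite intersections are again of this type. Because an integrable connection is $\O$-coherent, Tate's acyclicity theorem implies that the \v{C}ech complex for this cover computes $\mathrm{R}^i j_*(\M|_U)$. In particular, $\mathrm{R}^i j_*(\M|_U)(X)$ appears as the cohomology of an explicit bounded complex of $\w{\D}(X)$-modules whose terms have the form $\M(W)$ with $W = X(g^{-1})$ a Weierstrass subdomain of $X$ defined by a product $g$ of the $f_k$, and the argument is uniform on affinoid opens so it assembles to a sheaf-level statement.

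By Theorem~\ref{introthmA} and the standard behaviour of grade in Auslander--Gorenstein rings, the full subcategory of coadmissible weakly holonomic $\w{\D}(X)$-modules is closed under kernels, cokernels and extensions. It is therefore enough to show that each such $\M(W)$ is coadmissible and weakly holonomic as a $\w{\D}(X)$-module, which reduces Theorem~B to the single-function case: for $g \in \O(X)$, establish that $\M(X(g^{-1}))$ is coadmissible weakly holonomic over $\w{\D}(X)$.

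To handle this case, I would pass to the algebraic theory of Mebkhout--Narv\'aez-Macarro. The algebraic localization $\M(X)[g^{-1}]$ is a holonomic $\D(X)$-module, exhibited by a Bernstein--Sato $b$-function for $g$ along $\M$. Theorem~\ref{introthmC} (faithful flatness of $\w{\D}(X)$ over $\D(X)$), combined with the compatibility between the algebraic and analytic dimension theories announced in the introduction, ensures that extension of scalars from $\D(X)$ to $\w{\D}(X)$ preserves the minimal-dimension property. It remains to identify the resulting $\w{\D}(X)$-module with the analytic sections $\M(X(g^{-1})) = \M(X) \otimes_{\O(X)} \O(X(g^{-1}))$ and to upgrade the conclusion from finite generation to coadmissibility.

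The main obstacle is precisely this identification and upgrade: showing that $\M(X(g^{-1}))$, with its inherited $\w{\D}(X)$-action, is not merely finitely generated but coadmissible, i.e.\ a compatible inverse limit of finitely generated modules over each Banach step $A_n$ of the Fr\'echet--Stein tower from Theorem~\ref{introthmA}(i). This is where the $b$-function plays a quantitative role, controlling the growth of $g^{-n}$ under infinite-order differential operators and guaranteeing that the analytic completion assembles coherently across the tower and matches $\M(X(g^{-1}))$ level by level. Once this identification and coadmissibility are in place, the reductions via the \v{C}ech complex deliver Theorem~B.
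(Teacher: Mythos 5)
Your reduction steps are sound and match the paper: reduce to $X$ affinoid with free tangent sheaf, express $\mathrm{R}^ij_*(\M|_U)$ as \v{C}ech cohomology for the cover by the $X_{f_k}$, use closure of $\C_X^{\mathrm{wh}}$ under kernels and cokernels (Proposition~\ref{weakhol}) to reduce to a single $g$, and for the dimension bound pass through Mebkhout--Narvaez-Macarro's minimal-dimension result combined with the faithful flatness of Theorem~\ref{introthmC} (this is precisely how the paper proves Lemma~\ref{pushfweakhol}). But there is a genuine gap, and it is exactly the step you flag as ``the main obstacle'': you offer no actual mechanism by which the $b$-function controls coadmissibility of $\M(X_g)$ for a \emph{general} $g$. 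The relevant input (Theorem~\ref{recallBitoun}.(i), from \cite{Bitoun}) requires that all roots of the $b$-functions of generators be of \emph{positive type} in the sense of Kedlaya; for an arbitrary $g$ and an arbitrary (possibly irregular) integrable connection $\M$ on $X$, there is no reason for the roots to be integral, rational, or even non-Liouville in the $p$-adic sense, and the paper's Theorem~\ref{recallBitoun}.(ii) shows concretely how badly things can go when the type condition fails. Asserting that the $b$-function ``controls the growth of $g^{-n}$ under infinite-order differential operators'' does not address this.

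The paper's route around this difficulty is what your proposal omits entirely. In Section~\ref{snc} the $b$-function argument is carried out only when $Z$ is an \emph{algebraic snc divisor}, where an explicit computation with the coordinate derivations $\partial_i^{[j]}$ shows the $b$-function of any $m\in\M(X)$ divides $\binom{s}{n}^r$, hence has integer roots, which are always of positive type. The general case is then handled in Section~\ref{gencase}: one applies Temkin's embedded resolution of singularities to produce $\rho: X'\to X$, factors $\rho$ through a closed immersion $\iota: X'\to \mathbb{P}^{n,\mathrm{an}}\times X$ followed by the projection, and descends coadmissibility from the snc case on $X'$ using the closed-immersion pushforward $\iota_+$ from \cite{DhatTwo} together with the projective pushforward theorem of \cite{Bodeproper}, plus the vanishing result Corollary~\ref{weaklyStein} to identify $\mathrm{R}^i\mathrm{pr}_*$ of the transferred module with $\mathrm{R}^i j_*\M$. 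Your proposal has no substitute for this resolution-and-descent machinery, so the coadmissibility claim for a single $g$ remains unproven.

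(A smaller point: what you invoke is Kiehl's acyclicity for coherent sheaves on the quasi-Stein spaces $X_{f_{k_1}\cdots f_{k_m}}$ rather than Tate's theorem, since these intersections are not affinoid; the paper in fact needs the stronger Proposition~\ref{weaklyStein} because in the descent from $X'$ the relevant modules are no longer $\O$-coherent.)
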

By `Zariski open embedding', we mean that $U$ is an admissible open subspace of $X$ whose complement is a closed analytic subset of $X$.\\
The proof of Theorem \ref{introthmB} relies on the rigid analytic analogue of Hironaka's resolution of embedded singularities as developed by Temkin \cite{Temkin}.\\
As a corollary, we obtain that the local cohomology sheaves $\underline{H}^i_Z(\M)$ are also coadmissible, weakly holonomic $\w{\D}_X$-modules, where $Z$ is any closed analytic subset of $X$. \\
In this way, we verify that any integrable connection $\M$ on $X$ satisfies the following natural analogue of $(**)$:\\
\\
$(**')$ for any smooth morphism $f: X'\to X$ and any divisor $Z$ of $X'$, the local cohomology sheaves $\underline{H}^i_Z(f^*\M)$ are coadmissible for any $i\geq 0$.\\
\\
We mention at this point that Caro has taken property $(**)$ as the point of departure for his development of the study of \emph{overcoherent} arithmetic $\mathscr{D}$-modules \cite{Caro04}. It would be very interesting to investigate whether $(**')$ (maybe together with some analogue of Caro's \emph{overholonomicity} condition \cite{Caro09}) yields a sufficiently rich subcategory of weakly holonomic $\w{\D}$-modules which has better finiteness and stability properties.\\
\\
We hope that these results bring us closer to the formulation of a theoretical framework which allows for a $p$-adic Riemann--Hilbert correspondence for $\w{\D}_X$-modules, generalizing results by Liu-Zhu \cite{LZ} for (a suitable category of) integrable connections and (de Rham) local systems. 
\subsection*{Structure of the paper}
In section 2, we recall some of the results and terminology from \cite{DhatOne}.\\
\\
In section 3, we prove Theorem \ref{introthmC}.\\
\\
In section 4, we show that the sections $\w{\D}(X)$ over a smooth affinoid $X$ are of the form as claimed in Theorem \ref{introthmA}.(i). In section 5, we slightly generalise the dimension theory from \cite{ST} to algebras of this form.\\
\\
In section 6, we prove Bernstein's inequality, Theorem \ref{introthmA}.(ii).\\
\\
In section 7, we show some basic properties of the category of weakly holonomic $\w{\D}$-modules. Amongst other things, we prove that every integrable connection is weakly holonomic, and discusss the duality functor.\\
\\
In section 8, we present examples which have no analogue in the classical theory: there exist weakly holonomic $\w{\D}$-modules which do not have finite length and have fibres of infinite dimension. We also briefly recall the results from \cite{Bitoun}, which is concerned with the extension of meromorphic connections to coadmissible $\w{\D}$-modules. Crucially, it also gives an example of an integrable connection whose direct image is not coadmissible.\\
\\
In sections 9 and 10, we prove Theorem \ref{introthmB} by first considering the case where the complement of $U$ is the analytification of a strict normal crossing divisor and then reducing to that case by invoking Temkin's resolution of embedded singularities \cite{Temkin} and the results in \cite{Bodeproper}.

\subsection*{Convention} All our rigid analytic spaces will be quasi-separated.\\
Throughout, smooth rigid analytic spaces will be assumed to be equidimensional for simplicity. Arguing on each connected component separately, analogues of all our results can be formulated for arbitrary smooth spaces in a straightforward manner.
\subsection*{Notation} Given an $R$-module $M$, we denote by $\h{M}$ its $\pi$-adic completion and abbreviate $\h{M}\otimes_R K$ to $\hK{M}$.\\
\\
The first and the second author acknowledge support from the EPSRC grant EP/L005190/1.

\section{Basic theory of $\w{\D}$-modules}
We recall some definitions and results from \cite{DhatOne}.

\subsection{Fr\'echet completed enveloping algebras}
Let $k$ be a commutative base ring and $A$ a commutative $k$-algebra. \\
A \emph{Lie--Rinehart algebra} (or $(k, A)$-Lie algebra) is an $A$-module $L$ equipped with a $k$-bilinear Lie bracket and an \emph{anchor map}
\begin{equation*}
\rho: L\to \Der_k(A)
\end{equation*}
such that $[x, ay]=a[x, y]+\rho(x)(a)y$ for any $a\in A$, $x, y\in L$.\\
\\
We say that $L$ is \emph{smooth} if it is coherent and projective as an $A$-module.\\
\\
For any $(k, A)$-Lie algebra $L$, one can form the enveloping algebra $U_A(L)$ as in \cite[\textsection 2]{Rinehart}. This is an associative $k$-algebra with the property that to give a $U_A(L)$-module structure on an $A$-module $M$ is equivalent to giving a Lie algebra action of $L$ on $M$ such that 
\begin{equation*}
x\cdot (a\cdot m)=(ax)\cdot m+\rho(x)(a)\cdot m \ \text{and}\  a\cdot (x\cdot m)=(ax)\cdot m
\end{equation*}
for any $a\in A$, $x\in L$, $m\in M$. \\
\\
If $A$ is an affinoid $K$-algebra, we say that an $R$-subalgebra $\A\subset A$ is an \emph{affine formal model} if $\A\otimes_R K=A$ and $\A$ is a topologically finitely presented $R$-algebra.\\
\\
A finitely generated $\A$-submodule $\L$ of a $(K, A)$-Lie algebra $L$ is called a \emph{Lie lattice} if $\L\otimes_R K=L$, $\L$ is closed under the Lie bracket and $\rho(x)(a)\in \A$ for any $x\in \L$, $a\in \A$. We note that if $\L$ is a Lie lattice, so is $\pi^n\L$ for any $n\geq 0$.\\
\\
We say that $L$ admits a smooth Lie lattice if there exists an affine formal model $\A$ such that $L$ contains a smooth $\A$-Lie lattice $\L$.
\begin{defn}[{\cite[\textsection 6.2]{DhatOne}}]
Let $A$ be an affinoid $K$-algebra with affine formal model $\A$, and let $L$ be a coherent $(K, A)$-Lie algebra with $\A$-Lie lattice $\L$. The Fr\'echet completed enveloping algebra $\w{U_A(L)}$ is defined to be
\begin{equation*}
\w{U_A(L)}=\varprojlim_n \hK{U_{\A}(\pi^n\L)}.
\end{equation*}
\end{defn}
It was shown in \cite[\textsection 6.2]{DhatOne} that this does not depend on the choice of affine formal model and Lie lattice.\\
The key property of $\w{U(L)}$ is that it is a Fr\'echet--Stein algebra in the sense of \cite{ST} whenever $L$ is smooth.
\begin{defn}[{\cite[\textsection 3]{ST}}]
A $K$-Fr\'echet algebra $U$ is called (left, right, two-sided) \emph{Fr\'echet--Stein} if it is isomorphic to an inverse limit $\varprojlim U_n$ for (left, right, two-sided) Noetherian $K$-Banach algebras $U_n$ whose connecting maps are flat (on the right, on the left, on both sides) with dense images.\\
A left $U$-module $M$ is called \emph{coadmissible} if $M\cong \varprojlim M_n$, where $M_n$ is a finitely generated $U_n$-module such that the natural morphism $U_n\otimes_{U_{n+1}}M_{n+1}\to M_n$ is an isomorphism for each $n$.
\end{defn}
For a given Fr\'echet--Stein algebra $U$, we denote the category of coadmissible left $U$-modules by $\C_U$.
\begin{thm}[{\cite[Theorem 3.5]{Bode}}]
Let $A$ be an affinoid $K$-algebra and let $L$ be a smooth $(K, A)$-Lie algebra. Then $\w{U_A(L)}$ is a two-sided Fr\'echet--Stein algebra.
\end{thm}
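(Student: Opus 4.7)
The plan is to verify the three defining properties of a Fr\'echet--Stein structure for the presentation $\w{U_A(L)} = \varprojlim_n \hK{U_\A(\pi^n \L)}$, once we have chosen an affine formal model $\A$ of $A$ and a smooth $\A$-Lie lattice $\L$ inside $L$ (whose existence is guaranteed by the smoothness assumption on $L$ after possibly shrinking $\A$). Write $\L_n := \pi^n \L$ and $A_n := \hK{U_\A(\L_n)}$; the task is to show (i) that each $A_n$ is a two-sided Noetherian $K$-Banach algebra, (ii) that the connecting morphism $A_{n+1}\to A_n$ has dense image, and (iii) that it is flat on both sides.

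For Noetherianity, I would exploit Rinehart's PBW theorem: because $\L$ is finitely generated projective over $\A$, the standard enveloping filtration on $U_\A(\L_n)$ has associated graded equal to the symmetric algebra $\Sym_\A(\L_n)$. Reducing modulo $\pi$ this becomes $\Sym_{\A/\pi}(\L_n/\pi\L_n)$, a finitely generated commutative algebra over the Noetherian ring $\A/\pi$, and hence Noetherian. The standard lifting lemma for complete filtered rings (a filtered ring whose associated graded is Noetherian is itself Noetherian, once the filtration is Zariskian/complete) then gives Noetherianity of $\h{U_\A(\L_n)}$, and inverting $\pi$ preserves this; the symmetric PBW basis furnishes the same conclusion on the right, yielding the two-sided Noetherian Banach algebra $A_n$. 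Density of the transition map is immediate, because $U_A(L)$ sits inside every $A_n$ as a dense subring and maps compatibly via the identity to $U_A(L) \subset A_n$.

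The main obstacle is flatness of the transition $A_{n+1}\to A_n$. The strategy is to pass to associated graded objects with respect to a suitable filtration (e.g.\ a bigraded filtration combining the $\pi$-adic and enveloping degrees, akin to a Rees construction), so that the map becomes identified with the natural map between completed symmetric algebras $\widehat{\Sym}_{\A/\pi}(\L_{n+1}/\pi\L_{n+1}) \to \widehat{\Sym}_{\A/\pi}(\L_n/\pi \L_n)$, where the action of the second on the first is via a rescaling by powers of $\pi$. This is manifestly flat on the graded level (it is essentially a base change inside a polynomial ring), and because both filtrations are complete and Zariskian, flatness lifts from the graded level to the completions, and then to their generic fibres. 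The smoothness (projectivity) of $\L$ is essential here to ensure that the PBW identification really produces a symmetric algebra. The argument is symmetric in left/right modules because the associated graded is commutative.

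Finally, one must check that the construction is independent of the choice of formal model $\A$ and Lie lattice $\L$; this follows exactly as in \cite{DhatOne}, since any two such choices admit a common refinement and produce cofinal inverse systems of the same Banach algebras.
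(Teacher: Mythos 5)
Your high-level plan --- verify that each $A_n=\hK{U_\A(\pi^n\L)}$ is a two-sided Noetherian Banach algebra, that the transition maps are dense, and that they are flat on both sides --- is indeed the shape of the argument in the literature, and your Noetherianity and density steps are fine. However, there are two substantive problems. First, the statement is for a \emph{smooth} $(K,A)$-Lie algebra, i.e.\ $L$ is merely coherent and projective over $A$; this does not imply that $L$ admits a smooth $\A$-Lie lattice for any choice of affine formal model $\A$, and ``shrinking $\A$'' is not an operation that produces one. The existence of a global smooth Lie lattice is precisely the hypothesis of the earlier result \cite[Theorem 6.7]{DhatOne}, and the theorem as stated (citing \cite[Theorem 3.5]{Bode}) is the \emph{extension} to arbitrary smooth $L$. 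That extension requires covering $\Sp A$ by affinoid subdomains on which $L$ becomes free, establishing the result there, and then descending; your proof skips this reduction entirely and therefore only handles the special case already covered by \cite{DhatOne}.

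Second, the flatness step is where the real work lies, and the argument as written does not go through. If you reduce the inclusion $\pi^{n+1}\L\hookrightarrow\pi^n\L$ modulo $\pi$, a generator $\pi^{n+1}x$ lands in $\pi\cdot(\pi^n\L)$, so the induced map on mod-$\pi$ symmetric algebras is not a base change of polynomial rings: it kills the augmentation ideal and factors through $\A/\pi$. In particular the induced map on the naive $\pi$-adic associated gradeds of the two Banach completions is nowhere near an isomorphism, and flatness certainly does not follow by inspection. The correct argument (in \cite[\S 3--4]{AW13}, used in \cite[\S 6]{DhatOne} and cited in the present paper via the map $\xi_n\colon\gr U\to\gr U_n$) uses a double filtration: one filters $U(\pi^n\L)$ as a filtered $U(\pi^{n+1}\L)$-module, identifies the resulting associated graded as a polynomial extension of $\gr U(\pi^{n+1}\L)$, and deduces flatness from the Zariskian/complete lifting machinery. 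The rescaling isomorphism $\L_{n+1}/\pi\L_{n+1}\cong\L_n/\pi\L_n$ that your sketch invokes is not the map that the transition morphism actually induces, so the ``manifestly flat'' claim needs to be replaced with this more delicate filtered-module analysis. Note also that the paper does not prove this statement itself; it cites \cite[Theorem 3.5]{Bode}, so the comparison here is to that source and to \cite{DhatOne}, \cite{AW13}.
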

\subsection{Lie algebroids and Fr\'echet completions}
\begin{defn}
A \emph{Lie algebroid} on a rigid analytic $K$-space $X$ is a pair $(\rho, \sL)$ where
\begin{enumerate}[(i)]
\item $\sL$ is a locally free sheaf of $\O_X$-modules of finite rank on $X_{\rig}$.
\item $\sL$ is a sheaf of $K$-Lie algebras, and
\item $\rho: \sL\to \T$ is an $\O$-linear map of sheaves of Lie algebras such that
\begin{equation*}
[x, ay]=a[x, y]+\rho(x)(a)y
\end{equation*}
for any $a\in \O$, $x, y\in \sL$.
\end{enumerate}
\end{defn}
Given a Lie algebroid $\sL$, there exists a unique sheaf $\w{\sU(\sL)}$ on $X_{\rig}$ such that on each admissible open affinoid subspace $Y\subseteq X$, we have
\begin{equation*}
\w{\sU(\sL)}(Y)= \w{U_{\O(Y)}(\sL(Y))}
\end{equation*}
with the obvious restriction maps to smaller affinoid subdomains (see \cite[Theorem 9.3]{DhatOne} and the remark after \cite[Theorem 4.9]{Bode}).\\
\\
If $X$ is a smooth rigid analytic $K$-space, then its tangent sheaf $\T_X$ (together with $\rho=\id$) is a Lie algebroid, and we write $\w{\D}_X=\w{\sU(\T_X)}$. 
\subsection{Localisation and coadmissible modules}
Given a Lie algebroid $\sL$ on an affinoid $K$-space $X$, write $\w{\sU}=\w{\sU(\sL)}$. We can localise coadmissible modules over the Fr\'echet--Stein algebra $\w{\sU}(X)$ as follows (see \cite[\textsection 8.2]{DhatOne}):\\
For any admissible open affinoid subspace $Y$ of $X$, the functor 
\begin{equation*}
\w{\sU}(Y)\underset{\w{\sU}(X)}{\h{\otimes}}- : \C_{\w{\sU}(X)}\to \w{\sU}(Y)\mathrm{-mod}
\end{equation*} 
sends coadmissible $\w{\sU}(X)$-modules to coadmissible $\w{\sU}(Y)$-modules, where each term is equipped with its canonical Fr\'echet topology (it was shown in \cite[Corollary A.6]{Bitoun} that the completed tensor product $\h{\otimes}$ agrees with the operation $\w{\otimes}$ defined in \cite[\textsection 7.3]{DhatOne}). \\
This gives rise (\cite[Theorem 8.2, Theorem 9.5]{DhatOne}) to a fully faithful exact functor
\begin{equation*}
\Loc: \{\text{coadmissible} \ \w{\sU}(X)\text{--modules}\}\to \{\text{sheaves of} \ \w{\sU}\text{--modules on} \ X_{\rig}\},
\end{equation*}
and we call its essential image $\C_{\w{\sU}}$, the category of coadmissible $\w{\sU}$-modules. 
\begin{defn}[{\cite[Definition 9.4]{DhatOne}}]
If $X$ is an arbitrary rigid analytic $K$-space and $\sL$ is a Lie algebroid on $X$, we say that a sheaf of $\w{\sU(\sL)}$-modules $\M$ on $X_{\rig}$ is coadmissible if there exists an admissible covering $(U_i)$ of $X$ by affinoids such that for each $i$, $\M|_{U_i}\cong \Loc M_i$ for some coadmissible $\w{\sU(\sL)}(U_i)$-module $M_i$.
\end{defn}
If $X$ is smooth, we shorten $\C_{\w{\D}_X}$ to $\C_X$.

\section{Faithfully flat completions of deformable algebras}
\subsection{Statement of the theorem and preliminaries}
\label{fflat}
Recall that a positively filtered $R$-algebra $U$ is called \emph{deformable} if $\gr U$ is flat over $R$. We define its $n$th deformation to be the subring
\begin{equation*}
U_n:=\sum_{i\geq 0} \pi^{in}F_iU.
\end{equation*}
Let $U$ be a deformable $R$-algebra such that $\gr U$ is a commutative Noetherian $R$-algebra and $F_0U$ is $\pi$-adically separated.\\
Note that these assumptions make $F_0U$ a commutative Noetherian $R$-algebra, as it is a quotient of $\gr U$. Moreover, $F_iU$ is a finitely generated $F_0U$-module by \cite[Lemma 6.5]{DhatOne}\\
\\
For example, the properties above are satisfied if $U=U_{\A}(\L)$ is the enveloping algebra of a smooth $(R, \A)$-Lie algebra $\L$ over an affine formal model $\A$ of some affinoid $K$-algebra $A$.\\
It was shown in \cite[Theorem 6.7]{DhatOne} that $\wK{U}=\varprojlim \hnK{U}$ is a two-sided Fr\'echet--Stein algebra.\\
In this section, we prove Theorem \ref{introthmC} from the introduction by proving the corresponding result for deformable algebras.
\begin{thm}
Let $U$ be a deformable $R$-algebra such that $\gr U$ is commutative Noetherian $R$-algebra and $F_0U$ is $\pi$-adically separated. Then the natural morphism $U_K\to \wK{U}$ is faithfully flat.
\end{thm}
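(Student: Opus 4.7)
My approach splits into two stages: first proving flatness via the Fr\'echet--Stein decomposition $\wK{U} = \varprojlim \hnK{U}$ applied to each Banach piece separately, and then upgrading to faithful flatness by a careful choice of integral model.

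For flatness, the starting observation is that each deformation $U_n$ is itself Noetherian (since $\gr U$ is), so its $\pi$-adic completion $\hat{U_n}$ is flat over $U_n$, and inverting $\pi$ gives $\hnK{U}$ flat over $U_{n,K} = U_K$. For any finitely generated $U_K$-module $M$, I would take a finite presentation $U_K^a \to U_K^b \to M \to 0$ (possible by Noetherianity of $U_K$), apply $\hnK{U} \otimes_{U_K}-$ at each level $n$, and verify that the resulting Banach modules $M_n := \hnK{U} \otimes_{U_K} M$ assemble into a coadmissible $\wK{U}$-module; a comparison of right-exact functors in $M$ then identifies $\wK{U} \otimes_{U_K} M \cong \varprojlim M_n$. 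Given a short exact sequence $0 \to N \to U_K^b \to M \to 0$, the Banach-level sequences are exact by flatness of each $\hnK{U}$, the coadmissible system $(N_n)$ satisfies Mittag--Leffler, and $\varprojlim$ preserves exactness on such systems, yielding flatness of $\wK{U}$ over $U_K$.

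For faithful flatness, given a nonzero finitely generated $M$, it suffices by the coadmissible identification to exhibit some $n$ with $M_n \neq 0$: indeed, a coadmissible module $\varprojlim M_n$ is zero iff each $M_n$ is zero. The strategy is to choose a finitely generated $U$-submodule $M_\A \subset M$ with $M_\A \otimes_R K = M$, equip it with a good filtration $F_\bullet M_\A$, and then form the rescaled submodule $M_\A^{(n)} := \sum_i \pi^{in} F_i M_\A \subset M$ in analogy with the definition of $U_n$; this is a finitely generated $U_n$-submodule of $M$ with $M_\A^{(n)} \otimes_R K = M$. For $n$ sufficiently large, $M_\A^{(n)}$ should be $\pi$-adically separated, so it embeds into its $\pi$-adic completion $\hat{U_n} \otimes_{U_n} M_\A^{(n)}$; tensoring with $K$ then gives $M \hookrightarrow M_n$, so $M_n \neq 0$.

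The main obstacle is establishing $\pi$-adic separation of $M_\A^{(n)}$ for $n$ large (a naive attempt using just $M_\A$ can fail, as simple examples over $R[x]$ show). The $R$-flatness of $\gr U$ implies, via an induction on filtration degree, that each $F_iU$ is $\pi$-saturated in $U$ and analogously each $F_i M_\A^{(n)}$ is $\pi$-saturated in $M_\A^{(n)}$; combined with the hypothesis that $F_0U$ is $\pi$-adically separated and the fact that the rescaling by $\pi^{in}$ for large $n$ progressively absorbs higher filtration pieces into lower ones, this reduces $\pi$-adic separation of $M_\A^{(n)}$ to separation at the level of its graded pieces over $\gr U_n \cong \gr U$, which for $n$ large are finitely generated over the $\pi$-adically separated ring $F_0U$ in a controlled way. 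Putting this together gives $M_n \neq 0$, and hence $\wK{U} \otimes_{U_K} M \neq 0$, completing the proof.
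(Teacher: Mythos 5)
Your flatness argument parallels the one the paper cites (Lemma 4.14 of Bode's paper), and your faithfulness argument uses the same key construction as the paper: the rescaled submodule $M_\A^{(n)}=\sum_i\pi^{in}F_iM_\A$, which is the paper's $M_n$, together with the observation that $\gr M_\A^{(n)}$ is $\pi$-torsionfree once $n$ is large. Where you diverge is the endgame. The paper reduces to a \emph{simple} module $N$, verifies that $S_n=1+\pi U_n$ is an Ore set, uses simplicity to split into the cases that $N$ is $S_n$-torsion or $S_n$-torsionfree, rules out the former by Nakayama applied to $F_jM_n$ over $F_0U$, and concludes via a Krull intersection argument for the noncommutative ring $U_n$. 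Your route instead aims to show directly that $M_\A^{(n)}$ is $\pi$-adically separated for $n$ large and \emph{arbitrary} finitely generated $M$, combining strictness of the filtration (from $\pi$-torsionfreeness of $\gr M_\A^{(n)}$, which gives $\pi^kM_\A^{(n)}\cap F_jM_\A^{(n)}=\pi^kF_jM_\A^{(n)}$) with Krull's intersection theorem over the \emph{commutative} ring $F_0U$ applied to each $F_jM_\A^{(n)}$. This is a genuine alternative --- it avoids the Ore set machinery and the reduction to simple modules --- and the outline is sound. The one point that needs correcting is that ``$F_0U$ $\pi$-adically separated'' (which you repeat from the theorem statement) is not enough for this step: you need $F_0U$ to be $\pi$-adically \emph{complete}, so that $\pi F_0U$ lies in the Jacobson radical and Krull's theorem kills $\bigcap_k\pi^kF_jM_\A^{(n)}$ for the finitely generated $F_0U$-module $F_jM_\A^{(n)}$. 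Separation alone does not suffice: taking $U=R[x]$ with the trivial filtration gives $F_0U=R[x]$ separated, yet $U_K=K[x]\to K\langle x\rangle=\wK{U}$ is not faithfully flat, since $1-\pi x$ becomes a unit. (This is also a slip in the paper's stated hypothesis; its own proof of the auxiliary lemma in \S 3.2 explicitly invokes completeness of $F_0U$, and the intended applications, enveloping algebras over affine formal models, satisfy it.)
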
 
By `faithfully flat' we mean that $\wK{U}$ a faithfully flat $U_K$-module both on the left and on the right. We present the proof for the right module structure, the analogous statement for the left module structure can be shown mutatis mutandis.\\
We note that this result already appears in \cite[Proposition 3.6]{SS} under slightly modified assumptions, but the authors do not follow the last step in the proof and would like to put forward this alternative argument.\\
\\
Note that flatness follows by the same argument as in \cite[Lemma 4.14]{Bode}, so it remains to show faithfulness.\\
\\
Suppose that $N$ is a simple $U_K$-module. We wish to show that $\wK{U}\otimes_{U_K}N\neq 0$, so it suffices to find some $n$ such that $\hnK{U}\otimes_{U_K}N\neq 0$.\\
Let $M$ be a finitely generated $U$-submodule of $N$ such that $N=M_K$, and equip $M$ with a good filtration $F_\bullet M$ (pick a finite generating set $m_1, \dots, m_r$ and set $F_iM=\sum_{j=1}^r F_iUm_j$). We can now form the finitely generated $U_n$-module
\begin{equation*}
M_n=\sum_{i\geq0} \pi^{in}F_iM\subseteq M.
\end{equation*}
This is equipped with the filtration
\begin{equation*}
F_jM_n=\sum_{i=0}^j \pi^{in}F_iM,
\end{equation*}
making it a filtered $U_n$-module.\\
Clearly $M_n\otimes_R K=N$. Moreover, $F_jM_n\subseteq F_jM$ is a finitely generated $F_0U$-module for any $j, n\geq 0$, as $F_0U$ is Noetherian.\\
\\
Recall that \cite[Lemma 3.5]{AW13} provides us with an isomorphism $\xi_n: \gr U\to \gr U_n$ given by multiplication by $\pi^{nj}$ on the $j$th graded piece. The lemma below discusses an analogous morphism for modules.

\begin{lem}
There exists a graded $R$-linear morphism $\mu_n: \gr M\to \gr M_n$ such that
\begin{enumerate}[(i)]
\item $\mu_n(m+F_{j-1}M)=\pi^{nj}m+F_{j-1}M_n$ for all $m\in F_jM$.
\item $\mu_n(u \cdot m)=\xi_n(u)\cdot \mu_n(m)$ for all $u\in \gr U$, $m\in \gr M$.
\item $\mu_n$ is surjective.
\item $\ker (\mu_n|_{\gr_j M})=(\gr_j M)[\pi^{nj}]$.
\end{enumerate}
\end{lem}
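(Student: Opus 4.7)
The plan is to define $\mu_n$ by the formula in (i) and verify the four properties in sequence. For (i), the only point is well-definedness: if $m \in F_{j-1}M$, then $\pi^{n(j-1)}m$ lies in the top summand $\pi^{n(j-1)}F_{j-1}M$ of $F_{j-1}M_n$, so $\pi^{nj}m = \pi^n \cdot \pi^{n(j-1)}m \in \pi^n F_{j-1}M_n \subseteq F_{j-1}M_n$. For (ii), a one-line computation on representatives $u_0 \in F_iU$ and $m_0 \in F_jM$ of classes in $\gr_i U$ and $\gr_j M$ shows that both $\mu_n(u \cdot m)$ and $\xi_n(u) \cdot \mu_n(m)$ equal $\pi^{n(i+j)}u_0 m_0 + F_{i+j-1}M_n$, using that $\xi_n$ is multiplication by $\pi^{ni}$ on $\gr_i U$. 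For (iii), the defining identity $F_jM_n = F_{j-1}M_n + \pi^{nj}F_jM$ ensures that every class of $\gr_j M_n$ is represented by an element $\pi^{nj}m$ with $m \in F_jM$, hence lies in the image of $\mu_n$.

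The real content is (iv). The inclusion $\ker(\mu_n|_{\gr_j M}) \subseteq (\gr_j M)[\pi^{nj}]$ is immediate from $F_{j-1}M_n \subseteq F_{j-1}M$: any lift $m$ of a class annihilated by $\mu_n$ satisfies $\pi^{nj}m \in F_{j-1}M_n \subseteq F_{j-1}M$, so its class in $\gr_j M$ is $\pi^{nj}$-torsion. The reverse inclusion is equivalent to the assertion
\begin{equation*}
\pi^{nj}F_jM \cap F_{j-1}M \subseteq F_{j-1}M_n.
\end{equation*}
My approach would be induction on $j$ using the recursive splitting $F_{j-1}M_n = F_{j-2}M_n + \pi^{n(j-1)}F_{j-1}M$: given $\pi^{nj}m \in F_{j-1}M$, one seeks a decomposition $\pi^{nj}m = \pi^{n(j-1)}y + z$ with $y \in F_{j-1}M$ and $z \in F_{j-2}M$, so that the first summand already lies in $F_{j-1}M_n$ by construction while the inductive hypothesis handles $z$.

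The main obstacle is producing such a decomposition: the naive choice $y = \pi^n m$ fails because $\pi^n m$ need not lie in $F_{j-1}M$ even when $\pi^{nj}m$ does. I expect the correct $y$ to emerge by combining the $R$-torsion-freeness of $M$, inherited from the embedding into the $K$-vector space $N$, with the explicit shape of the finite sum defining $F_{j-1}M_n$. A more conceptual route would use the Rees module $\bigoplus_i F_iM \cdot t^i$ with the specialization $t \mapsto \pi^n$, which realizes $M_n$ as a quotient of this Rees module and reduces (iv) to comparing the induced and intrinsic filtrations there.
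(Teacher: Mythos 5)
Parts (i)--(iii), and the inclusion $\ker(\mu_n|_{\gr_j M})\subseteq(\gr_j M)[\pi^{nj}]$, match the paper's argument exactly and are correct. The paper's proof of (iv) writes out precisely this inclusion and then asserts ``the reverse inclusion is clear''.

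Your unease about that reverse inclusion is well founded, and you should not try to patch your decomposition argument: the reverse inclusion is \emph{false} in general, so no choice of $y$ can work. Take $U=R[\partial]$ with the degree filtration, let $N=K[\partial]/(\pi^3\partial^2-1)$ (a field, hence a simple $U_K$-module, since $\pi^3$ is not a square in $K$), and let $M=U\cdot 1$ with its good filtration. Write $\alpha$ for the image of $\partial$ in $N$, so $\alpha^2=\pi^{-3}$; then $F_1M=R+R\alpha$, $F_2M=R\pi^{-3}+R\alpha$ and $F_3M=R\pi^{-3}+R\pi^{-3}\alpha$, so $\gr_3 M\cong R/\pi^3R$ generated by the class $\overline{m}$ of $m:=\pi^{-3}\alpha$. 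With $n=1$, $j=3$: we have $\pi^3m=\alpha\in F_2M$, so $\overline{m}\in(\gr_3 M)[\pi^3]$; but $F_2M_1=F_0M+\pi F_1M+\pi^2F_2M=\pi^{-1}R+\pi R\alpha$, which does not contain $\alpha$ because $\{1,\alpha\}$ is a $K$-basis of $N$. Hence $\mu_1(\overline{m})=\alpha+F_2M_1\neq 0$, and (iv) fails as an equality. So the lemma as stated is slightly too strong, and the paper's ``clear'' is an overclaim.

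Fortunately this is harmless for the argument, and it should also tell you where to stop: the only place (iv) is used is the Corollary that follows, and there one passes from $\mu_n(\pi\overline{y})=0$ to the conclusion that $\pi\overline{y}$ is $\pi^{nj}$-torsion (hence $\overline{y}$ is $\pi$-torsion). That step uses exactly the inclusion $\ker(\mu_n|_{\gr_j M})\subseteq(\gr_j M)[\pi^{nj}]$, which you have proved. So the correct move is to restate (iv) as an inclusion and end your proof there, abandoning the search for the decomposition $\pi^{nj}m=\pi^{n(j-1)}y+z$.
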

\begin{proof}
It is immediate from the definition of the filtrations that (i) gives a well-defined $R$-linear graded morphism $\mu_n$ satisfying (ii).\\
For (iii), note that $\pi^{nj}F_jM+F_{j-1}M_n=F_jM_n$ for any $j\geq 0$, so that $\mu_n(\gr_j M)=\gr_jM_n$.\\
For (iv), let $m\in F_jM$ with the property that $\overline{m}\in \gr_j M$ is annihilated by $\mu_n$. This means that $\pi^{nj}m\in F_{j-1}M_n=\sum_{i=0}^{j-1}\pi^{in}F_iM\subseteq F_{j-1}M$. So $\pi^{nj}\overline{m}=0$, i.e. $\overline{m}\in (\gr_j M)[\pi^{nj}]$. The reverse inclusion is clear.
\end{proof}
\begin{cor}
Suppose that $\pi^n$ annihilates the $\pi$-torsion of $\gr M$. Then $\gr M_n$ is $\pi$-torsionfree.
\end{cor}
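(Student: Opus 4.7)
The plan is to exploit the surjection $\mu_n\colon \gr M \twoheadrightarrow \gr M_n$ from part (iii) of the lemma together with the kernel description in part (iv). Since multiplication by $\pi$ preserves degrees, it suffices to show that each graded piece $\gr_j M_n$ is $\pi$-torsionfree.

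Fix $j \geq 0$ and suppose $\overline{x} \in \gr_j M_n$ satisfies $\pi \overline{x} = 0$. By surjectivity of $\mu_n$, write $\overline{x} = \mu_n(\overline{m})$ for some $\overline{m} \in \gr_j M$. Then $\mu_n(\pi\overline{m}) = \pi \overline{x} = 0$ by $R$-linearity, so part (iv) gives $\pi\overline{m} \in (\gr_j M)[\pi^{nj}]$, and hence $\pi^{nj+1}\overline{m} = 0$. In particular $\overline{m}$ lies in the $\pi$-torsion of $\gr M$, so the hypothesis forces $\pi^n \overline{m} = 0$. For $j \geq 1$ we have $nj \geq n$, so $\pi^{nj}\overline{m} = 0$, which by part (iv) places $\overline{m}$ back in $\ker(\mu_n|_{\gr_j M})$. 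Thus $\overline{x} = \mu_n(\overline{m}) = 0$.

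The case $j = 0$ has to be handled separately: here $\ker(\mu_n|_{\gr_0 M}) = (\gr_j M)[\pi^0] = 0$, so the conclusion $\pi^n \overline{m} = 0$ is not strong enough on its own. Instead, I would observe that $M$ was chosen inside the $K$-vector space $N$, so $M$ is $R$-torsionfree, and consequently $\gr_0 M = F_0 M$ is $\pi$-torsionfree. Since $\mu_n$ acts as the identity on $\gr_0$, the relation $\pi\overline{x} = 0$ translates directly to $\pi\overline{m} = 0$ in $\gr_0 M$, giving $\overline{m} = 0$.

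The main calculation is the short chain of implications from $\pi\overline{x} = 0$ to $\pi^{nj+1}\overline{m} = 0$ to $\pi^{nj}\overline{m} = 0$ via the hypothesis, which is essentially mechanical once one has the lemma. The only subtlety worth flagging is the degree-zero piece, where the kernel in (iv) degenerates and one needs the separate input that $F_0 M$ is torsionfree because $M$ sits inside a $K$-vector space; it is easy to overlook this edge case and imagine that the hypothesis alone handles all degrees uniformly.
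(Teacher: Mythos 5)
Your proof is correct and follows essentially the same argument as the paper: lift $\overline{x}$ via surjectivity of $\mu_n$, use the kernel description in part (iv) to conclude the lift is $\pi$-torsion, apply the hypothesis to bound its annihilator by $\pi^n\leq\pi^{nj}$ for $j\geq 1$, and handle $j=0$ separately by noting $\gr_0 M_n = F_0 M$ is $\pi$-torsionfree since $M$ sits inside a $K$-vector space.
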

\begin{proof}
Let $\overline{x}\in \gr_jM_n$ satisfy $\pi \overline{x}=0$. If $j\geq 1$, then surjectivity of $\mu_n$ (Lemma \ref{fflat}.(iii)) implies that there exists $\overline{y}\in \gr_jM$ such that $\mu_n(\overline{y})=\overline{x}$, and hence 
\begin{equation*}
\mu_n(\pi \overline{y})=\pi \overline{x}=0.
\end{equation*}
Therefore $\overline{y}\in \gr M$ is $\pi$-torsion by Lemma \ref{fflat}.(iv), implying that $\pi^n \overline{y}=0$ by assumption. As $j\geq 1$, it follows that $\overline{x}=\pi^{nj}\overline{y}=0$ as well.\\
\\
As $\gr_0 M_n=F_0 M$ is also $\pi$-torsionfree, we have $(\gr M_n)[\pi]=\oplus_j (\gr_j M_n)[\pi]=0$, as required.
\end{proof}
Since $\gr M$ is finitely generated over the Noetherian ring $\gr U$, it follows from the Corollary that $\gr M_n$ is $\pi$-torsionfree for sufficiently large $n$.
\subsection{Torsionfree deformations and the proof of Theorem \ref{fflat}}
\label{tfdef}
\begin{lem}
Suppose that $\gr M_n$ is $\pi$-torsionfree. Then $N$ is $(1+\pi U_n)$-torsionfree. 
\end{lem}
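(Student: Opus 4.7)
The plan is to embed $M_n$ into its $\pi$-adic completion $\hn{M}$, in which the element $1 + \pi u$ is automatically invertible. First I reduce to $M_n$: the inclusion $M_n \subseteq M \subseteq N$ makes $M_n$ a $\pi$-torsionfree submodule of the $K$-vector space $N$, so any $v \in N$ can be written as $v = m/\pi^s$ with $m \in M_n$, and the relation $(1 + \pi u) v = 0$ then forces $(1 + \pi u) m = 0$ in $M_n$. In the $\pi$-adically complete algebra $\hn{U}$, the element $1 + \pi u$ has inverse $\sum_{k \geq 0} (-\pi u)^k$, so it acts injectively on $\hn{M}$. The problem therefore reduces to showing that $M_n \to \hn{M}$ is injective, i.e. that $M_n$ is $\pi$-adically separated.

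The hypothesis that $\gr M_n$ is $\pi$-torsionfree enters via the identity $F_j M_n \cap \pi^s M_n = \pi^s F_j M_n$ for all $j, s \geq 0$. For $s = 1$ this is a short filtration argument: given $m \in F_j M_n$ with $m = \pi m'$ and $m' \in M_n$ of minimal filtration degree $j'$, the assumption $j' > j$ would force $\pi \overline{m'} = 0$ in $\gr_{j'} M_n$ (since $\pi m' = m \in F_{j'-1} M_n$), contradicting $\pi$-torsionfreeness of $\gr M_n$. The general case follows by induction, using $\pi$-torsionfreeness of $M_n$ to divide by $\pi$ uniquely. Applying this identity to any $m \in \bigcap_s \pi^s M_n$, after picking $j_0$ with $m \in F_{j_0} M_n$, shows $\bigcap_s \pi^s M_n \subseteq \bigcup_j \bigcap_s \pi^s F_j M_n$, so $\pi$-adic separation of $M_n$ reduces to that of each finitely generated $F_0 U$-module $F_j M_n$.

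The main obstacle will be this last separation statement. Each $F_j M_n$ is finitely generated over the commutative Noetherian $\pi$-adically separated ring $F_0 U$ and is itself $\pi$-torsionfree; by Krull's intersection theorem $\bigcap_s \pi^s F_j M_n$ is annihilated by some $1 + \pi a \in 1 + \pi F_0 U$, so one has to rule out such zero-divisors. In the principal application $F_0 U$ is an affine formal model, hence $\pi$-adically complete, so its finitely generated modules are automatically $\pi$-adically separated and the argument concludes. In the full generality of the theorem hypotheses I expect this step to need some additional structural input, for instance via an Artin--Rees argument, or by exploiting the flatness of $F_0 U \to \widehat{F_0 U}$ to transfer separation from the completion back to $F_0 U$.
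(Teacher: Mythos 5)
Your argument is correct and takes a genuinely different route from the paper, so let me compare. The paper reduces the whole statement to a contradiction: it first invokes the Ore property of $S_n = 1 + \pi U_n$ (via \cite[Corollary 2.2]{Li}) and the simplicity of $N$ to conclude that the $S_n$-torsion submodule is either $0$ or $N$; assuming the latter, it shows $F_j M_n = \pi F_j M_n$ for every $j$ and then appeals to \cite[Corollary 10.19]{AM} to force $F_j M_n = 0$, contradicting $N \neq 0$. You instead prove directly that each $1 + \pi u$ acts injectively on $M_n$ by establishing that $M_n$ is $\pi$-adically separated and then passing to $\h{M_n}$ where $1+\pi u$ is a unit; this bypasses both the Ore-set machinery and the simplicity of $N$ entirely, which is a genuine streamlining. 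The core filtration input is the same in both proofs: your identity $F_j M_n \cap \pi M_n = \pi F_j M_n$ is precisely the paper's observation that $M_n / F_j M_n$ is $\pi$-torsionfree (both coming from $\pi$-torsionfreeness of $\gr M_n$); the two arguments just package it differently, with yours building the full identity $F_j M_n \cap \pi^s M_n = \pi^s F_j M_n$ to control $\bigcap_s \pi^s M_n$ while the paper uses the $s=1$ case inside a Nakayama-type contradiction.

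On your final concern: you are right to flag it, and it is worth noting that the paper itself shares it. The running hypothesis in this section is that $F_0 U$ is $\pi$-adically \emph{separated}, yet the paper's own proof of this lemma invokes \cite[Corollary 10.19]{AM} after describing $F_0 U$ as $\pi$-adically \emph{complete}; what that corollary (and your argument) really needs is that $\pi$ lies in the Jacobson radical of $F_0 U$, which holds when $F_0 U$ is complete but need not hold under separatedness alone (e.g.\ $R[t]$ has the maximal ideal $(1-\pi t)$ not containing $\pi$, and $R[t]/(1-\pi t)$ is a $\pi$-torsionfree, finitely generated, non-separated module). So your instinct that some additional structural input is needed in the abstract generality is sound; in the intended application $F_0 U$ is an affine formal model and hence complete, exactly as you say, and both proofs then close without difficulty.
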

\begin{proof}
Let $S_n=1+\pi U_n$. Equipping $U$ with its $\pi$-adic filtration, it follows from \cite[Corollary 2.2]{Li} that $S_n$ is an Ore set in $U$ and hence in $U_K$. In particular, the set of $S_n$-torsion elements in $N$ is a $U_K$-submodule of $N$, and by simplicity is either $0$ or $N$ itself. Assume therefore that $N$ and hence $M_n$ is $S_n$-torsion.\\
\\
We now claim that $F_jM_n=\pi F_jM_n$ for any $j\geq 0$. Once we have proved the claim, finite generation of $F_jM_n$ over the Noetherian $\pi$-adically complete ring $F_0U$ forces $F_jM_n=0$ by \cite[Corollary 10.19]{AM}, so $M_n=0$, which provides us with the desired contradiction, as $M_n\otimes_R K=N\neq 0$.\\
\\
Let $m\in F_jM_n$. As $M_n$ is $S_n$-torsion, there exists $u\in U_n$ such that $(1-\pi u)m=0$, so $m=\pi um$.\\
Since $\gr M_n$ is $\pi$-torsionfree, the same is true for $F_iM_n/F_{i-1}M_n$ for each $i$, and thus $M_n/F_jM_n$ is also $\pi$-torsionfree. In particular, $\pi um\in F_jM_n$ implies $um\in F_jM_n$ and thus $m\in \pi F_jM_n$, as required.
\end{proof}
\begin{proof}[{Proof of Theorem \ref{fflat}}]
By Corollary \ref{fflat} and Lemma \ref{tfdef}, there exists $t$ such that $N$ is $S_n$-torsionfree for any $n\geq t$. Then $\h{M_n}\neq 0$ by \cite[Theorem 10.17]{AM}. As $M_n$ is $\pi$-torsionfree, so is $\h{M_n}\cong\h{U_n}\otimes_{U_n}M_n$, and hence $\h{M_n}\otimes_R K\cong \hnK{U}\otimes_{U_K}N\neq 0$ for any $n\geq t$. Therefore $\wK{U}\otimes_{U_K}N\neq 0$, as required.
\end{proof}
\begin{cor}
Let $A$ be an affinoid $K$-algebra and let $L$ be a smooth $(K, A)$-Lie algebra. Then $U_A(L)\to \w{U_A(L)}$ is faithfully flat.
\end{cor}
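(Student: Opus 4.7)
The plan is to deduce the corollary from the preceding Theorem 3.1 applied to a suitable integral form of $U_A(L)$. First I would choose an affine formal model $\A$ of $A$ together with a smooth $\A$-Lie lattice $\L$ of $L$; such a pair exists because $L$ is smooth, and it is exactly the data used to define $\w{U_A(L)}$ in Section~2. Setting $U := U_\A(\L)$ equipped with its standard order (PBW) filtration, the goal reduces to verifying the hypotheses of Theorem 3.1 and then identifying the resulting map with $U_A(L) \to \w{U_A(L)}$.

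The verification rests on Rinehart's PBW isomorphism $\gr U \cong \Sym_\A(\L)$, which is available because $\L$ is projective over $\A$. The symmetric algebra $\Sym_\A(\L)$ is commutative, finitely generated over the Noetherian ring $\A$ (hence itself Noetherian), and flat over $\A$ (and so over $R$) by projectivity of $\L$; this makes $U$ deformable with $\gr U$ a commutative Noetherian $R$-algebra. Moreover, $F_0 U = \A$ is topologically finitely presented over $R$, so $\pi$-adically complete and in particular $\pi$-adically separated. All hypotheses of Theorem 3.1 are thus in place.

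Theorem 3.1 then yields that $U_K \to \wK{U}$ is faithfully flat on each side. The identification with the map of the corollary is routine: base change of enveloping algebras gives $U_\A(\L)\otimes_R K \cong U_A(L)$, while the Fr\'echet completion $\wK{U_\A(\L)}$ is by definition $\w{U_A(L)}$, as recalled in Section~2 where independence of the choice of $\A$ and $\L$ is also noted. The plan contains no substantive obstacle; all the real work sits in Theorem 3.1. The only point requiring care is ensuring that $\L$ is genuinely projective over $\A$ rather than just finitely generated, since that projectivity is what allows Rinehart's theorem to produce a flat, commutative associated graded and so to feed into Theorem 3.1.
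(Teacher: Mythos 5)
The main gap is at the very first step: you assert that a smooth $(K,A)$-Lie algebra $L$ automatically admits a smooth Lie lattice ``because $L$ is smooth.'' That implication is not true in general and is not what the paper claims. Smoothness of $L$ means $L$ is coherent and projective over $A$; the existence of an $\A$-Lie lattice $\L$ that is itself projective over $\A$ is a stronger condition, recorded separately in Section~2 under the phrase ``admits a smooth Lie lattice.'' A Lie lattice always exists, but there is no reason for it to be projective over $\A$, and Theorem~\ref{fflat} cannot be applied to $U_\A(\L)$ unless $\gr U_\A(\L)\cong\Sym_\A(\L)$ is $R$-flat, which is where projectivity of $\L$ is really used.

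The paper handles exactly this issue. It first treats the case where $L$ does admit a smooth Lie lattice (your argument is correct and complete in that special case), and then reduces the general case to it by choosing a finite affinoid covering $(\Sp A_i)$ of $\Sp A$ on which $A_i\otimes_A L$ becomes free --- free modules admit free, hence smooth, lattices. The remaining work is a short descent argument: $U_A(L)\to\oplus_i U_{A_i}(A_i\otimes_A L)$ is faithfully flat by the base-change isomorphism $U(A_i\otimes_A L)\cong A_i\otimes_A U_A(L)$, and composing with the faithfully flat maps $U_{A_i}(A_i\otimes_A L)\to\w{U(A_i\otimes_A L)}$ given by Theorem~\ref{fflat} lets one conclude faithfulness of $U_A(L)\to\w{U_A(L)}$. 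You would need to add both the covering step and this descent argument to make the proposal complete. Everything else in your write-up (the PBW identification of $\gr U$, the flatness of $\Sym_\A(\L)$ over $R$, the verification of the remaining hypotheses, and the identification of the completed algebras) matches the special case of the paper's proof and is fine.
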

\begin{proof}
Flatness was already proven in \cite[Lemma 4.14]{Bode}, so it suffices to show faithfulness.\\
If $L$ admits a smooth Lie lattice, this follows directly from Theorem \ref{fflat}. In the general case, note that there exists a finite affinoid covering $(\Sp A_i)$ of $\Sp A$ such that $A_i\otimes_A L$ is free (and in particular admits a smooth Lie lattice) for each $i$. We thus obtain the commutative diagram
\begin{equation*}
\begin{xy}
\xymatrix{
U_A(L)\ar[r] \ar[d] &\w{U_A(L)}\ar[d]\\
\oplus U_{A_i}(A_i\otimes_A L)\ar[r] & \oplus \w{U(A_i\otimes L)}
}
\end{xy}
\end{equation*}
where the left vertical arrow is faithfully flat due to the isomorphism $U(A_i\otimes_A L)\cong A_i\otimes_A U_A(L)$ (see \cite[Proposition 2.3]{DhatOne}). Theorem \ref{fflat} thus implies that $\oplus \w{U(A_i\otimes_A L)}$ is faithfully flat over $U_A(L)$: if $N$ is a non-zero $U(L)$-module, then there exists some $i$ such that $U(A_i\otimes L)\otimes_{U(L)} N\neq 0$, and hence $\w{U(A_i\otimes L)}\otimes_{U(L)} N\neq 0$. Therefore, if $N$ is a $U(L)$-module such that $\w{U(L)}\otimes N=0$, it follows from $\oplus \w{U(A_i\otimes L)}\otimes_{U(L)}N=0$ that $N=0$.
\end{proof}
If $A$ is a smooth affinoid $K$-algebra and $L=\T(\Sp A)$, this now proves Theorem \ref{introthmC} from the introduction.
\section{Auslander--Gorenstein rings and completed enveloping algebras}

\subsection{Faithfully flat descent}\label{AGfflat}
\begin{defn}Let $A$ be a ring.
\begin{enumerate}[(i)]
\item The \emph{grade} of an $A$-module $M$ is 
\begin{equation*}
j(M)=\min\{i: \Ext^i_A(M, A)\neq 0\},
\end{equation*}
and $\infty$ if no such $i$ exists.
\item We say that $A$ satisfies the \emph{Auslander condition} if for every Noetherian $A$-module $M$ and any $i\geq 0$, we have $j(N)\geq i$ whenever $N$ is a (right) submodule of $\Ext^i_A(M, A)$.
\item A two-sided Noetherian ring is called \emph{Gorenstein} if it has finite left and right injective dimension.
\item A two-sided Noetherian ring is called \emph{Auslander--Gorenstein} if it satisfies the Auslander condition and has finite left and right injective dimension.
\end{enumerate}
\end{defn}

The proof of the following straightforward lemma can be pieced together from the literature (see for example \cite[Theorem 3.3]{LS06} and \cite[Theorem 1.2]{BBP}) but as far as we know it has never been written down in this generality in a single place.

\begin{lem} If $S\to S'$ is a faithfully flat homomorphism of rings with $S'$ Auslander--Gorenstein then $S$ is also Auslander--Gorenstein. Moreover the dimension of $S$ is bounded above by the dimension of $S'$.
\end{lem}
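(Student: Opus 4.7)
The plan is to verify each of the three ingredients in the definition of Auslander--Gorenstein (two-sided Noetherianity, finite injective dimension, Auslander condition) by faithfully flat descent, treating the left and right cases symmetrically.

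First I would verify that $S$ is two-sided Noetherian. Given an ascending chain of left ideals $I_1\subseteq I_2\subseteq\cdots$ in $S$, flatness of $S'$ over $S$ turns each inclusion $I_k\hookrightarrow I_{k+1}$ into an inclusion $I_kS'\hookrightarrow I_{k+1}S'$ of left ideals of $S'$. Noetherianity of $S'$ forces this to stabilise, say $I_kS'=I_{k+1}S'$, and tensoring $0\to I_k\to I_{k+1}\to I_{k+1}/I_k\to 0$ with $S'$ yields $(I_{k+1}/I_k)\otimes_S S'=0$, so faithful flatness gives $I_k=I_{k+1}$. The same argument on the other side works for right ideals.

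Next I would control the injective dimension. The key tool is the flat base change isomorphism
\begin{equation*}
\Ext^i_{S'}(M\otimes_S S',\, S')\;\cong\;\Ext^i_S(M,S)\otimes_S S'
\end{equation*}
valid for any finitely presented $S$-module $M$ (which, by the Noetherianity just established, means any finitely generated $M$); this is proved by taking a resolution of $M$ by finitely generated free $S$-modules and observing that $\Hom_S(-,S)\otimes_S S'=\Hom_{S'}(-\otimes_S S',S')$ on such modules, then passing to cohomology using flatness of $S'$. If $d$ denotes the injective dimension of $S'$, then for $i>d$ the right-hand side vanishes, so by faithful flatness $\Ext^i_S(M,S)=0$ for every finitely generated $M$; since $S$ is Noetherian, this suffices to bound the left injective dimension of $S$ by $d$, and the right case is symmetric.

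Finally I would check the Auslander condition. Let $M$ be a Noetherian left $S$-module and $N$ a right $S$-submodule of $\Ext^i_S(M,S)$. Because the ambient Ext group is computed from a complex of finitely generated free right $S$-modules and $S$ is now known to be Noetherian, $\Ext^i_S(M,S)$ is itself a Noetherian right $S$-module, so $N$ is finitely generated. Applying the base change isomorphism in both directions: $\Ext^i_S(M,S)\otimes_S S'\cong\Ext^i_{S'}(M\otimes_S S',S')$, and flatness embeds $N\otimes_S S'$ into this Ext group. The Auslander condition over $S'$ then gives $\Ext^k_{S'}(N\otimes_S S',S')=0$ for $k<i$, which by the base change isomorphism equals $\Ext^k_S(N,S)\otimes_S S'$, so faithful flatness yields $\Ext^k_S(N,S)=0$ for $k<i$, i.e.\ $j_S(N)\geq i$.

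The main technical point to get right is the flat base change isomorphism for $\Ext$ and its applicability to the submodule $N$; this is why I would establish Noetherianity of $S$ first, so that $N$ is automatically finitely generated. The rest of the argument is then essentially formal, consisting of translating each condition to $S'$ via $-\otimes_S S'$ and pulling the conclusion back using faithful flatness.
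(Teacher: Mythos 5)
Your proof is correct and follows essentially the same route as the paper: faithfully flat descent of Noetherianity via ascending chains, flat base change for Ext to descend the Auslander condition, and vanishing of $\Ext^{>d}$ on finitely generated modules to bound the injective dimension (the paper checks only cyclic modules, citing Jans, but the underlying argument is identical). You are a bit more explicit about why the base change isomorphism applies to the submodule $N$ (namely that $N$ is finitely generated because $S$ is already known to be Noetherian), which is a point the paper leaves implicit.
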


\begin{proof} First we show that $S$ is left Noetherian. Suppose that $(I_n)$ is an ascending chain of left ideals in $S$. For each $n\in \mathbb{N}$ let $J_n$ be the image of $S'\otimes_S I_n\to S'$ so that $(J_n)$ is an ascending chain of left ideals in $S'$. Since $S'$ is left Noetherian the chain $J_n$ must terminate. Because $S\to S'$ is faithfully flat $J_n/J_{n-1}\cong S'\otimes_S I_n/I_{n-1}\neq 0$ whenever $I_n/I_{n-1}\neq 0$. Thus the chain $I_n$ must terminate. \\
\\
By symmetry (or by considering the opposite rings) $S$ is also right Noetherian. \\
\\
Next we show that $S$ satisfies the Auslander condition. Let $N$ be a right $S$-submodule of $\Ext^i_S(M, S)$ for some finitely generated left $S$-module $M$, and let $j<i$. Since $S\to S'$ is flat there is an isomorphism $S'\otimes_S \Ext_S^j(N,S)\cong \Ext_{S'}^j(N\otimes_S S',S')$. The latter is zero since $S'$ satifies the Auslander condition and $N\otimes_S S'$ is isomorphic to a submodule of $\Ext_S^i(M,S)\otimes_S S'\cong \Ext_{S'}^i(S'\otimes_S M,S')$ by the flatness of $S\to S'$ again. Since $S\to S'$ is faithfully flat we may deduce that $\Ext_S^j(N,S)=0$ as required.  \\
\\
Finally suppose that $d=\mathrm{injdim}_{S'} S'$. For each cyclic $S$-module $M$ we can compute $\Ext^{d+1}(M,S)\otimes_S S'\cong \Ext^{d+1}(S'\otimes_S M,S')=0$. Since $S\to S'$ is faithfully flat we can deduce $\Ext^{d+1}(M,S)=0$ and so, using \cite[p55]{Jans}, that $\mathrm{injdim}_S S\leq d$.
\end{proof}

\subsection{Smooth affinoids and Gorenstein formal models}\label{GorensteinFormalModel}

Recall that Raynaud's Theorem \cite[Theorem 4.1]{BL1} establishes an equivalence of categories $\X\mapsto \X_{rig}$ between the category of quasi-compact admissible formal $R$-schemes localised by admissible formal blowing-ups, and the category of quasi-compact rigid analytic spaces over $K$. (Recall that all our rigid analytic spaces are assumed to be quasi-separated.)

\begin{prop} Suppose that $X$ is a smooth quasi-compact rigid analytic space over $K$. Given any quasi-compact formal model $\X$ of $X$ over $R$ there is an admissible formal blowing-up $\X^\dag\to \X$ of formal $R$-schemes with $\X^\dag$ Gorenstein. 
\end{prop}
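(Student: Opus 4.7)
My plan is to reduce to finding a regular modification of $\X$, since regular Noetherian local rings are Gorenstein. The argument will combine Raynaud's theory of admissible formal blow-ups with a resolution-of-singularities step.

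First, I would apply Raynaud's theorem \cite[Theorem 4.1]{BL1} to perform an initial admissible formal blow-up, reducing to the case where $\X$ is admissible, i.e., $R$-flat and topologically of finite presentation over $R$. This reduction is harmless because compositions of admissible formal blow-ups are again admissible.

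Next, I would apply a resolution of singularities---along the lines of Temkin's functorial desingularization \cite{Temkin} in the formal-scheme setting---to produce a proper birational modification $\X^\dag \to \X$ with $\X^\dag$ regular, realised as a finite sequence of blow-ups along smooth centres contained in the non-regular locus of $\X$. Since the rigid generic fibre $X = \X_{\rig}$ is smooth over the characteristic-zero field $K$, every affine chart $\Spf \A \subseteq \X$ has $\A \otimes_R K$ regular, which forces $\A$ itself to be regular at every prime not containing $\pi$. Hence the non-regular locus of $\X$ is supported in the special fibre $\X \times_R k$, the corresponding ideal contains a power of $\pi$ locally, and each blow-up in the resolution process is an admissible formal blow-up. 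Invoking the functoriality of the resolution to ensure the same holds at every intermediate step, the composition $\X^\dag \to \X$ is an admissible formal blow-up.

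Finally, since a regular Noetherian local ring of Krull dimension $d$ has self-injective dimension exactly $d$, the regular formal scheme $\X^\dag$ is automatically Gorenstein, completing the plan. The main obstacle I anticipate is the desingularization step in the formal setting, especially in mixed characteristic where unconditional resolution of singularities is not available in general; the likely remedy is to argue affine chart by affine chart, applying algebraic resolution to the Noetherian quasi-excellent scheme $\Spec \A$ (reducing, where necessary, to the characteristic-zero situation by exploiting that the obstruction to regularity lives entirely over $\A \otimes_R K$) and then translating the resulting centres, which lie over the special fibre, back into admissible formal blow-ups on $\Spf \A$.
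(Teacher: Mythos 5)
Your proposal takes a genuinely different route from the paper, and unfortunately it runs into a serious obstruction that the paper's argument is specifically designed to circumvent.

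You correctly observe that since $X$ is smooth over $K$, the non-regular locus of any admissible formal model $\X$ is concentrated in the special fibre $\X \times_R k$. But this is precisely where your plan breaks down: the special fibre lives over the residue field $k$, which has positive characteristic when $K$ has mixed characteristic, so the singular locus is \emph{not} over $\A\otimes_R K$. Your proposed remedy---``reducing to the characteristic-zero situation by exploiting that the obstruction to regularity lives entirely over $\A\otimes_R K$''---is therefore based on a confusion that contradicts your own earlier (correct) observation. In the mixed characteristic case, resolving $\Spec\A$ is a genuine open problem in dimension $\geq 4$, and Temkin's desingularization \cite{Temkin} (which requires the scheme to live over $\mathbb{Q}$) does not apply. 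In fact it is not known that a \emph{regular} formal model $\X^\dag$ can be obtained by admissible formal blow-ups without extending the base field.

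The paper sidesteps this entirely by aiming only for Gorenstein, not regular. It invokes \cite[Theorem 1.4]{Hartl}, which provides a strictly semi-stable (hence regular, hence Gorenstein) formal model $\X'$ over the ring of integers $R'$ of some finite separable extension $K'/K$, together with a faithfully flat surjection $\X'\to\X^\dag$, where $\X^\dag\to\X$ is an admissible formal blow-up. Since Gorensteinness is local and descends along faithfully flat ring maps (Lemma \ref{AGfflat}), one concludes that $\X^\dag$ is Gorenstein without ever needing it to be regular. The finite base extension is exactly the extra freedom needed to get a regular model, and the flat-descent step is what transports the conclusion back down to $R$---two ingredients your direct approach is missing.
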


\begin{proof} By \cite[Theorem 1.4]{Hartl} there is a finite separable field extension $K'$ of $K$ with ring of integers $R'$, a quasi-compact strictly semi-stable formal $R'$-scheme $\X'$, and a composition of morphisms of quasi-compact admissible formal $R$-schemes $\X'\to \X^\dag\to \X$ such that $\X^\dag \to \X$ is an admissible formal blowing-up and $\X'\to \X^\dag$ is flat and surjective. Since being Gorenstein is a local condition and $\X'\to \X^\dag$ is faithfully flat it suffices by Lemma \ref{AGfflat} to show that $\X'$ is Gorenstein. But $\X'$ is a regular scheme (\cite[Remark 1.1.1]{Hartl}) and so Gorenstein. 
\end{proof}

\subsection{Completed enveloping algebras over Gorenstein algebras}\label{CompleteEnvAlg}

\begin{lem} Suppose that $A$ is a commutative Gorenstein $k$-algebra for some commutative ring $k$, and $L$ is a smooth $(k,A)$-Lie algebra of rank $r$. Then $U(L)$ is Auslander--Gorenstein of dimension at most $\dim A+r$. 
\end{lem}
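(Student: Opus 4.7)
My plan is to reduce the Auslander--Gorenstein property of $U(L)$ to that of its associated graded ring, which will turn out to be a symmetric algebra, and then invoke standard filtered-ring lifting results.

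First, I would equip $U(L)$ with the standard Poincar\'e--Birkhoff--Witt filtration $F_\bullet$, where $F_0 U(L)=A$ and the image of $L$ sits in $F_1 U(L)$. Since $L$ is projective over $A$ of constant rank $r$, Rinehart's PBW theorem gives a graded isomorphism of $A$-algebras
\begin{equation*}
\gr U(L) \;\cong\; \Sym_A L.
\end{equation*}
Because $L$ is locally free of rank $r$ on $\Spec A$, the ring $\Sym_A L$ is locally (in the Zariski topology on $\Spec A$) isomorphic to a polynomial ring $A[x_1,\dots,x_r]$. Gorensteinness is a local property, and for a commutative Noetherian Gorenstein ring one has $\mathrm{injdim}\, A[x_1,\dots,x_r]=\mathrm{injdim}\,A+r\le \dim A+r$. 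Moreover, every commutative Noetherian Gorenstein ring automatically satisfies the Auslander condition; this is a standard consequence of local duality, since for any finitely generated module $M$ the grade $j(M)$ equals the codimension of the support of $M$. Hence $\Sym_A L$ is commutative Auslander--Gorenstein of injective dimension at most $\dim A+r$.

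Next I would transfer the property back to $U(L)$. The PBW filtration on $U(L)$ is positive, exhaustive, and $F_0U(L)=A$ is Noetherian with each $F_i U(L)$ finitely generated over $A$, so the Rees ring $\widetilde{U(L)}=\bigoplus_i F_iU(L)\,t^i$ is Noetherian. The classical filtered-to-graded lifting theorem (Bj\"ork, Ekstr\"om, Levasseur--Stafford) then says that if $\gr U(L)$ is Auslander--Gorenstein of self-injective dimension $d$, the same holds for $U(L)$ with dimension at most $d$. Applying this with $d\le\dim A+r$ gives the desired bound.

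The main step that requires care is the lifting result in the last paragraph: one has to verify the good-filtration hypotheses that make the comparison $\Ext^i_U(M,U)\rightsquigarrow \Ext^i_{\gr U}(\gr M,\gr U)$ behave correctly, in particular that a good filtration can be chosen on any finitely generated $U(L)$-module (which is automatic here since $A=F_0 U(L)$ is Noetherian) and that the filtration is separated. Once this is in place, the Auslander condition and the dimension bound both descend to $U(L)$ by routine arguments on Rees modules. The computation of the injective dimension of $\Sym_A L$ and the PBW identification are then the only specifically Lie--Rinehart-flavoured ingredients.
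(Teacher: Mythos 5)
Your proposal is correct and follows essentially the same overall strategy as the paper: equip $U(L)$ with the PBW filtration, identify $\gr U(L)\cong\Sym_A L$ via Rinehart's theorem, verify that $\Sym_A L$ is Auslander--Gorenstein of dimension at most $\dim A+r$, and lift the Auslander--Gorenstein property along the filtration (the paper cites Levasseur \cite[Th\'eor\`eme 4.4, Remarque 4.5]{Lev85}; your ``classical filtered-to-graded lifting theorem'' refers to the same circle of results, and your comments about good filtrations, separatedness, and the Noetherian Rees ring are exactly the hypotheses that make it apply).

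The one place where you diverge in detail is the treatment of the commutative graded ring. The paper reduces to the free case $\Sym_A L\cong A[t_1,\dots,t_r]$ via a \emph{faithfully flat} covering $\Sym_A L\to\bigoplus_i\Sym_{A_{f_i}}(A_{f_i}\otimes_A L)$, applies its general descent result (Lemma~\ref{AGfflat}, which holds for noncommutative rings and is used repeatedly elsewhere in the paper), and then cites an external reference for the polynomial ring. You instead use that Gorensteinness is a Zariski-local property, compute the injective dimension of a polynomial ring directly, and invoke the fact that any commutative Noetherian Gorenstein ring automatically satisfies the Auslander condition via local duality (grade equals codimension of support). Both sub-arguments are valid; yours is more self-contained on the commutative side, while the paper's buys uniformity by reusing the noncommutative descent lemma it has already established. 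Either way, the content of the proof is the same and your write-up would serve.
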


\begin{proof} By \cite[Theorem 3.1]{Rinehart} there is a positive filtration on $U(L)$ such that $\gr U(L)\cong \Sym(L)$ is commutative. By \cite[Th\'eor\`eme 4.4, Remarque 4.5]{Lev85} it thus suffices to show that $\Sym(L)$ is Auslander--Gorenstein of dimension at most $\dim A+r$. \\
\\
Since $L$ is a finitely generated projective $(k,A)$-module there is a cover of $\Spec(A)$ by basic open subsets $D(f_1),\ldots,D(f_m)$ such that $A_{f_i}\otimes_A L$ is a free $A_{f_i}$-module of rank $r$ for each $i=1,\ldots,m$. Since $\Sym(L)\to \oplus_{i=1}^m\Sym(A_{f_i}\otimes_A L)$ is faithfully flat we can use Lemma \ref{AGfflat} to reduce to the case that $L$ is free over $A$, i.e. $\Sym(L)$ is isomorphic to a polynomial ring $A[t_1,\ldots,t_r]$. Thus we are done by \cite[Corollary 1]{WIT}. 
\end{proof}

\begin{thm} Suppose that $A$ is a smooth $K$-affinoid algebra over $K$ with affine formal model $\A$ and that $\L$ is a smooth $(R,\A)$-Lie algebra of rank $r$. There is an integer $m\geq 0$ such that $\hK{U(\pi^n\L)}$ is Auslander--Gorenstein of dimension at most $\dim A+r$ for each $n\geq m$. \end{thm}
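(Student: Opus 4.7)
My plan is to combine the three themes developed in Section~4: first reduce to the case of a Gorenstein affine formal model via Proposition~\ref{GorensteinFormalModel}, then establish the Auslander--Gorenstein property in that case by a filtered-to-graded argument extending Lemma~\ref{CompleteEnvAlg}, and finally descend the conclusion back via Lemma~\ref{AGfflat}.

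For the reduction, I would apply Proposition~\ref{GorensteinFormalModel} to the smooth affinoid $\Sp A$ with its given formal model $\Spf\A$, obtaining an admissible formal blowing-up $\X^\dag\to\Spf\A$ with $\X^\dag$ Gorenstein. Picking a finite affine cover $\X^\dag = \bigcup_{i=1}^s \Spf\A_i$ then yields a finite admissible affinoid covering $\Sp A = \bigcup_i \Sp A_i$ in which each $A_i$ carries a Gorenstein affine formal model $\A_i$. Since each $\A_i$ is canonically an $\A$-algebra, I can form $\L_i := \A_i \cdot \L$ inside $A_i \otimes_A L$, which for $n$ sufficiently large is a smooth $(R,\A_i)$-Lie lattice of $A_i \otimes_A L$ with $\pi^n\L_i = \A_i \cdot \pi^n\L$.

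On each Gorenstein piece $(\A_i, \L_i)$ I would study the order filtration on the deformation $U_{\A_i}(\pi^n\L_i)$, whose associated graded is $\Sym_{\A_i}(\pi^n\L_i)$. The Corollary following Lemma~\ref{fflat} ensures this graded ring is $\pi$-torsionfree for all $n$ beyond some threshold $m_i$, so $\pi$-adic completion respects the filtration and the induced filtration on $\hK{U_{\A_i}(\pi^n\L_i)}$ has associated graded $\hK{\Sym_{\A_i}(\pi^n\L_i)}$. Locally on $\Spf\A_i$ where $\L_i$ is free, this becomes a Tate algebra $A_i\langle X_1,\ldots,X_r\rangle$, i.e.\ a smooth commutative affinoid algebra of Krull dimension $\dim A + r$, in particular Auslander regular. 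A Levasseur-type filtered-to-graded transfer in the spirit of \cite[Th\'eor\`eme 4.4]{Lev85}, suitably adapted to the complete setting, then propagates the Auslander--Gorenstein property together with the dimension bound $\dim A + r$ up to $\hK{U_{\A_i}(\pi^n\L_i)}$. Setting $m := \max_i m_i$ handles the finitely many indices uniformly.

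To descend, I would verify that the natural morphism $\hK{U_\A(\pi^n\L)} \to \bigoplus_i \hK{U_{A_i}(\pi^n\L_i)}$ is faithfully flat: flatness follows from the localisation theory of Fr\'echet completed enveloping algebras recalled in Section~2.3 together with the faithful flatness results of Section~3, and faithfulness from the $\Sp A_i$ being an admissible cover of $\Sp A$. Lemma~\ref{AGfflat} then transfers the Auslander--Gorenstein property with dimension bound $\dim A + r$ back to $\hK{U_\A(\pi^n\L)}$. The main obstacle I anticipate is the filtered-to-graded transfer in the middle paragraph, since the classical Levasseur results are stated for positive filtrations with finitely generated graded pieces over a Noetherian base whereas our graded pieces are $K$-Banach modules. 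A cleaner route is probably to work integrally first, applying the classical transfer to the filtered $R$-algebra $\h{U_{\A_i}(\pi^n\L_i)}$, whose graded ring is the commutative Gorenstein $\h{\Sym_{\A_i}(\pi^n\L_i)}$, to deduce Auslander--Gorensteinness of injective dimension at most $\dim A + r + 1$; inverting the regular element $\pi$ then lowers the injective dimension by exactly one, yielding the required bound $\dim A + r$ for $\hK{U_{\A_i}(\pi^n\L_i)}$.
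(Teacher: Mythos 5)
Your overall strategy mirrors the paper's: reduce to a Gorenstein formal model via Proposition~\ref{GorensteinFormalModel}, establish the statement there by a filtered-to-graded transfer, and descend via faithful flatness and Lemma~\ref{AGfflat}. However, the middle step has a genuine gap which you half-acknowledge but do not resolve.

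The filtration you invoke is the wrong one. After $\pi$-adic completion the order (PBW) filtration on $U_{\A_i}(\pi^n\L_i)$ is no longer exhaustive on $\h{U_{\A_i}(\pi^n\L_i)}$, so it does not make sense to say its associated graded is $\h{\Sym_{\A_i}(\pi^n\L_i)}$, and in particular the classical Levasseur transfer cannot be applied to this filtration on the completed ring. The paper instead uses the \emph{doubly filtered} structure of \cite[Lemma 3.7]{AW13}: one combines the $\pi$-adic and order filtrations into a single filtration on the $K$-algebra $\hK{U(\pi^n\L)}$, whose associated graded is $U(\L)/\pi U(\L)$ — not $\Sym$ of anything, and not a completion. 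One shows $U(\L)/\pi U(\L)$ is Auslander--Gorenstein of dimension at most $\dim A + r$ by first applying Lemma~\ref{CompleteEnvAlg} to get that $U(\L)$ is Auslander--Gorenstein of dimension at most $\dim\A + r = \dim A + 1 + r$, then quotienting by the regular normal element $\pi$ (citing ASZ2), and finally applies the complete filtered-ring result \cite[Theorem 3.9]{Bj89}. This routes around both of the problems in your sketch: the associated graded is a genuine Noetherian ring (so no issues with Banach graded pieces), and no $\pi$-inversion step is needed. Your alternative route of deducing that inverting $\pi$ drops the injective dimension by exactly one is not a general fact and would require its own proof; the general statement is only that localisation does not increase injective dimension.

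Two further smaller issues. Your appeal to the Corollary following Lemma~\ref{fflat} is misplaced: that corollary concerns the $\pi$-torsion of $\gr M$ for a finitely generated module $M$ over a deformable algebra in the context of proving faithful flatness, not the structure of $U(\pi^n\L)$ itself — and in any case $\Sym_{\A_i}(\pi^n\L_i)$ is already $\pi$-torsionfree since it is a projective $\A_i$-module. Finally, your descent step glosses over where the integer $m$ comes from: in the paper $m$ is chosen so that each $\A_i$ is $\L$-stable and each $(\Spf\A_i)_{\rig}$ is $\pi^n\L$-accessible for all $n\geq m$ (via \cite[Proposition 7.6]{DhatOne} and the proof of \cite[Lemma 7.6(b)]{DhatOne}); only then does \cite[Theorem 4.9(b)]{DhatOne} give faithful flatness of $\hK{U(\pi^n\L)}\to\bigoplus\hK{U(\A_i\otimes_\A\pi^n\L)}$. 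Without this choice of $m$ the map need not even be well-defined as a map of Banach algebras, so the requirement on $m$ is not just a uniformity convenience but the actual content of the threshold.
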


\begin{proof} First we establish the result when $\A$ is Gorenstein with $m=0$. In this case it follows from the Lemma that $U(\L)$ is Auslander--Gorenstein of dimension at most $\dim \A+r=\dim A+1+r$. Thus $U(\L)/\pi U(\L)$ is Auslander--Gorenstein of dimension at most $\dim A+r$ by \cite[Proposition 1.3 and Proposition 2.1]{ASZ2}.\\
\\
Now $\hK{U(\pi^n\L)}$ is a complete doubly filtered $K$-algebra and \[\Gr(\hK{U(\pi^n\L)})\cong U(\L)/\pi U(\L)\] by \cite[Lemma 3.7]{AW13}. Thus $\hK{U(\pi^n\L)}$ is Auslander--Gorenstein of dimension at most $\dim A+r$ by \cite[Theorem 3.9]{Bj89}.\\
\\
In the general case Proposition \ref{GorensteinFormalModel} shows that there is an admissible formal blowing-up $\X\to \Spf(\A)$ over $R$ with $\X$ Gorenstein. Let $\{\Spf(\A_i)\}$ be an affine cover of $\X$. By \cite[Proposition 7.6]{DhatOne} and the proof of \cite[Lemma 7.6(b)]{DhatOne} there is a positive integer $m$ such that each $\A_i$ is $\L$-stable and each $(\Spf \A_i)_{rig}$ is a $\pi^n\L$-accessible subdomain of $X$ for each $n\geq m$. Thus each natural map \[ \hK{U(\pi^n\L)}\to \bigoplus \hK{U(\A_i\otimes_{\A}\pi^n\L)}\] with $n\geq m$ is faithfully flat by \cite[Theorem 4.9(b)]{DhatOne}. The result now follows from Lemma \ref{AGfflat} and the case $\A$ is Gorenstein.
\end{proof}
This proves Theorem \ref{introthmA}.(i).

\section{Dimension theory for coadmissible modules}

\subsection{Review}\label{ReviewDim}
We slightly generalise the exposition of \cite[\S 8]{ST} which introduced a dimension theory for Fr\'echet--Stein algebras with the property that each member of the defining family of Banach algebras is Auslander regular of global dimension bounded by a universal constant; we relax this last condition to Auslander--Gorenstein with self-injective dimension bounded by a universal constant.\\
\\
We suppose throughout this section that $U=\invlim U_n$ is a two-sided Fr\'echet--Stein algebra.

\begin{lem}[{\cite[Lemma 8.4]{ST}}] For any coadmissible left $U$-module $M$ and any integer $l\ge 0$ the $U$-module $\Ext^l_U(M,U)$ is coadmissible with \[ \Ext^l_U(M,U)\otimes_U U_n \cong \Ext^l_{U_n}(U_n\otimes_UM,U_n) \] for any $n\in \mathbb{N}$.
\end{lem}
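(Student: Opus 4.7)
The statement is a natural generalisation of \cite[Lemma 8.4]{ST} to the Fréchet--Stein setting considered here, and my plan would closely parallel the original ST argument. The overarching idea is to compute $\Ext^l_U(M,U)$ via a resolution of $M$ that behaves well when one passes to each of the Banach algebras $U_n$ in the Fréchet--Stein defining tower, and then to apply flat base change on the level of Ext groups.

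\textbf{Step 1 (Coadmissible projective resolution).} I would first produce a resolution $P_\bullet\to M$ by coadmissible $U$-modules such that, for every $n\in\mathbb{N}$, the base change $P_{\bullet,n}:=U_n\otimes_U P_\bullet$ is a finite free resolution of $M_n:=U_n\otimes_U M$ over $U_n$. The construction is inductive: at each degree one picks a finite generating set of the relations appearing in $M_n$, and then lifts the presentation consistently up the tower by using Noetherianness of each $U_{n+1}$, the flatness of the transition map $U_{n+1}\to U_n$, and the identification $M_n\cong U_n\otimes_{U_{n+1}}M_{n+1}$ coming from coadmissibility of $M$. The resulting $P_i=\varprojlim P_{i,n}$ is itself coadmissible.

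\textbf{Step 2 (Flat base change).} For each finitely generated projective $P_{i,n+1}$ one has a canonical isomorphism
\[
\Hom_{U_{n+1}}(P_{i,n+1},U_{n+1})\otimes_{U_{n+1}}U_n \;\cong\; \Hom_{U_n}(P_{i,n},U_n),
\]
and, because $U_n$ is right flat over $U_{n+1}$, the functor $-\otimes_{U_{n+1}}U_n$ commutes with taking cohomology of the $\Hom$-complex. This yields
\[
\Ext^l_{U_{n+1}}(M_{n+1},U_{n+1})\otimes_{U_{n+1}}U_n \;\cong\; \Ext^l_{U_n}(M_n,U_n).
\]
Each $\Ext^l_{U_n}(M_n,U_n)$ is finitely generated over the Noetherian algebra $U_n$ (the hypothesis of finite self-injective dimension is irrelevant at this stage but ensures only finitely many of these groups are nonzero), so the collection $\bigl(\Ext^l_{U_n}(M_n,U_n)\bigr)_n$ is a coadmissible system, and its inverse limit is a coadmissible right $U$-module.

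\textbf{Step 3 (Identification with $\Ext^l_U(M,U)$).} Using the resolution $P_\bullet$ to compute $\Ext^l_U(M,U)$, one has $\Hom_U(P_i,U)\cong \varprojlim \Hom_{U_n}(P_{i,n},U_n)$ since $P_i$ is coadmissible and freely generated of finite rank at each level. The inverse system of complexes $\Hom_{U_n}(P_{\bullet,n},U_n)$ has surjective transition maps on cohomology by Step 2, hence satisfies Mittag--Leffler, so inverse limit commutes with cohomology and we conclude
\[
\Ext^l_U(M,U) \;\cong\; \varprojlim \Ext^l_{U_n}(M_n,U_n),
\]
with the base change formula of Step 2 then producing the claimed isomorphism after tensoring with $U_n$. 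The delicate point of the whole argument, and the one obstacle requiring real care, is Step 1: Noetherianness of each $U_n$ alone only yields individual finite free resolutions of $M_n$, and arranging them to be mutually compatible with the transition maps is the technical heart of the proof. Once this is in place, Steps 2 and 3 proceed essentially formally from flatness of $U_n\to U_{n+1}$ on the right.
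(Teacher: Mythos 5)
Your overall plan — resolve $M$ by finitely generated free $U$-modules, base-change the resolution to each $U_n$, apply flat base change for $\Ext$, and then identify the inverse limit — is exactly the scheme of \cite[Lemma 8.4]{ST}, which the paper simply cites. Your observation that the Auslander--Gorenstein hypothesis plays no role at this stage (only Noetherianity of each $U_n$ and flatness of $U_{n+1}\to U_n$ matter) is also correct and matches the paper's intent in invoking the lemma verbatim despite the weaker ambient hypotheses.

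The description of Step 1 is the part that deviates from the source, and where your argument as written would not quite go through. You propose to build the compatible free resolutions $(P_{\bullet,n})$ inductively by picking generating relations in $M_n$ and ``lifting the presentation up the tower.'' But the transition maps $M_{n+1}\to M_n$, and more to the point $U_{n+1}\to U_n$, only have dense image; they are not surjective. So generators chosen at level $n$ need not lift to level $n+1$, and the bottom-up inductive construction you sketch does not obviously cohere. ST sidestep this entirely: their Corollary 3.4 shows that every coadmissible $U$-module is finitely generated over $U$ itself, and that kernels and cokernels of maps between coadmissible modules are again coadmissible (coadmissible modules form an abelian category in which $U$ is ``coherent''). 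One therefore constructs a resolution $\cdots\to U^{r_1}\to U^{r_0}\to M\to 0$ directly over $U$ (this is \cite[Lemma 8.2]{ST}); since $U_n\otimes_U-$ is exact on coadmissible modules, $U_n^{r_\bullet}\to M_n$ is automatically a finite free resolution, with no lifting required. Working from $U$ downward rather than from $U_0$ upward is the cleaner and, I believe, the only workable route here.

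There is also a small imprecision in Step 3: the transition maps $\Ext^l_{U_{n+1}}(M_{n+1},U_{n+1})\to\Ext^l_{U_n}(M_n,U_n)$ are not surjective; Step 2 shows they become isomorphisms after $\otimes_{U_{n+1}}U_n$, which gives dense image, not surjectivity. The Mittag--Leffler property must therefore be invoked in its topological form for Banach modules with dense transition maps (this is ST's ``Theorem B'' machinery from their \S 3), rather than the naive algebraic form. With that correction the conclusion $\Ext^l_U(M,U)\cong\varprojlim\Ext^l_{U_n}(M_n,U_n)$ and the base-change identification follow as you describe.
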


\begin{defn} We say that $U$ is \emph{coadmissibly Auslander--Gorenstein} (or \emph{c-Auslander--Gorenstein}) of dimension at most $d$ if $d$ is a non-negative integer such that each $U_n$ is Auslander--Gorenstein with self-injective dimension at most $d$. 
\end{defn}

It follows easily from the Lemma that if $A$ is c-Auslander--Gorenstein of dimension at most $d$ then every coadmissible $A$-module $M$ satisfies Auslander's condition; that is for every integer $l\ge 0$ every coadmissible submodule $N$ of $\Ext^l_A(M,A)$ has grade at least $l$. It is also an easy consequence that every (non-zero) coadmissible $A$-module has grade at most $d$. \\
\\
If $A$ is a smooth $K$-affinoid algebra and $L$ is a $(K,A)$-Lie algebra that admits a smooth lattice of rank $r$ then $\w{U(L)}$ is c-Auslander--Gorenstein of dimension at most $\dim A+r$ by Theorem \ref{CompleteEnvAlg}. In particular if $\Der_K(A)$ admits a smooth lattice then $\w{\D}(\Sp A)$ is c-Auslander--Gorenstein of dimension at most $2\dim A$. 

\begin{defn} Suppose that $A$ is a smooth $K$-affinoid algebra and $L$ is a $(K,A)$-Lie algebra that admits a smooth lattice of rank $r$. Writing $U$ for $\w{U(L)}$, the \emph{dimension} of a (non-zero) coadmissible $U$-module $M$ is defined by \[ d_U(M):= \dim A+r - j_{U}(M). \] We will sometimes suppress the subscript $U$ and simply write $d(M)$ if this will not cause confusion.
\end{defn}

%The following Proposition is the one point our discussion diverges slightly from that in \cite{ST}. 
%\begin{prop} Suppose that $U\to V$ is a homomorphism between two Auslander--Gorenstein rings that is flat on both sides. Then for any finitely generated $U$-module $N$ there is an isomorphism \[ \Delta^l(V\otimes_U N)\cong V\otimes_U \Delta^l(N)\mbox{ for any }l\ge 0. \]  
%\end{prop}

%\begin{proof} 
%\end{proof}

\subsection{Left-right comparison}
\label{leftright}
Let $A$ be an affinoid $K$-algebra and let $L$ be a $(K, A)$-Lie algebra that admits a smooth Lie lattice of rank $r$. Recall \cite[Theorem 3.4]{DhatTwo} that there is an equivalence of categories between coadmissible left $\w{U(L)}$-modules and coadmissible right $\w{U(L)}$-modules, given by $\Omega_L\otimes_A-$ and $\Hom_A(\Omega_L, -)$, where 
\begin{equation*}
\Omega_L=\Hom_A\left(\bigwedge\nolimits^r L, A\right).
\end{equation*}  

\begin{lem} For each coadmissible left $\w{U(L)}$-module $M$ there is a natural isomorphism \[\Ext^j_{\w{U(L)}}(\Omega_L\otimes_A M, \w{U(L)})\cong \Hom_A({\Omega_L}, \Ext^j_{\w{U(L)}}(M,\w{U(L)})\ \forall j\geq 0. \] In particular $d(M)=d(\Omega_L\otimes_A M)$. \end{lem}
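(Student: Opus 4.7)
Since $\Omega_L = \Hom_A(\bigwedge^r L, A)$ is a line bundle over $A$ (as $L$ is smooth of rank $r$), the side-changing functor $\Omega_L \otimes_A -$ is an exact equivalence from left to right $\w{U(L)}$-modules with quasi-inverse $\Hom_A(\Omega_L, -)$, and both functors preserve projectivity. The plan is to fix a projective resolution $P^\bullet \to M$ of $M$ as a left $\w{U(L)}$-module, so that $\Omega_L \otimes_A P^\bullet \to \Omega_L \otimes_A M$ becomes a projective resolution of the right $\w{U(L)}$-module $\Omega_L \otimes_A M$, and to compute both sides of the claimed isomorphism using these resolutions.

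The crux is then a natural isomorphism of cochain complexes of left $\w{U(L)}$-modules
\[\Hom_{\w{U(L)}}(\Omega_L \otimes_A P^\bullet, \w{U(L)}) \cong \Hom_A(\Omega_L, \Hom_{\w{U(L)}}(P^\bullet, \w{U(L)})).\]
Here the left-hand side inherits its left $\w{U(L)}$-structure from the left action on the target $\w{U(L)}$, while on the right-hand side the inner $\Hom_{\w{U(L)}}(P^\bullet, \w{U(L)})$ is a right $\w{U(L)}$-module via the right action on $\w{U(L)}$, and $\Hom_A(\Omega_L, -)$ then promotes this to a left module via the side-changing equivalence. I plan to construct the isomorphism as the composite of the adjunction coming from the equivalence of categories, namely
\[\Hom_{\w{U(L)}}(\Omega_L \otimes_A P, \w{U(L)}) \cong \Hom_{\w{U(L)}}\bigl(P, \Hom_A(\Omega_L, \w{U(L)})\bigr),\]
with a formal swap of $\Hom$-arguments. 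Matching the left $\w{U(L)}$-action on the two ends reduces to an explicit Leibniz-type identity involving the right $\w{U(L)}$-action on $\Omega_L$; this bookkeeping is the main technical step.

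Taking $H^j$ then yields the desired Ext isomorphism: since $\Omega_L$ is a finitely generated projective $A$-module, $\Hom_A(\Omega_L, -)$ is exact and commutes with cohomology. For the dimension claim, invertibility of $\Omega_L$ ensures that $\Hom_A(\Omega_L, N) \cong \Omega_L^{-1} \otimes_A N$ vanishes precisely when $N = 0$, so $\Ext^j_{\w{U(L)}}(\Omega_L \otimes_A M, \w{U(L)}) = 0$ if and only if $\Ext^j_{\w{U(L)}}(M, \w{U(L)}) = 0$. Hence $j(\Omega_L \otimes_A M) = j(M)$ and so $d(\Omega_L \otimes_A M) = d(M)$. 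The hard part, as noted, is arranging that the two a priori distinct left $\w{U(L)}$-module structures entering the key isomorphism of complexes correspond under the composite adjunction.
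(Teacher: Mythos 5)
Your approach is essentially the one the paper takes: both proofs amount to the derived tensor--Hom adjunction applied to the invertible $A$-module $\Omega_L$, and both use projectivity of $\Omega_L$ over $A$ to pass from a chain-level statement to an Ext-level one. The paper simply states that, since $\Omega_L$ is $A$-projective, the left-hand side is $H^j$ of $\mathrm{R}\Hom_{\w{U(L)}}(\Omega_L\otimes^{\mathbb{L}}_A M,\w{U(L)})$ and the right-hand side is $H^j$ of $\mathrm{R}\Hom_A(\Omega_L,\mathrm{R}\Hom_{\w{U(L)}}(M,\w{U(L)}))$, and then cites the derived tensor--Hom adjunction (Kashiwara--Schapira, Theorem 18.6.4(vii)) to identify the two complexes. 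Your dimension argument at the end (invertibility of $\Omega_L$ forces $\Hom_A(\Omega_L,N)=0$ iff $N=0$) is exactly the paper's.

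The genuine gap is the part you yourself flag as ``the main technical step'' and ``the hard part'': you never actually carry out the swap
\[
\Hom_{\w{U(L)}}\bigl(P,\Hom_A(\Omega_L,\w{U(L)})\bigr)\;\cong\;\Hom_A\bigl(\Omega_L,\Hom_{\w{U(L)}}(P,\w{U(L)})\bigr),
\]
nor verify that the two left $\w{U(L)}$-structures on the ends agree. This is not a ``formal swap'': the right $\w{U(L)}$-module structure on $\Omega_L\otimes_A P$ is the side-changing structure (a Lie-derivative formula), not the naive one, so the compatibility you defer really is the whole content of the lemma. Without either doing that bookkeeping explicitly or citing a clean derived-category statement that packages it (as the paper does with Kashiwara--Schapira), the proposal stops short of a proof. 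If you want to keep the chain-level route, the missing ingredient is precisely the observation that $\Omega_L$ is an $(A,\w{U(L)})$-bimodule for the side-changed right action and that the tensor--Hom adjunction for bimodules then applies; spelling that out would close the gap.
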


\begin{proof}
As $\Omega_L$ is a projective $A$-module, the left hand side is the $j$th cohomology of $\mathrm{R}\Hom_{\w{U(L)}}(\Omega_L\otimes^{\mathbb{L}}_AM, \w{U(L)})$, while the right hand side is the $j$th cohomology of $\mathrm{R}\Hom_A(\Omega_L, \mathrm{R}\Hom_{\w{U(L)}}(M, \w{U(L)})$. The natural isomorphism thus follows directly from the derived tensor-Hom adjunction \cite[Theorem 18.6.4.(vii)]{KS}.\\
\\
Because $\Omega_L$ is an invertible $A$-module, 
\begin{equation*}
\Ext^j_{\w{U(L)}}(M,\w{U(L)})=0 \ \text{if and only if}\ \Hom_A(\Omega_L, \Ext^j_{\w{U(L)}}(M, \w{U(L)}))=0
\end{equation*}
for each $j\geq 0$, and hence $d(M)=d(\Omega_L\otimes_A M)$.
\end{proof}
\subsection{Dimension theory for \ts{\w{\sU(\sL)}}}
\label{dimtheorysheaf}
Let $\sL$ be a Lie algebroid on a smooth, equidimensional rigid $K$-analytic space $X$. Let $X_w(\sL)$ denote the set of affinoid subspaces $Y$ of $X$ such that $\sL(Y)$ admits a smooth Lie lattice. By \cite[Lemma 9.3]{DhatOne} $X_w(\sL)$ is a basis for the topology on $X$ i.e. every admissible open in $X$ has an admissible cover by objects in $X_w(\sL)$.

\begin{prop} For each $t\geq 0$ there is a functor $\sExt^t_{\w{\sU(\sL)}}(-, \w{\sU(\sL)})$ from coadmissible left $\w{\sU(\sL)}$-modules on $X$ to coadmissible right $\w{\sU(\sL)}$-modules on $X$ such that \[ \sExt^t_{\w{\sU(\sL)}}(\M, \w{\sU(\sL)})(Y)= \Ext^t_{\w{U(\sL(U))}}(\M(Y),\w{U(\sL(Y))}) \] for each coadmissible left $\w{\sU(\sL)}$-module $\M$ and each $Y\in X_w(\sL)$.
\end{prop}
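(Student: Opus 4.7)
The plan is to define $\sExt^t_{\w{\sU(\sL)}}(\M,\w{\sU(\sL)})$ basis-locally on $X_w(\sL)$ by the prescribed formula and then glue using the localisation machinery of \cite[\textsection 8-9]{DhatOne}. For each $Y\in X_w(\sL)$, the lemma recalled in \textsection\ref{ReviewDim} ensures that
\[
E_Y:=\Ext^t_{\w{U(\sL(Y))}}(\M(Y),\w{U(\sL(Y))})
\]
is a coadmissible right $\w{U(\sL(Y))}$-module, so (the right-module analogue of) $\Loc$ produces a sheaf of right modules over $\w{\sU(\sL)}|_Y$. To glue these sheaves into a global object on $X$, the key step is to establish the natural base-change isomorphism
\[
\w{U(\sL(Y'))}\,\h{\otimes}_{\w{U(\sL(Y))}}\,E_Y\;\xrightarrow{\sim}\;E_{Y'}
\]
for every inclusion $Y'\subseteq Y$ of members of $X_w(\sL)$, after which the gluing is automatic.

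To prove this isomorphism, I would choose compatible smooth Lie lattices $\L\subseteq\sL(Y)$ and $\L'\subseteq\sL(Y')$ over compatible affine formal models. For all sufficiently large $n$, the subdomain $Y'$ is $\pi^n\L$-accessible, which ensures that the maps $U_n(Y):=\hK{U(\pi^n\L)}\to U_n(Y'):=\hK{U(\pi^n\L')}$ are flat and assemble into compatible Fr\'echet--Stein presentations of $\w{U(\sL(Y))}$ and $\w{U(\sL(Y'))}$. Writing $M_n:=U_n(Y)\otimes_{\w{U(\sL(Y))}}\M(Y)$, coadmissibility of $\M$ yields $U_n(Y')\otimes_{U_n(Y)}M_n\cong U_n(Y')\otimes_{\w{U(\sL(Y'))}}\M(Y')$, and each $M_n$ is finitely presented over the Noetherian Banach algebra $U_n(Y)$. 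Classical flat base change for Ext between Noetherian rings then gives
\[
U_n(Y')\otimes_{U_n(Y)}\Ext^t_{U_n(Y)}(M_n,U_n(Y))\;\xrightarrow{\sim}\;\Ext^t_{U_n(Y')}(U_n(Y')\otimes_{U_n(Y)}M_n,\,U_n(Y'))
\]
at each level. Combining with the lemma from \textsection\ref{ReviewDim}, which identifies each side with the $n$th piece of the relevant coadmissible Ext, and taking $\invlim_n$ (which is exact on the coadmissible systems involved) produces the required base-change isomorphism.

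Once this is in place, the compatible family $\{\Loc(E_Y)\}_{Y\in X_w(\sL)}$ satisfies the cocycle condition on triple intersections, and since $X_w(\sL)$ is a basis for the topology on $X_{\rig}$ it glues to a unique sheaf $\sExt^t_{\w{\sU(\sL)}}(\M,\w{\sU(\sL)})$ with the sections prescribed in the statement; functoriality in $\M$ descends level-by-level from functoriality of the ordinary $\Ext^t_{U_n}$. The main obstacle is the base-change isomorphism, since one must simultaneously control flatness of the connecting Banach maps, finite presentation at each level, and the commutation of $\invlim$ with $\Ext$. All three ingredients are available here: accessibility supplies the first, coadmissibility of $\M$ the second, and the coadmissibility of $E_Y$ provided by the lemma of \textsection\ref{ReviewDim} guarantees the Mittag--Leffler property that makes the third step exact.
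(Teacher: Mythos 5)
Your proposal is correct and follows essentially the same route as the paper: reduce to the level-$n$ Banach algebras, use flatness of the connecting maps (via accessibility of the subdomain) and finite generation of the $M_n$ to get flat base change for $\mathrm{Ext}^t$, then invoke the coadmissibility result of Lemma \ref{ReviewDim} to pass to the inverse limit and glue via the localisation machinery of \cite{DhatOne}. The only cosmetic difference is that the paper takes the Lie lattice on the subdomain to be the explicit image lattice $\B\otimes_{\A}\pi^n\L$ rather than an independently chosen compatible $\L'$, and cites the proof of \cite[Lemma 7.3]{DhatOne} for the identification of the limit with the completed tensor product where you appeal to Mittag--Leffler; both resolve the same point.
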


\begin{proof}
Let $\M$ be a coadmissible $\w{\sU(\sL)}$-module and suppose that $Z\subset Y$ are in $X_w(\sL)$. Let $\A$ be an affine formal model in $\O(Y)$ such that $\sL(Y)$ admits a smooth Lie lattice $\L$. By replacing $\L$ by $\pi^m\L$ for some positive integer $m$, we may assume that $Z$ is $\pi^n\L$-accessible for all $n\geq 0$ (see \cite[Proposition 7.6]{DhatOne}).\\
Let $\B$ be an affine formal model in $\O(Z)$ such that $\B\otimes_{\A}\L$ is a $\B$-Lie lattice in $\sL(Z)=B\otimes_A\sL(Y)$.\\
\\
For each $n\geq 0$, let $U_n$ and $V_n$ denote the $K$-Banach algebras $\hK{U(\pi^n\L)}$ and $\hK{U(\B\otimes \pi^n\L)}$ respectively and let $U:=\invlim U_n=\w{U(\sL(Y))}$ and $V:=\invlim V_n=\w{U(\sL(Z))}$. Now, using \cite[Theorem 9.4]{DhatOne}, we see that $M_n(Y):=U_n\otimes_{U}\M(Y)$ is a finitely generated left $U_n$-module, $M_n(Z):=V_n\otimes_{V}\M(Z)$ is a finitely generated left $V_n$-module and $M_n(Z)\cong V_n\otimes_{U_n}M_n(Y)$. Morever by \cite[Theorem 4.8]{DhatOne} $U_n\to V_n$ is flat on both sides for all positive integers $n$. Thus $\Ext^t_{V_n}(M_n(Z),V_n)\cong \Ext^t_{U_n}(M_n(Y),U_n)\otimes_{U_n}V_n$ for each $t\geq 0$. \\
\\
By Lemma \ref{ReviewDim}, $\Ext^t_{U}(\M(Y),U)$ is a coadmissible right $U$-module such that 
\begin{equation*}
\Ext^t_{U}(\M(Y),U)\otimes_{U}U_n\cong \Ext^t_{U_n}(M_n(Y),U_n)
\end{equation*} 
and $\Ext^t_{V}(\M(Z),V)$ is a coadmissible $V$-module with $\Ext^t_{V}(\M(Z),V)\otimes_{V}V_n\cong \Ext^t_{V_n}(M_n(Z),V_n)$. Thus we can compute \begin{eqnarray*}\Ext^t_V(\M(Z),V)\otimes V_n & \cong & \Ext^t_{V_n}(M_n(Z),V_n) \\
                                                              & \cong & \Ext^t_{U_n}(M_n(Y),U_n)\otimes_{U_n}V_n\\
                                                              & \cong & \Ext^t_U(\M(Y),U)\otimes_U V_n \end{eqnarray*}
Now we see that $\Ext^t_V(\M(Z),V)\cong \Ext^t_U(\M(Y),U)\w\otimes_U V$ by (the proof of) \cite[Lemma 7.3]{DhatOne}. It follows, using \cite[Theorem 8.2, Theorem 8.4]{DhatOne} that the presheaf on $Y_w$ that sends $Z\in Y_w$ to $\Ext^t_{\w{U(\sL(Z))}}(\M(Z),\w{U(\sL(Z))})$ is a coadmissible sheaf of $\w{\sU(\sL)}$-moudules on $Y_w$. Applying \cite[Theorem 9.1, Theorem 9.4]{DhatOne} completes the proof of the Proposition.  
\end{proof}
The analogous statement for coadmissible right modules can be proven using the same argument.
\begin{defn} Suppose that $\mathcal{U}$ is an admissible cover of $X$ by afffinoid subspaces in $X_w(\sL)$. For each (non-zero) coadmissible $\w{\sU(\sL)}$-module $\M$ we define the \emph{dimension} of $\M$ with respect to $\mathcal{U}$ by \[ d_{\mathcal{U}}(\M)= \sup \{d(\M(Y))\st Y\in \mathcal{U}\}.\]\end{defn}

\begin{lem} Suppose that $\mathcal{U}$ and $\mathcal{V}$ are two admissible covers of $X$ by affinoid subspaces in $X_w(\sL)$. Then $d_\mathcal{U}(\M)= d_{\mathcal{V}}(\M)$. 
\end{lem}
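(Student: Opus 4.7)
The plan is to reinterpret $d_{\mathcal{U}}(\M)$ entirely in terms of the coadmissible Ext sheaves $\mathcal{N}_t := \sExt^t_{\w{\sU(\sL)}}(\M, \w{\sU(\sL)})$ constructed in the preceding Proposition, thereby eliminating the cover from the picture.

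Working on each connected component of $X$ separately (as permitted by the equidimensionality convention), we may assume $X$ is connected, so that $D := \dim X + r$ is constant, where $r$ denotes the rank of $\sL$. By the Proposition, $\mathcal{N}_t$ is a coadmissible sheaf of right $\w{\sU(\sL)}$-modules on $X$, and for every $Y \in X_w(\sL)$ we have
\[ \mathcal{N}_t(Y) = \Ext^t_{\w{U(\sL(Y))}}(\M(Y), \w{U(\sL(Y))}). \]
Substituting this identification into the definition of $d_{\mathcal{U}}$ and using $d(\M(Y)) = D - j(\M(Y))$, we obtain
\[ d_{\mathcal{U}}(\M) = D - \min\{t \ge 0 : \mathcal{N}_t(Y) \neq 0 \text{ for some } Y \in \mathcal{U}\}. \]

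The substantive content of the lemma then reduces to the observation that a coadmissible sheaf $\mathcal{N}$ on $X$ vanishes if and only if $\mathcal{N}(Y) = 0$ for every $Y$ in some (equivalently, any) admissible affinoid cover drawn from $X_w(\sL)$. This follows directly from the construction via localisation in \cite[Theorem 9.4]{DhatOne}: on each such $Y$ we have $\mathcal{N}|_Y = \Loc(\mathcal{N}(Y))$, which is identically zero as soon as $\mathcal{N}(Y) = 0$, and admissibility of the cover then forces $\mathcal{N} = 0$. Applied to each $\mathcal{N}_t$ in turn, this shows that the minimum above equals $\min\{t \ge 0 : \mathcal{N}_t \neq 0\}$, an invariant of $\M$ that makes no reference to $\mathcal{U}$; performing the same computation with $\mathcal{V}$ yields $d_{\mathcal{U}}(\M) = d_{\mathcal{V}}(\M)$.

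The only real obstacle is confirming the sheaf-theoretic vanishing criterion in exactly the form required, but the Proposition was already established via the gluing results \cite[Theorems 8.2, 9.1]{DhatOne}, so the same machinery handles this step. As an alternative route, one can pass to the common refinement $\{U \cap V : U \in \mathcal{U},\ V \in \mathcal{V}\}$, re-covered by members of $X_w(\sL)$, and use the isomorphism $\Ext^t_V(\M(Z), V) \cong \Ext^t_U(\M(Y), U) \w\otimes_U V$ from the proof of the Proposition together with faithful flatness of the connecting maps to deduce that $j$ is monotone non-decreasing under restriction to affinoid subdomains; the resulting monotonicity inequalities applied to the common refinement then sandwich both $d_{\mathcal{U}}(\M)$ and $d_{\mathcal{V}}(\M)$ between the same bounds.
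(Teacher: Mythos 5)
Your main argument is correct, and it takes a genuinely different route from the paper. The paper proceeds concretely: it first reduces to the case where $\mathcal{V}$ refines $\mathcal{U}$ (using quasi-separatedness to build a common refinement inside $X_w(\sL)$), and then, for $Y\in\mathcal{U}$ covered by $Z_1,\dots,Z_k\in\mathcal{V}$, invokes the c-faithful flatness of $\w{U(\sL(Y))}\to\bigoplus_i\w{U(\sL(Z_i))}$ together with coadmissibility of the Ext modules on $Y$ to establish $j(\M(Y))=\inf\{j(\M(Z_i)):\M(Z_i)\neq 0\}$, and finishes via equidimensionality. You instead package the cover-independence into the statement of the preceding Proposition: since the sheaves $\mathcal{N}_t=\sExt^t_{\w{\sU(\sL)}}(\M,\w{\sU(\sL)})$ are globally defined coadmissible sheaves on $X$, the identity $d_{\mathcal{U}}(\M)=D-\min\{t:\mathcal{N}_t\neq 0\}$ makes the cover vanish from the right-hand side, the vanishing criterion for coadmissible sheaves (a coadmissible sheaf is zero iff its sections over the members of some, equivalently any, admissible affinoid cover from $X_w(\sL)$ vanish) being a direct consequence of $\mathcal{N}_t|_Y\cong\Loc\mathcal{N}_t(Y)$ and the sheaf axiom. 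This buys you a cleaner, one-pass argument that never mentions refinements, at the cost of relying on the Proposition's globalisation in a slightly stronger way; the paper prefers to keep the argument local (Ext sheaves on $Y$ rather than on all of $X$) and make the flatness input explicit. One small remark: your alternative route in the last paragraph is underspecified as written — monotonicity of $j$ under restriction alone gives only $d_{\mathcal{W}}\leq d_{\mathcal{U}}$ and $d_{\mathcal{W}}\leq d_{\mathcal{V}}$ for the common refinement $\mathcal{W}$; the reverse inequalities still need the c-faithful flatness of $\w{U(\sL(Y))}\to\bigoplus_i\w{U(\sL(Z_i))}$ applied to the coadmissible Ext module, which is exactly the missing step that the paper supplies. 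Your main argument does not have this gap.
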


\begin{proof} Since $X$ is quasi-separated, for any pair $Y\in \mathcal{U}$ and $Z\in \mathcal{V}$ that does not intersect trivially we can cover $Y\cap Z$ by a finite set of affinoid subspaces in $X_{w}(\L)$. Thus we may reduce to the case that $\V$ is a refinement of $\U$ and every element of $\U$ has an admissible cover by elements of $\V$. \\
\\
Now suppose $Y\in \mathcal{U}$ is covered by $Z_1,\ldots,Z_k\in \mathcal{V}$. Then by the Proposition $\sExt_{\w{\sU(\sL|_Y)}}^j(\M|_Y, \w{\sU(\sL|_Y)})$ is a coadmissible $\w{\sU(\sL|_Y)}$-module for each $j\geq 0$. Since $\w{U(\sL(Y))} \to \oplus_{i=1}^k \w{U(\sL(Z_i))}$ is c-faithfully flat by \cite[Theorem 7.7(b)]{DhatOne} it follows that $j(\M(Y))=\inf \{j(\M(Z_i))| \M(Z_i)\neq 0\}$.   \\
As $\dim Y=\dim Z_i$ for each $i$ by equidimensionality, the result follows.
\end{proof}

It follows that we may define the dimension of $\M$ by $d(\M)=d_{\mathcal{U}}(\M)$ for any choice $\U$ of admissible cover of $X$ by affinoid subspaces in $X_w(\sL)$.

\section{Bernstein's inequality}

\subsection{Dimension and pushforward along a closed embedding}\label{Kashiwaradim}
Let $\iota: Y\to X$ be a closed embedding of smooth rigid analytic $K$-spaces. In \cite{DhatTwo}, the first and the third author produced a functor
\begin{equation*}
\iota_+: \C_Y\to \C_X.
\end{equation*}
The construction of $\iota_+$ rests on the case when $\iota: Y=\Sp A/I\to \Sp A=X$ and $L=\T(X)$ admits an $I$-\emph{standard basis}, i.e. there exists an $A$-basis $\{x_1, \dots, x_d\}$ for $L$ and a generating set $\{f_1, \dots, f_r\}$ for $I$ with $r\leq d$ such that $x_i\cdot f_j=\delta_{ij}$ for any $1\leq i\leq d$, $1\leq j\leq r$. \\
Any closed embedding of smooth rigid analytic $K$-varieties is locally of this form by \cite[Theorem 6.2]{DhatTwo}, and we have
\begin{equation*}
\iota_+\M(X)=\M(Y)\underset{\w{\D}(Y)}{\w{\otimes}} \w{U(L)}/I\w{U(L)}
\end{equation*}
for any coadmissible right $\w{\D}_Y$-module $\M$ -- the corresponding functor for left modules is obtained via side-changing operations.\\
We refer to \cite{DhatTwo} for details.\\
\\
We will now show that $\iota_+$ respects our dimension function in a natural way, allowing us to reduce many statements about modules on smooth affinoid spaces to the corresponding statements on polydiscs. 

\begin{lem}
Suppose that $A$ is a smooth affinoid $K$-algebra with affine formal model $\A$, and $\L$ is a smooth $(R, \A)$-Lie algebra. Let $f\in \A$ such that $\L\cdot f=\A$, and write $\C=C_{\L}(f)=\{x\in \L: x\cdot f=0\}$. Let $I=(fA)\cap \A$.
\begin{enumerate}[(i)]
\item $B:=A/fA$ is a smooth affinoid algebra with affine formal model $\B:=\A/I$, and $\C/I$ is a smooth $(R, \B)$-Lie algebra.
\item If $\Sp A$ is connected, then $f$ is a regular central element in $\hK{U_{\A}(\C)}$.
\end{enumerate}
\end{lem}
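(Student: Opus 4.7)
My plan is to split the lemma into its two parts and treat them separately.

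For part (i), I would verify three assertions: (a) smoothness of $B = A/fA$; (b) that $\B = \A/I$ is an affine formal model of $B$; and (c) the Lie--Rinehart structure on $\C/I\C$. For (a), the Jacobian criterion applies because the hypothesis $\L \cdot f = \A$ forces the differential $df$ to be nowhere vanishing on the zero locus of $f$ in $\Sp A$; combined with smoothness of $A$ this gives smoothness of $B$. For (b), Noetherianness of $\A$ ensures that $I$ is finitely generated and $\pi$-adically closed, so $\B$ is topologically finitely presented and $\pi$-adically complete; the identity $I[1/\pi] = fA$ is straightforward and yields $\B \otimes_R K = B$; and $\pi$-torsionfreeness of $\B$ follows from the observation that $\pi x = fh$ with $h \in A$ implies $x = f(h/\pi) \in fA \cap \A = I$. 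For (c), the $\A$-linear map $\L \to \A,\ x \mapsto x(f)$ is a split surjection, so $\C$ is a projective $\A$-summand of $\L$; the Lie bracket restricts to $\C$ since $[x, y](f) = 0$ for $x, y \in \C$; and $\C$ preserves $I$ since $x(fh) = fx(h) \in fA \cap \A = I$ for $x \in \C$ and $h \in A$. The bracket relation $[x, ay] = a[x,y] + x(a)y$ together with $x(a) \in I$ for $a \in I$ then shows that $I\C$ is a Lie ideal, so the bracket and anchor descend to the projective $\B$-module $\C/I\C = \C \otimes_\A \B$.

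For part (ii), I would first establish centrality of $f$ via the Lie--Rinehart commutator $[x, f] = \rho(x)(f) = x(f) = 0$ for $x \in \C$, combined with the commutativity of $\A$; since $\A \cup \C$ generates $U_\A(\C)$, $f$ is central, and extends to be central in $\hK{U_\A(\C)}$ by continuity. For regularity, my plan is to reduce to regularity of $f$ in $A$: since $\Sp A$ is connected and smooth, $A$ is irreducible, hence a domain, and $f \neq 0$ (because $1 \in \L \cdot f$), so $f$ is regular in $A$.

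To transfer regularity to $\hK{U_\A(\C)}$, I would argue as follows. When $\C$ is free over $\A$ with $\A$-basis $x_1, \ldots, x_{d-1}$, PBW identifies the underlying $A$-Banach module of $\hK{U_\A(\C)}$ with the commutative Tate algebra $A\langle x_1, \ldots, x_{d-1}\rangle$; centrality of $f$ means that left multiplication by $f$ acts coefficient-wise on a PBW expansion $\sum_\alpha a_\alpha x^\alpha$, and injectivity is then immediate from $f$ being regular in $A$. In the general projective case, I would pass to an affinoid cover $(\Sp A_i)$ of $\Sp A$ trivializing $C := \C \otimes_R K$ and invoke faithful flatness of the natural map $\hK{U_\A(\C)} \to \bigoplus_i \hK{U_{A_i}(A_i \otimes_A C)}$ (modelled on the Corollary following Theorem \ref{fflat}) to descend the injectivity from the free pieces back to $\hK{U_\A(\C)}$. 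The main obstacle is precisely this descent step: matching the fixed Lie lattice $\C$ with a compatible cover, without being forced to rescale to $\pi^n \C$, requires some technical care. Fortunately, in the principal application (the Kashiwara-type equivalence discussed at the start of the section), $\L$ admits an $I$-standard basis, so $\C$ is automatically free and this technicality is avoided.
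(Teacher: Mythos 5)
Part (i) is essentially right and runs parallel to the paper, which also splits $\L=\A x\oplus\C$ (via \cite[Lemma 4.1]{DhatTwo}) and uses the second fundamental exact sequence to get projectivity of $\Der_K(B)$; your version is just more self-contained. For part (ii), the centrality argument is identical. For regularity of $f$ in $A$, your route is genuinely different and simpler: you note that a connected smooth affinoid is a domain, so $f\neq 0$ is automatically regular. The paper instead shows $\Ann_A(f^r)$ is stable under derivations, hence a $\w\D(X)$-submodule of $A$, and invokes simplicity of $A$ as a $\w\D$-module (\cite[Proposition 7.4]{DhatTwo}); your argument is more elementary and avoids the $\D$-module machinery, while the paper's keeps the proof in the spirit of the theory.

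Where you have a genuine gap is the transfer of regularity from $A$ to $\hK{U_\A(\C)}$ when $\C$ is projective but not free. Your proposed faithful-flatness descent via an affinoid cover of $\Sp A$ is problematic for exactly the reason you flag: the subdomains have their own formal models and Lie lattices, and the completed localization $\hK{U_{\A_i}(\A_i\otimes\pi^n\C)}$ you get from \cite[Theorem 4.9]{DhatOne} need not be flat over $\hK{U_\A(\C)}$ at the level $n=0$, so you cannot simply test injectivity of multiplication by $f$ after base change to the cover without rescaling the lattice. Retreating to "the application only needs the free case" is not a fix, since the Proposition that follows applies this lemma to the lattices $C_{\L_{i-1}}(f_i)$, which are only assumed smooth. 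The paper's way around this is cleaner and purely module-theoretic: by Rinehart's theorem $\hK{U_\A(\C)}\cong\hK{\Sym_\A\C}$ as $A$-modules (only the $A$-module structure matters for testing $fP\neq 0$); since $\C$ is a finitely generated projective $\A$-module it is a direct summand of a finite free $\L'$, hence $\Sym_\A\C$ is an $\A$-direct summand of $\Sym_\A\L'$, and this direct-sum decomposition survives $\pi$-adic completion and inverting $\pi$. Thus $\hK{\Sym\C}$ embeds $A$-linearly into $\hK{\Sym\L'}\cong A\langle y_1,\dots,y_m\rangle$, and regularity of $f$ there follows coefficient-wise from regularity in $A$, exactly as in your free case. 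Replacing your descent step with this direct-summand embedding closes the gap and gives the statement in full generality.
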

\begin{proof}
By assumption, there exists $x\in \L$ such that $x\cdot f=1$. Applying the anchor map, we also have $\partial\in \mathrm{Der}_K(A)$ such that $\partial\cdot f=1$, and by \cite[Lemma 4.1]{DhatTwo}
\begin{equation*}
\mathrm{Der}_K(A)=A\partial\oplus C,
\end{equation*}
where $C=\{\xi\in \mathrm{Der}(A): \xi\cdot f=0\}$. In particular, it follows from smoothness that $C$ is a projective $A$-module. \\
By the second fundamental exact sequence (\cite[Proposition 1.2]{BLR3}), we have 
\begin{equation*}
\mathrm{Der}_K(B)=C/(f),
\end{equation*}
which is projective, so $B$ is smooth. Moreover, $\B$ is easily seen to be an affine formal model.\\
\\
In the same vein, applying \cite[Lemma 4.1]{DhatTwo} to $\L$ yields
\begin{equation*}
\L=\A x\oplus \C,
\end{equation*}
so $\C$ is projective over $\A$, and the $(R, \B)$-Lie algebra $\C/I$ is also a projective $\B$-module.\\
\\
For (ii), note that $f$ is central in $\hK{U(\C)}$ by definition of $\C$. So it suffices to show that $f\cdot P\neq 0$ for any non-zero $P\in \hK{U(\C)}$. In particular, this only depends on the $A$-module structure of $\hK{U(\C)}$. But it follows from \cite[Theorem 3.1]{Rinehart} that 
\begin{equation*}
\hK{U(\C)}\cong \hK{\Sym_{\A}(\C)}
\end{equation*}
as $A$-modules. Since $\C$ is a projective finitely generated $\A$-module, it is a direct summand of a free $\A$-module of finite rank, $\L'$. By functoriality, $\hK{\Sym \C}$ embeds into $\hK{\Sym \L'}$, so we are done if we can show that $f$ is regular in $A$.\\
\\
To show that $f$ is regular in $A$, consider the annihilators
\begin{equation*}
\Ann_A(f)\subseteq \Ann_A(f^2)\subseteq \dots
\end{equation*}
By Noetherianity of $A$, this chain of ideals stabilizes, so that 
\begin{equation*}
\Ann(f^r)=\Ann(f^{r+1})=\dots
\end{equation*} 
for some integer $r$.\\
As $X=\Sp A$ is smooth, we can consider the Fr\'echet--Stein algebra $D=\w{\D}(X)$. We claim that $\Ann_A(f^r)$ is a $D$-submodule of $A$, from which $\Ann(f)\subseteq \Ann(f^r)=\{0\}$ follows because of \cite[Proposition 7.4]{DhatTwo} (note that $\Ann(f^r)\neq A$ as $f^r\neq 0$ by reducedness of $\Sp A$).\\
If $a\in \Ann(f^r)$ and $\xi\in \mathrm{Der}(A)$, then
\begin{equation*}
\xi(a)\cdot f^{r+1}=\xi(af^{r+1})-af^r\xi(f)=0,
\end{equation*}
so that $\xi(a)\in \Ann(f^{r+1})=\Ann(f^r)$. By \cite[Theorem 7.3]{DhatTwo}, it follows that $\Ann(f^r)$ is a $D$-submodule of $A$, as required.
\end{proof}
\begin{prop} 
Suppose that $A$ is a smooth, connected affinoid algebra with affine formal model $\A$ and $\L$ is a smooth $(R,\A)$-Lie algebra. Let $F=\{f_1,\ldots,f_r\}$ be a subset of $A$ such that $\L\cdot(f_1,\ldots,f_r)=\A^r$ and write $\C=C_\L(F)=\{x\in \L: x\cdot f=0 \ \forall f\in F\}$. If $M$ is a finitely generated $\hK{U(\C)}/(F)$-module then \[  j_{\hK{U(\L)}}(\hK{U(\L)}\otimes_{\hK{U(\C)}}M)=j_{\hK{U(\C)}/(F)}(M)+r.\]
\end{prop}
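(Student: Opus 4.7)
The plan is to prove the proposition by induction on $r$, with the base case $r=1$ containing the key computation.

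For the inductive step ($r\ge 2$), I apply the preceding lemma to $f_1$ to obtain the decomposition $\L = \A x_1 \oplus \C_1$ with $\C_1 = C_\L(f_1)$. After observing (by projecting the elements $x_j$ furnished by the hypothesis $\L\cdot(f_1,\ldots,f_r)=\A^r$ along $x_1$) that the Lie algebroid $\C_1/I_1\C_1$ over $\A/I_1$, equipped with $\{f_2,\ldots,f_r\}$ and centralizer $\C/I_1\C$, satisfies the hypotheses of the proposition for $r-1$ parameters, the associativity of induced modules gives
\[ \hK{U(\L)}\otimes_{\hK{U(\C)}} M \;=\; \hK{U(\L)} \otimes_{\hK{U(\C_1)}} \bigl(\hK{U(\C_1)} \otimes_{\hK{U(\C)}} M\bigr). \]
The inner tensor is annihilated by $f_1$ (since $f_1 M = 0$ and $f_1$ is central in $\hK{U(\C_1)}$), so the base case applies to the outer tensor (shift by $1$) and the inductive hypothesis applies to the inner (shift by $r-1$), yielding the desired total shift of $r$.

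For the base case $r=1$ (writing $f = f_1$), the preceding lemma provides $\L = \A x\oplus \C$ with $x \cdot f = 1$ and $f$ regular central in $\hK{U(\C)}$. Rinehart's PBW theorem extends this to identify $\hK{U(\L)} \cong \bigoplus_k \hK{U(\C)} x^k$ as a left $\hK{U(\C)}$-module (completed direct sum), and shows that $f$ acts diagonally and regularly under left multiplication on $\hK{U(\L)}$. The computation then proceeds in three parts:
\begin{enumerate}[(i)]
\item \emph{Adjunction.} Using right-flatness of $\hK{U(\L)}$ over $\hK{U(\C)}$ (another PBW consequence), tensor-Hom adjunction gives $\Ext^n_{\hK{U(\L)}}(\hK{U(\L)}\otimes_{\hK{U(\C)}} M,\hK{U(\L)}) \cong \Ext^n_{\hK{U(\C)}}(M,\hK{U(\L)})$.
\item \emph{Change of rings.} The short exact sequence $0\to\hK{U(\C)}\xrightarrow{f}\hK{U(\C)}\to\hK{U(\C)}/(f)\to 0$ and regularity of $f$ on $\hK{U(\L)}$ yield $\Ext^q_{\hK{U(\C)}}(\hK{U(\C)}/(f),\hK{U(\L)}) = \hK{U(\L)}/f\hK{U(\L)}$ for $q=1$ and zero otherwise. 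The Grothendieck change-of-rings spectral sequence then collapses to
\[ \Ext^n_{\hK{U(\C)}}(M,\hK{U(\L)}) \cong \Ext^{n-1}_{\hK{U(\C)}/(f)}(M,\hK{U(\L)}/f\hK{U(\L)}). \]
\item \emph{Identification.} The diagonal action of $f$ on the PBW decomposition gives $\hK{U(\L)}/f\hK{U(\L)} \cong \bigoplus_k \hK{U(\C)}/(f) \cdot x^k$ (completed direct sum) as left $\hK{U(\C)}/(f)$-modules. Since $\hK{U(\C)}/(f)$ is a direct summand and each summand is isomorphic to it, the grade of $M$ computed via $\hK{U(\L)}/f\hK{U(\L)}$ agrees with $j_{\hK{U(\C)}/(f)}(M)$, producing the shift by $1$.
\end{enumerate}

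The hardest part will be step (iii): making precise the topological (completed) direct sum structure of $\hK{U(\L)}/f\hK{U(\L)}$ and verifying that the grade of the finitely generated module $M$ is unchanged when the coefficient module is enlarged from $\hK{U(\C)}/(f)$ to this completed direct sum. I would handle this by descending to the defining Banach algebras $\hK{U(\pi^n\L)}$, where the PBW decomposition appears as a genuine Tate-algebra-style direct sum and the Noetherian machinery makes the grade comparison transparent, and then passing back to the Fr\'echet limit using coadmissibility as developed in \cite{DhatOne}.
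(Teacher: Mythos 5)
Your overall strategy — reduce by induction to a single one-step shift — is essentially the paper's, and the inductive bookkeeping (peeling off $f_1$ via the preceding lemma, using $\hK{U(\C_1)}/(f_1)\cong\hK{U(\L_1)}$ and associativity of the induced modules) is the same up to reorganisation. The real difference is in how the base case is handled.

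The paper's argument for the one-step shift is shorter than yours and avoids the change-of-rings spectral sequence entirely: writing $U=\hK{U(\C)}$ and $V=\hK{U(\L)}$, it uses faithfully flat base change ($U\to V$ is faithfully flat by \cite[Corollary 4.3]{DhatTwo}, so $\Ext^j_U(M,U)\otimes_UV\cong\Ext^j_V(V\otimes_UM,V)$ and hence $j_V(V\otimes_UM)=j_U(M)$), and then cites \cite[Lemma 1.1]{ASZ2} to pass from $j_U(M)$ to $j_{U/(f)}(M)+1$ using that $f$ is a regular central element. Your steps (i) and (ii) — derived adjunction $\Ext^n_V(V\otimes_UM,V)\cong\Ext^n_U(M,V)$ followed by the Rees change-of-rings collapse to $\Ext^{n-1}_{U/(f)}(M,V/fV)$ — are a valid alternative, and the regularity of $f$ on $V$ that you need is exactly what the preceding lemma supplies.

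Where your proposal has a genuine gap is step (iii). You try to identify the grade of $M$ computed against $V/fV$ with the grade against $U/(f)$ by an explicit PBW ``completed direct sum'' decomposition, and you propose to ``handle this by descending to the defining Banach algebras $\hK{U(\pi^n\L)}$ \ldots and then passing back to the Fr\'echet limit.'' But the proposition is already a statement about a single Banach algebra $\hK{U(\L)}$ — there is no Fr\'echet limit to descend from, so this plan has nowhere to go. Moreover, $\hK{U(\L)}$ is not a (genuine) direct sum $\bigoplus_k\hK{U(\C)}x^k$ but a completed one, and $\Ext^j(M,-)$ does not commute with such completions, so the hands-on summand argument you sketch would not close. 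The correct and much simpler observation is that $V$ is faithfully flat over $U$, hence $V/fV=V\otimes_UU/(f)$ is faithfully flat over $U/(f)$, so $\Ext^{n-1}_{U/(f)}(M,V/fV)\cong\Ext^{n-1}_{U/(f)}(M,U/(f))\otimes_{U/(f)}V/fV$ vanishes if and only if $\Ext^{n-1}_{U/(f)}(M,U/(f))$ does; no PBW analysis is needed. With that replacement your route works, but it then amounts to a slight rearrangement of the paper's faithfully-flat-plus-regular-element argument.
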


\begin{proof}
We first introduce some notation. For $i=0, \dots, r$, write $A_i=A/(f_1, \dots, f_i)$, $I_i=(\sum_{j=1}^i f_jA)\cap \A$, and $\A_i=\A/I_i$. Let $\C_i=C_{\L}(\{f_1, \dots, f_i\})$ and let $\L_i=\C_i/I_i$. By applying the preceeding lemma repeatedly, each $A_i$ is smooth, with affine formal model $\A_i$, and $\L_i$ is a smooth $(R, \A_i)$-Lie algebra. Note also that $\L_0=\L$ and $\L_r=\C/(F)$.\\
In fact, \cite[Lemma 4.1]{DhatTwo} gives an explicit description of $\L_i$: if $x_1, \dots, x_r\in \L$ such that $x_if_j=\delta_{ij}$, then
\begin{equation*}
\L_i=\oplus_{j=i+1}^r \A_ix_j\oplus \C/I_i.
\end{equation*}
In particular, note that the natural map 
\begin{equation*}
\L_{i}\to C_{\L_{i-1}}(f_{i})/I_i
\end{equation*}
is an isomorphism for $i=1, \dots, r$, and $\hK{U(\C)}/(F)\cong \hK{U(\L_r)}\cong \hK{U(C_{\L_{r-1}}(f_r))}/(f_r)$.\\
\\
It thus suffices to prove the following claim:\\
If $M$ is finitely generated $\hK{U(\L_i)}$-module then
\begin{equation*}
j_{\hK{U(\L_{i-1})}}\left(\hK{U(\L_{i-1})}\underset{\hK{U(C_{\L_{i-1}}(f_i))}}{\otimes}M\right)=j_{\hK{U(\L_i)}}(M)+1.
\end{equation*}
Write $U=\hK{U(C_{\L_{i-1}}(f_i))}$, $V=\hK{U(\L_{i-1})}$. Since by \cite[Corollary 4.3]{DhatTwo} $U\to V$ is faithfully flat, and $\Ext^j_{U}(M,U)\otimes_UV\cong \Ext^j_V(V\otimes_U M, V)$ we see that  $j_U(M)=j_V(V\otimes_UM)$. Thus it suffices to show that $j_U(M)=j_{U/(f_i)}(M)+1$. By the preceeding lemma, $f_i$ is a regular central element in $U$, so we can apply \cite[Lemma 1.1]{ASZ2} to conclude the proof.
\end{proof}

\begin{cor} Suppose that $I$ is an ideal in the smooth, connected affinoid $K$-algebra $A$ and let $L$ be a $(K,A)$-Lie algebra which admits an $I$-standard basis. Write $L_Y$ for the $(K,A/I)$-Lie algebra $N_L(I)/IL$. If $M$ is a (non-zero) coadmissible $\w{U(L_Y)}$-module then $d_{\w{U(L)}}(\iota_+M)= d_{\w{U(L_Y)}}(M)+ \dim A - \dim A/I$.
\end{cor}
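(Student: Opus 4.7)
My plan is to reduce the corollary, layer by layer in the Fr\'echet--Stein filtrations, to the preceding Proposition, and then pass to the inverse limit using the coadmissibility of Ext. By the side-changing operations of \cite{DhatTwo} combined with Lemma \ref{leftright}, which shows that dimension is invariant under $\Omega_L\otimes_A-$, it suffices to treat right modules, so that $\iota_+M=M\,\w{\otimes}_{\w{U(L_Y)}} \w{U(L)}/I\w{U(L)}$.

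First I set up a compatible system of lattices. Choose an affine formal model $\A$ of $A$ containing the $I$-standard generators $F=\{f_1,\ldots,f_r\}$, together with a smooth $(R,\A)$-Lie lattice $\L$ of $L$ containing dual elements $x_1,\ldots,x_r$ with $x_i\cdot f_j=\delta_{ij}$. Then $\L\cdot F=\A^r$, and the splitting of \cite[Lemma 4.1]{DhatTwo} gives $\L=\A x_1\oplus\cdots\oplus\A x_r\oplus\C$ with $\C:=C_\L(F)$ a smooth $(R,\A)$-Lie algebra; moreover $\C/I\C$ is a smooth Lie lattice of $L_Y$.

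Next I identify the deformations. Writing $U_n=\hK{U(\pi^n\L)}$, $W_n=\hK{U(\pi^n\C)}$ and $V_n=\hK{U(\pi^n\C/I\pi^n\C)}$, a comparison of associated graded algebras via Rinehart's PBW theorem yields $V_n\cong W_n/IW_n$, and the explicit description of $\iota_+$ together with the flatness of the connecting maps gives $(\iota_+M)_n\cong M_n\otimes_{W_n}U_n$, where $M_n=M\otimes_{\w{U(L_Y)}}V_n$. Rescaling $F$ to $F_n:=\pi^{-n}F$, one has $\pi^n\L\cdot F_n=\A^r$, $C_{\pi^n\L}(F_n)=\pi^n\C$, and $(F_n)=(F)$ as ideals of $W_n$ (since $\pi\in W_n^\times$). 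The right-module analogue of Proposition \ref{Kashiwaradim}, obtained by passing to opposite algebras, then gives
\begin{equation*}
j_{U_n}((\iota_+M)_n)=j_{V_n}(M_n)+r \qquad\text{for every } n\geq 0.
\end{equation*}

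Finally, Lemma \ref{ReviewDim} ensures that $\Ext^i_{\w{U(L)}}(\iota_+M,\w{U(L)})$ is coadmissible with layer-$n$ piece $\Ext^i_{U_n}((\iota_+M)_n,U_n)$, hence $j_{\w{U(L)}}(\iota_+M)=\min_n j_{U_n}((\iota_+M)_n)$, and similarly for $\w{U(L_Y)}$. Thus $j_{\w{U(L)}}(\iota_+M)=j_{\w{U(L_Y)}}(M)+r$. Substituting into the definition of $d$ and using $\mathrm{rk}_A(L)-\mathrm{rk}_{A/I}(L_Y)=r$ (a direct consequence of the splitting $\L=\bigoplus\A x_i\oplus\C$) together with $\dim A-\dim A/I=r$ (from smoothness of both $A$ and $A/I$, which follows from the iterated form of Lemma \ref{Kashiwaradim}) yields exactly $d_{\w{U(L)}}(\iota_+M)=d_{\w{U(L_Y)}}(M)+\dim A-\dim A/I$. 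The main obstacle I expect is verifying the identifications $V_n\cong W_n/IW_n$ and $(\iota_+M)_n\cong M_n\otimes_{W_n}U_n$ at the level of completed enveloping algebras, where one must carefully track the interaction between the $\pi$-adic completion, the rescaling of the lattice, and the quotient by the ideal generated by $F$ in the centraliser subalgebra.
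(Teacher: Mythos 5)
Your proposal is correct and follows essentially the same route as the paper: pass to right modules via Lemma \ref{leftright}, reduce along the Fr\'echet--Stein filtration using Lemma \ref{ReviewDim}, and apply Proposition \ref{Kashiwaradim} at each Banach layer. You are somewhat more explicit than the paper at the step where the Proposition is applied to $\pi^n\L$ rather than $\L$ — noting that one must rescale $F$ to $\pi^{-n}F$ to restore the hypothesis $\pi^n\L\cdot F_n=\A^r$, and that this is harmless because $(\pi^{-n}F)=(F)$ in the $K$-Banach algebra $\hK{U(\pi^n\C)}$ — a point the paper leaves implicit, so your added care is a genuine small improvement rather than a detour.
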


\begin{proof}
By Lemma \ref{leftright}, we can consider the case where $M$ is a right coadmissible module. Let $\{x_1, \dots, x_d\}$ be an $I$-standard basis and let $F=\{f_1, \dots, f_r\}\subset A$ be the corresponding generating set. Rescaling the $x_i$ and $f_j$ if necessary, we can assume that there exists an affine formal model $\A$ of $A$ such that $\L:=\sum \A x_i$ is a free Lie lattice in $L$.\\ 
Write $U_n=\hK{U(\pi^n\L)}$. Recall that \cite[Lemma 5.8]{DhatTwo} provides an isomorphism of Fr\'echet--Stein algebras
\begin{equation*}
\w{U(L_Y)}\cong \w{U(C_L(F))}/(F).
\end{equation*}
Denote $\hK{U(C_{\pi^n\L}(F))}$ by $V_n$, so $\w{U(L_Y)}\cong \varprojlim V_n/(F)$ exhibits $\w{U(L_Y)}$ as a Fr\'echet--Stein algebra.\\
\\
Now
\begin{equation*}
j_{\w{U(L)}}(\iota_+M)=j_{U_n}\left(\iota_+M\underset{\w{U(L)}}{\otimes} U_n\right)
\end{equation*}
for sufficiently large $n$ by Lemma \ref{ReviewDim}, and by definition of $\iota_+$,
\begin{equation*}
\iota_+M\underset{\w{U(L)}}{\bigotimes} U_n\cong M\underset{\w{U(L_Y)}}{\bigotimes} \frac{V_n}{(F)}\underset{V_n/(F)}{\bigotimes}\frac{U_n}{IU_n}.
\end{equation*}
Therefore Proposition \ref{Kashiwaradim} implies that
\begin{equation*}
j_{U_n}(\iota_+M\otimes U_n)=j_{V_n/(F)}\left(M\otimes_{\w{U(L_Y)}} V_n/(F)\right)+r.
\end{equation*}
Since $\dim A-\dim A/I=r$, it follows that for sufficiently large $n$,
\begin{align*}
d_{\w{U(L)}}(\iota_+M)&=2\dim A-j_{U_n}\left(\iota_+M\otimes U_n\right)\\
& =2\dim A-j_{V_n/(F)}(M\otimes V_n/(F))-r\\
& = 2r +(2\dim A/I-j_{\w{U(L_Y)}}(M))-r\\
&=d_{\w{U(L_Y)}}(M)+r
\end{align*}
as required.
\end{proof}

\begin{thm} Suppose that $\iota\colon Y\to X$ is a closed embedding of smooth, equidimensional $K$-affinoid spaces. Then for every non-zero coadmissible $\w{\D}_Y$-module $\M$ the dimension of $\iota_+\M$ is given by \[ d(\iota_+\M) = d(\M) + \dim X - \dim Y. \]
\end{thm}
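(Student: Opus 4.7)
The plan is to reduce to the Corollary immediately preceding the theorem, which already establishes the dimension formula in the presence of an $I$-standard basis on a connected smooth affinoid. The theorem will follow by choosing an admissible cover of $X$ that is compatible both with the dimension theory (i.e.\ sits in $X_w(\T_X)$) and with the local structure theorem for closed embeddings.

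First I would produce an admissible cover $\{Z_\alpha\}$ of $X$ by connected affinoid subdomains with the following properties: (a) each $Z_\alpha$ lies in $X_w(\T_X)$, and (b) if $Z_\alpha\cap Y\neq\emptyset$, then writing $I_\alpha$ for the ideal of $Z_\alpha\cap Y$ in $\O(Z_\alpha)$, the $(K,\O(Z_\alpha))$-Lie algebra $\T(Z_\alpha)$ admits an $I_\alpha$-standard basis. The existence of such a cover uses \cite[Theorem 6.2]{DhatTwo} to locally produce $I$-standard bases, combined with the accessibility machinery from \cite[\S 7]{DhatOne} to shrink into $X_w(\T_X)$, and finally passage to connected components (note that $Z_\alpha\cap Y$ is then an affinoid subdomain of $Y$ lying in $Y_w(\T_Y)$, because the standard basis identifies $\T(Z_\alpha\cap Y)$ with a direct summand of $\T(Z_\alpha)$ carrying an inherited smooth Lie lattice).

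Next I would use that $\iota_+$ is compatible with restriction to admissible opens: for each $\alpha$ with $Z_\alpha\cap Y\neq\emptyset$, writing $\iota_\alpha\colon Z_\alpha\cap Y\hookrightarrow Z_\alpha$, one has $(\iota_+\M)(Z_\alpha)\cong (\iota_\alpha)_+(\M|_{Z_\alpha\cap Y})(Z_\alpha)$. This follows from the explicit formula for $\iota_+$ as a completed tensor product with $\w{U(\T(Z_\alpha))}/I_\alpha\w{U(\T(Z_\alpha))}$ together with the flat base change properties of $\w\otimes$ recorded in \cite[\S 8.2]{DhatOne}. When $Z_\alpha\cap Y=\emptyset$ the section $(\iota_+\M)(Z_\alpha)$ vanishes and contributes nothing to the supremum.

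Applying the Corollary to each relevant $Z_\alpha$ gives
\begin{equation*}
d((\iota_+\M)(Z_\alpha))=d(\M(Z_\alpha\cap Y))+\dim Z_\alpha-\dim(Z_\alpha\cap Y).
\end{equation*}
Equidimensionality forces $\dim Z_\alpha=\dim X$ and $\dim(Z_\alpha\cap Y)=\dim Y$ uniformly in $\alpha$. Taking the supremum over $\alpha$ and using the independence of the dimension from the choice of admissible cover (Lemma \ref{dimtheorysheaf}) on both sides yields $d(\iota_+\M)=d(\M)+\dim X-\dim Y$.

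The main obstacle is Step 1: assembling a single admissible cover that simultaneously satisfies the $I$-standard basis condition, lies in $X_w(\T_X)$, and refines to a cover of $Y$ inside $Y_w(\T_Y)$. The other ingredients—compatibility of $\iota_+$ with restriction, vanishing off $Y$, and independence of the cover in the definition of $d$—are essentially formal given the results quoted above.
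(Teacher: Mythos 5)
Your proposal is correct and takes essentially the same approach as the paper: cover $X$ by connected affinoids on which $\T_X$ admits an $I$-standard basis (invoking \cite[Theorem 6.2]{DhatTwo}, which also forces $\T_X$ to be free, hence membership in $X_w(\T_X)$ is automatic), identify $\T_Y(Y\cap X_i)$ with $N_{L_i}(I_i)/I_iL_i$, and apply the preceding Corollary together with equidimensionality and the cover-independence of the dimension. The only difference is that you spell out the compatibility of $\iota_+$ with restriction and the membership conditions $Z_\alpha\cap Y\in Y_w(\T_Y)$, which the paper leaves implicit.
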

\begin{proof}
By \cite[Theorem 6.2]{DhatTwo}, there exists an affinoid covering $(X_i)$ of $X$, with $X_i=\Sp A_i$ connected such that the $(K, A_i)$-Lie algebra $L_i=\T(X_i)$ admits an $I_i$-standard basis, where $I_i\subset A_i$ is the vanishing ideal of $Y\cap X_i$ -- the conditions in the reference are satisfied by smoothness of $Y$. Moreover, $\T_Y(Y\cap X_i)\cong N_{L_i}(I_i)/I_iL_i$ by dualizing \cite[Proposition 1.2]{BLR3}. Hence the claim follows from Corollary \ref{Kashiwaradim}. 
\end{proof}

\subsection{Proof of Bernstein's inequality}\label{Bernsteinsineq}

\begin{prop} Let $X=\Sp K\langle x_1,\ldots, x_d\rangle$ be a polydisc. Each non-zero coadmissible $\w{\D}_X$-module has dimension at least $d$.
\end{prop}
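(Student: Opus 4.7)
The plan is to reduce to a commutative problem on the associated graded ring, which is a $2d$-dimensional Tate algebra carrying a symplectic Poisson structure, and then invoke Gabber's involutivity theorem together with basic symplectic geometry.

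Let $\L = \bigoplus_{i=1}^d R\langle x_1, \ldots, x_d\rangle \partial_i$, a smooth Lie lattice of $\T(X)$, and write $U_n := \hK{U(\pi^n \L)}$ so that $\w{\D}(X) = \invlim U_n$. If $M$ is a non-zero coadmissible $\w{\D}(X)$-module, then by Lemma \ref{ReviewDim} there exists $n$ such that $M_n := U_n \otimes_{\w{\D}(X)} M$ is non-zero and $j_{\w{\D}(X)}(M) = j_{U_n}(M_n)$; so it suffices to show $j_{U_n}(M_n) \leq d$ for sufficiently large $n$.

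Equip $U_n$ with the order filtration (the continuous extension of the standard filtration on $U_{\A}(\pi^n\L)$), so that $\gr U_n$ identifies with the completed symmetric algebra $\hK{\Sym_{\A}(\pi^n\L)}$, which is the Tate algebra $S = K\langle x_1, \ldots, x_d, y_1, \ldots, y_d\rangle$ in $2d$ variables (with $y_i$ the principal symbol of $\pi^n \partial_i$). Since $S$ is a regular affinoid $K$-algebra of Krull dimension $2d$, it is Cohen--Macaulay, so for any finitely generated $S$-module $N$ the grade equals the codimension of the support. Equipping $M_n$ with a good filtration, $\gr M_n$ becomes a non-zero finitely generated $S$-module, and the filtered Auslander--Gorenstein comparison (applicable since both $U_n$ and $S$ are Auslander--Gorenstein of dimension $2d$) yields $j_{U_n}(M_n) \leq j_S(\gr M_n)$.

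The commutator on $U_n$ drops filtration degree by one because $[\pi^n \partial_i, x_j] = \pi^n \delta_{ij}$, inducing a Poisson bracket on $S$ with $\{y_i, x_j\} = \pi^n \delta_{ij}$, which is non-degenerate (symplectic) over $K$. By Gabber's involutivity theorem the support of $\gr M_n$ inside $\Sp S$ is involutive for this bracket, and classical symplectic geometry forces any involutive subvariety of a $2d$-dimensional symplectic space to have dimension at least $d$, equivalently codimension at most $d$. Combining, $j_{U_n}(M_n) \leq d$, giving $d(M) = 2d - j(M) \geq d$, as required.

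The main technical obstacle is transferring Gabber's involutivity theorem to the rigid analytic setting where $\gr U_n$ is a Tate algebra rather than an ordinary polynomial ring: the standard proofs proceed by constructing auxiliary good filtrations and analyzing Poisson brackets of principal symbols, and one must check that these arguments carry over in the affinoid context. A secondary technical point is the filtered Auslander--Gorenstein comparison for topologically-filtered Banach algebras, which must be formulated carefully in this setting (in particular, ensuring that a good filtration on $M_n$ compatible with the $\pi$-adic structure exists and behaves well under $\Ext$).
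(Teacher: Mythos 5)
Your strategy — reduce via Lemma \ref{ReviewDim} to showing $j_{U_n}(M_n)\leq d$ for $M_n\neq 0$, then pass to the symplectic Poisson structure on the associated graded Tate algebra and invoke Gabber's involutivity theorem — is precisely the route the paper takes; the paper simply delegates everything past the reduction step to a citation of \cite[Corollary 7.4, Theorem 3.3]{AW13}. The two technical obstacles you flag (an affinoid version of Gabber's theorem over a Tate algebra, and the filtered grade comparison relating $j_{U_n}(M_n)$ to $j_S(\gr M_n)$ for Banach algebras) are exactly what that reference establishes in the framework of almost commutative affinoid $K$-algebras, so your sketch is a faithful account of what underlies the citation but is incomplete as written; replacing those flagged obstacles with an appeal to \cite{AW13}, as the paper does, closes the argument.
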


\begin{proof} Let $\L$ be the $\R\langle x_1,\ldots, x_d\rangle$-submodule of $\Der_K(K\langle x_1,\ldots,x_d\rangle)$ spanned by $\{\partial_1,\ldots,\partial_d\}$. Then $\L$ is a smooth Lie lattice in $\T(X)$ and write $D_n=\hK{U(\pi^n\L)}$ and $D=\w{\D}(X)$. Then $D=\invlim D_n$ is a presentation of $D$ as a Fr\'echet--Stein algebra. \\
\\
Let $\M$ be a coadmissible $\w{\D}_X$-module and write $M:=\M(X)$, a coadmissible $D$-module, and $M_n=D_n\otimes_D M$. Since $X\in X_w(\T(X))$ it suffices to show that $j_D(M)\leq d$. Since $\Ext^j_{D}(M,D)\otimes_{D}D_n \cong \Ext^j_{D_n}(M_n,D_n)$ by Lemma \ref{ReviewDim} it suffices to show that $j_{D_n}(M_n)\leq d$ whenever $M_n\neq 0$. This follows from \cite[Corollary 7.4, Theorem 3.3]{AW13}. 
\end{proof}

\begin{thm} Suppose that $X$ is a smooth rigid analytic space over $K$. Then every non-zero coadmissible $\w{\D}_X$-module has dimension at least $\dim X$.
\end{thm}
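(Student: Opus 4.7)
The plan is to reduce the statement to the polydisc case established in the preceding Proposition, by invoking the closed-embedding dimension formula proved in Section~\ref{Kashiwaradim}. First, by the definition of the dimension of a coadmissible $\w{\D}_X$-module given in Section~\ref{dimtheorysheaf}, I can choose any admissible cover $\U$ of $X$ by affinoids in $X_w(\T_X)$ and compute $d(\M)$ as the supremum of the $d(\M(Y))$ for $Y\in\U$. Since $\M\neq 0$, there is some $Y=\Sp A\in\U$ with $\M(Y)\neq 0$; equidimensionality gives $\dim Y=\dim X$, so it suffices to prove the claim when $X=\Sp A$ is a smooth affinoid for which $\T(X)$ admits a smooth Lie lattice and $M:=\M(Y)$ is a non-zero coadmissible $\w{\D}(X)$-module.

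Next, I would use the fact that every affinoid $K$-algebra is a quotient of some Tate algebra $K\langle x_1,\ldots,x_n\rangle$ to obtain a closed embedding $\iota\colon X\hookrightarrow\mathbb{B}^n:=\Sp K\langle x_1,\ldots,x_n\rangle$ of $X$ into a polydisc. Both source and target are smooth and equidimensional, so the theorem of Section~\ref{Kashiwaradim} yields a non-zero coadmissible $\w{\D}_{\mathbb{B}^n}$-module $\iota_+ M$ satisfying
\begin{equation*}
d(\iota_+ M) = d(M) + n - \dim X;
\end{equation*}
the non-vanishing of $\iota_+ M$ follows from the faithfulness of $\iota_+$ given by the $\w{\D}$-module Kashiwara equivalence of \cite{DhatTwo}. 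Applying the preceding polydisc Proposition to $\iota_+ M$ yields $d(\iota_+ M)\geq n$, and rearranging gives $d(M)\geq\dim X$, as required.

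I do not anticipate any serious obstacle: both key ingredients, namely the dimension formula under closed embeddings and the polydisc case, are already in place, and the embedding of a smooth affinoid into a polydisc is standard. The only mild point is to check that the reduction step produces an affinoid on which $\T$ admits a smooth Lie lattice, which is automatic because $X_w(\T_X)$ is a basis for the topology of $X$ as noted in Section~\ref{dimtheorysheaf}.
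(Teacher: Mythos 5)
Your proposal is correct and follows exactly the same route as the paper: reduce to a smooth affinoid with $\T(X)$ admitting a smooth Lie lattice via the local nature of the dimension, embed it into a polydisc, apply the closed-embedding dimension formula of Section~\ref{Kashiwaradim}, and conclude from the polydisc Proposition. You additionally spell out the non-vanishing of $\iota_+M$ via Kashiwara's equivalence, which the paper leaves implicit.
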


\begin{proof} Let $\M$ be a non-zero coadmissible $\w{\D}_X$-module. Since $\dim \M$ is defined locally we may assume that $X$ is affinoid and $\T(X)$ admits a smooth Lie lattice. Now every $K$-affinoid can be viewed as a closed analytic subset of a polydisc $Y=\Sp K\langle x_1,\ldots,x_N\rangle$ for $N$ sufficiently large. Let $\iota\colon X\to Y$ denote the closed embedding. By Theorem \ref{Kashiwaradim}, $d(\iota_+\M)= d(\M)+ N -\dim X$. Thus it suffices to show that $d(\iota_+\M)\geq N$, and the result follows from the Proposition. 
\end{proof}
This finishes the proof of Theorem \ref{introthmA}.(ii).

\section{Weakly holonomic $\w{\D}$-modules}
\subsection{Definition and basic properties}\label{weakhol}
\begin{defn}
A coadmissible $\w{\D}_X$-module $\M$ on a smooth rigid analytic $K$-variety $X$ is called \emph{weakly holonomic} if $d( \M)\leq \dim X$.
\end{defn}
We denote the full subcategory of $\C_X$ consisting of weakly holonomic $\w{\D}_X$-modules by $\C^{\mathrm{wh}}_X$.

\begin{prop}
Let
\begin{equation*}
0\to \M_1\to \M_2\to \M_3\to 0
\end{equation*}
be a short exact sequence of coadmissible $\w{\D}_X$-modules. Then $\M_2$ is weakly holonomic if and only if both $\M_1$ and $\M_3$ are weakly holonomic.
\end{prop}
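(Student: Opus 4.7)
The plan is to reduce to the affinoid case and pass to the Fréchet-Stein layers, where the statement becomes a question about grade of modules over an Auslander-Gorenstein ring. Since weak holonomicity is a local condition (Section~\ref{dimtheorysheaf}), I may assume $X = \Sp A$ is a smooth affinoid with $\T(X)$ admitting a smooth Lie lattice, so that $\w{\D}(X) = \varprojlim U_n$ is a Fréchet-Stein presentation in which each $U_n$ is Auslander-Gorenstein of self-injective dimension at most $2\dim X$ by Theorem~\ref{introthmA}.(i). Writing $\M_i = \Loc M_i$, the exactness of $-\otimes_{\w{\D}(X)} U_n$ on coadmissible modules yields a short exact sequence $0 \to M_{1,n} \to M_{2,n} \to M_{3,n} \to 0$ of finitely generated $U_n$-modules for each $n$, and Lemma~\ref{ReviewDim} identifies weak holonomicity of $\M_j$ with the condition that $j_{U_n}(M_{j,n}) \geq \dim X$ for all sufficiently large $n$.

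The $(\Leftarrow)$ direction is then immediate from the long exact sequence of $\Ext^\bullet_{U_n}(-,U_n)$: if $\Ext^i_{U_n}(M_{1,n}, U_n) = 0 = \Ext^i_{U_n}(M_{3,n}, U_n)$ for $i < \dim X$, then $\Ext^i_{U_n}(M_{2,n}, U_n) = 0$ as well. For $(\Rightarrow)$, assume $j(M_{2,n}) \geq \dim X$ and set $a = j(M_{1,n})$. Supposing $a < \dim X$ for contradiction, the long exact sequence yields an injection
\begin{equation*}
\Ext^a_{U_n}(M_{1,n}, U_n) \hookrightarrow \Ext^{a+1}_{U_n}(M_{3,n}, U_n),
\end{equation*}
since $\Ext^a_{U_n}(M_{2,n}, U_n) = 0$. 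The Auslander condition applied to $M_{3,n}$ then forces the grade of the source to be at least $a+1$, contradicting the standard Auslander-Gorenstein identity $j(\Ext^{j(N)}_{U_n}(N, U_n)) = j(N)$ applied to $N = M_{1,n}$. Hence $j(M_{1,n}) \geq \dim X$, so $\M_1$ is weakly holonomic. The bound for $M_{3,n}$ is then automatic: for every $i < \dim X$ both $\Ext^{i-1}_{U_n}(M_{1,n}, U_n)$ and $\Ext^i_{U_n}(M_{2,n}, U_n)$ vanish, so the segment $\Ext^{i-1}_{U_n}(M_{1,n}, U_n) \to \Ext^i_{U_n}(M_{3,n}, U_n) \to \Ext^i_{U_n}(M_{2,n}, U_n)$ of the long exact sequence forces $\Ext^i_{U_n}(M_{3,n}, U_n) = 0$.

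The only non-formal input is the identity $j(\Ext^{j(N)}(N, U_n)) = j(N)$, a standard consequence of the biduality spectral sequence in the theory of Auslander-Gorenstein rings (and essentially the key to the more general min-formula $j(M_{2,n}) = \min(j(M_{1,n}), j(M_{3,n}))$ for grade in short exact sequences). Everything else reduces to exactness of localisation on coadmissible modules, the long exact sequence of Ext, and the Auslander condition.
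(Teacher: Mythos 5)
Your proof is correct and follows the same overall strategy as the paper: reduce to a smooth affinoid whose tangent sheaf admits a smooth Lie lattice, pass to the Banach layers $U_n$ using Lemma~\ref{ReviewDim} together with the exactness and faithfulness of localisation on coadmissible modules, and then establish the grade min-formula $j(M_{2,n})=\min\bigl(j(M_{1,n}),j(M_{3,n})\bigr)$ over each Auslander--Gorenstein $U_n$. The one genuine point of divergence is in that last step: the paper simply cites \cite[Proposition 4.5.(ii)]{Lev92} for the min-formula, whereas you re-derive it from first principles using the long exact sequence of $\Ext^\bullet_{U_n}(-,U_n)$, the Auslander condition, and the biduality identity $j\bigl(\Ext^{j(N)}_{U_n}(N,U_n)\bigr)=j(N)$. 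Your version is more self-contained, conceding only the biduality identity as an external input, while the paper's is more concise; the mathematical content is the same. One small point worth making explicit if you write this up: the translation ``$\M_i$ weakly holonomic $\iff$ $j_{U_n}(M_{i,n})\geq\dim X$ for $n\gg 0$'' uses, in addition to the isomorphism of Lemma~\ref{ReviewDim}, the fact that a nonzero coadmissible module $N$ over a Fr\'echet--Stein algebra satisfies $U_n\otimes_U N\neq 0$ for all sufficiently large $n$; this is what lets you choose a single $m$ working simultaneously for all three modules, as the paper does.
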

\begin{proof}
As the dimension can be calculated locally, we can assume that $X=\Sp A$ is a smooth, connected affinoid $K$-space with $\T(X)$ admitting a smooth Lie lattice. Choosing an affine formal model $\A$ and a smooth $(R, \A)$-Lie lattice $\L$ in $\T(X)$, we can write $\w{\D}(X)=\varprojlim \widehat{U_{\A}(\pi^n\L)}_K$, where $\widehat{U_{\A}(\pi^n\L)}_K$ is Auslander--Gorenstein of dimension at most $2 \dim A$ for sufficiently large $n$ by Theorem \ref{CompleteEnvAlg}. As before, we abbreviate $D=\w{\D}(X)$ and $D_n=\hK{U(\pi^n\L)}$\\
If $0\to M_1\to M_2\to M_3\to 0$ is a short exact sequence of coadmissible $D$-modules, there exists an integer $m$ such that
\begin{equation*}
j_{D}(M_i)=j_{D_n}(D_n\otimes_D M_i) \ \text{for} \ i=1, 2, 3,\ n\geq m
\end{equation*}
by Lemma \ref{ReviewDim}, so the result follows from \cite[Proposition 4.5.(ii)]{Lev92} applied to
\begin{equation*}
0\to D_n\otimes M_1\to D_n\otimes M_2\to D_n\otimes M_3\to 0,
\end{equation*}
which is exact by flatness of $D_n$ over $D$.
\end{proof}
It follows immediately from the above that $\C^{\mathrm{wh}}_X$ is an abelian subcategory of $\C_X$.

\begin{lem}
Let $\iota: Y\to X$ be a closed embedding of smooth rigid analytic $K$-varieties. Then Kashiwara's equivalence (\cite[Theorem A]{DhatTwo}) restricts to an equivalence between $\C^{\mathrm{wh}}_Y$ and the category of weakly holonomic $\w{\D}_X$-modules supported on $Y$.
\end{lem}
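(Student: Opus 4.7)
The plan is to leverage Theorem \ref{Kashiwaradim} directly: since that result already pins down the precise change in dimension under $\iota_+$ in the affinoid setting, all that remains is to reduce the global statement to affinoid patches and read off the inequality.

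First I would recall that Kashiwara's equivalence $\iota_+ \colon \C_Y \xrightarrow{\sim} \{\M \in \C_X \colon \Supp \M \subseteq Y\}$ is already established in \cite[Theorem A]{DhatTwo}. So it suffices to check that $\M \in \C_Y$ is weakly holonomic if and only if $\iota_+ \M \in \C_X$ is weakly holonomic, since the quasi-inverse will then automatically take weakly holonomic modules to weakly holonomic modules.

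Next, by the discussion in subsection \ref{dimtheorysheaf}, the dimension is computed locally: for any admissible cover $\U$ of $X$ by affinoids in $X_w(\T_X)$ we have $d(\iota_+\M) = \sup_{U \in \U} d((\iota_+\M)(U))$, and analogously for $\M$. Since $\iota$ is a closed embedding, we can choose $\U$ to consist of connected affinoid subspaces $U_i = \Sp A_i$ of $X$ such that $\T(U_i)$ admits a smooth Lie lattice and $Y \cap U_i = \Sp(A_i/I_i)$ is a closed affinoid subspace, with $\T(U_i)$ admitting an $I_i$-standard basis (this is possible by \cite[Theorem 6.2]{DhatTwo}, as used in the proof of Theorem \ref{Kashiwaradim}). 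On each such patch, Theorem \ref{Kashiwaradim} gives
\[
d\bigl((\iota_+\M)(U_i)\bigr) = d\bigl(\M(Y \cap U_i)\bigr) + \dim U_i - \dim(Y \cap U_i).
\]
Using equidimensionality (our standing convention) so that $\dim U_i = \dim X$ and $\dim(Y \cap U_i) = \dim Y$, and taking the supremum over the cover, we conclude $d(\iota_+\M) = d(\M) + \dim X - \dim Y$.

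From this equality the result is immediate: $d(\iota_+\M) \leq \dim X$ if and only if $d(\M) \leq \dim Y$, so weak holonomicity is preserved in both directions under the equivalence. There is no real obstacle here beyond the bookkeeping of the local-to-global reduction; the essential content has already been absorbed into Theorem \ref{Kashiwaradim}, and equidimensionality ensures that the correction term $\dim X - \dim Y$ is constant across the cover.
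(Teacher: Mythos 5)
Your proof is correct and follows the same route as the paper, which simply invokes the dimension formula $d(\iota_+\M)=d(\M)+\dim X - \dim Y$ from Theorem \ref{Kashiwaradim} and reads off the equivalence of the inequalities $d(\iota_+\M)\leq\dim X$ and $d(\M)\leq\dim Y$. You spell out the local-to-global reduction to affinoid patches more explicitly than the paper does, but this is the same bookkeeping already built into that theorem, so the content is identical.
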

\begin{proof}
This is a direct consequence of Lemma \ref{Kashiwaradim}.
\end{proof}
\subsection{Extensions}\label{extn}
Let $X$ be a smooth rigid analytic $K$-variety and let $\D_X$ denote the sheaf of algebraic (i.e. finite order) differential operators on $X$. In \cite{Mebkhout}, Mebkhout--Narvaez-Macarro developed a dimension theory on the category $\mathrm{coh}(\D_X)$ of coherent $\D_X$-modules by setting $d(\Loc M)=2\dim X-j_{\D(X)}(M)$ whenever $M$ is a finitely generated $\D(X)$-module on a smooth affinoid $K$-variety $X$ with free tangent sheaf. They also prove a version of Bernstein's inequality and define modules \emph{of minimal dimension} as the analogue of holonomicity.\\
\\
Recall from \cite[Lemma 4.14]{Bode} that there is an exact extension functor
\begin{align*}
E_X: & \ \mathrm{coh}(\D_X)\to \C_X\\
& \M\mapsto\w{\D}_X\otimes_{\D_X} \M.
\end{align*}
It follows from Theorem \ref{fflat} that $E_X$ is also faithful.

\begin{prop}
Let $\M$ be a coherent $\D$-module on a smooth, equidimensional rigid analytic $K$-space $X$. Then 
\begin{equation*}
d(\M)=d(E_X \M).
\end{equation*}
In particular, $E_X$ sends modules of minimal dimension to weakly holonomic $\w{\D}$-modules.
\end{prop}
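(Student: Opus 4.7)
The plan is to reduce to a local calculation on a smooth affinoid where $\T$ is free, and then deduce the comparison from Theorem \ref{introthmC} combined with the behaviour of $\Ext$ under flat base change.

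First, since both $d(\M)$ and $d(E_X\M)$ can be computed by taking the supremum over an admissible affinoid cover (for the $\w{\D}$-side this is Lemma \ref{dimtheorysheaf}; for the $\D$-side one uses the analogous statement of Mebkhout--Narvaez-Macarro, which rests on the fact that $\D(V)$ is faithfully flat over $\D(U)$ when $V\subseteq U$ is a suitable inclusion of affinoids and that $\Ext$ commutes with such flat base change), I may pass to a common refining cover. Thus I reduce to the case $X=\Sp A$ connected with $L:=\T(X)$ a free $A$-module, and with $\M = \Loc M$ for some finitely generated $\D(X)$-module $M$. In this situation the two definitions read
\[
d(\M) = 2\dim X - j_{\D(X)}(M), \qquad d(E_X\M) = 2\dim X - j_{\w{\D}(X)}(\w{\D}(X)\otimes_{\D(X)} M),
\]
so it suffices to show $j_{\D(X)}(M) = j_{\w{\D}(X)}(\w{\D}(X)\otimes_{\D(X)} M)$.

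Second, Theorem \ref{introthmC} gives that $\D(X)\to \w{\D}(X)$ is faithfully flat. Since $\D(X)$ is Noetherian and $M$ is finitely generated, $M$ admits a resolution by finitely generated free $\D(X)$-modules. Applying $\Hom_{\D(X)}(-,\D(X))$ and then $-\otimes_{\D(X)}\w{\D}(X)$ to such a resolution, and using that finite free modules satisfy $\Hom_{\D(X)}(F,\D(X))\otimes_{\D(X)}\w{\D}(X)\cong \Hom_{\w{\D}(X)}(F\otimes_{\D(X)}\w{\D}(X),\w{\D}(X))$ together with the flatness of $\w{\D}(X)$ over $\D(X)$, one obtains natural isomorphisms
\[
\Ext^i_{\D(X)}(M,\D(X))\otimes_{\D(X)}\w{\D}(X) \;\cong\; \Ext^i_{\w{\D}(X)}(\w{\D}(X)\otimes_{\D(X)} M,\w{\D}(X))
\]
for all $i\geq 0$. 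By \emph{faithful} flatness, the vanishing of either side is equivalent to the vanishing of $\Ext^i_{\D(X)}(M,\D(X))$ itself; taking the minimal non-vanishing index yields the desired equality of grades, hence of dimensions. The final assertion about modules of minimal dimension is then immediate, since minimal dimension on both sides means dimension $\leq \dim X$.

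The main obstacle is not the base change itself (which is routine once one has Theorem \ref{introthmC}) but rather ensuring that $E_X$ is compatible with passage to global sections over an affinoid (so that $E_X\M(X)\cong \w{\D}(X)\otimes_{\D(X)}\M(X)$) and that the Mebkhout--Narvaez-Macarro dimension also localises, as needed to set up the reduction to the free-tangent affinoid case. Both points boil down to the flatness of the relevant restriction and completion maps, together with the usual Tor-vanishing arguments for finitely generated modules over Noetherian rings, and require no new ideas beyond what has already been developed in the preceding sections.
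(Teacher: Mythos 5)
Your proof is correct and follows essentially the same route as the paper: reduce to a smooth affinoid with free tangent sheaf, then combine Theorem \ref{introthmC} (faithful flatness of $\w{\D}(X)$ over $\D(X)$) with the standard flat base change isomorphism $\Ext^j_{\D(X)}(M,\D(X))\otimes_{\D(X)}\w{\D}(X)\cong\Ext^j_{\w{\D}(X)}(\w{\D}(X)\otimes_{\D(X)}M,\w{\D}(X))$ to identify the two grades. Your additional remarks about the reduction step and about compatibility of $E_X$ with passage to sections are reasonable points of care, but they do not change the substance, which matches the paper's argument.
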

\begin{proof}
Without loss of generality, $X=\Sp A$ is a smooth affinoid with free tangent sheaf. Write $D=\D(X)$ and $\w{D}=\w{\D}(X)$. If $M$ is a finitely generated $D$-module, then faithful flatness of $\w{D}$ over $D$ (Theorem \ref{fflat}) implies that for any $j$,
\begin{equation*}
\Ext^j_{\w{D}}(\w{D}\otimes_D M, \w{D})\cong \Ext^j_{D}(M, D)\otimes_D \w{D}=0 \ \text{if and only if} \ \Ext^j_D(M, D)=0.
\end{equation*}
In particular, $j_D(M)=j_{\w{D}}(\w{D}\otimes_D M)$, and the result follows.
\end{proof}
\begin{cor}
Let $\M$ be an integrable connection on a smooth rigid analytic $K$-space $X$. Then $\M$ is a weakly holonomic $\w{\D}_X$-module.
\end{cor}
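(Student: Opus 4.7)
The plan is to combine the preceding proposition with the fact, classical in the algebraic setting and carried over to the rigid setting by Mebkhout--Narvaez-Macarro, that integrable connections are modules of minimal dimension over the sheaf $\D_X$ of finite-order differential operators. The flat connection makes $\M$ a coherent $\D_X$-module, and the $\w{\D}_X$-module intended in the statement of the corollary is $E_X\M$. By the preceding proposition, $d(E_X\M)=d(\M)$, so it suffices to prove $d(\M)\leq \dim X$ in the Mebkhout--Narvaez-Macarro sense.

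Dimension being a local quantity, I would reduce to the case $X=\Sp A$ a smooth affinoid with free tangent sheaf $\T(X)=\bigoplus_{i=1}^d A\partial_i$ of rank $d=\dim X$, and set $M=\M(X)$, which is a finitely generated $A$-projective module carrying its flat connection and hence a finitely generated $\D(X)$-module. Bernstein's inequality already supplies $d(M)\geq d$, so the remaining task is the grade bound $j_{\D(X)}(M)\geq d$. The main step is the Spencer--Koszul resolution
\[
0\to \D(X)\otimes_A \wedge^d\T(X)\otimes_A M\to\cdots\to \D(X)\otimes_A \T(X)\otimes_A M\to \D(X)\otimes_A M\to M\to 0
\]
of length $d$; by $A$-projectivity of $M$ each term is a direct summand of a free $\D(X)$-module. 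Applying $\Hom_{\D(X)}(-,\D(X))$ then produces a twisted de Rham complex whose cohomology computes $\Ext^{\bullet}_{\D(X)}(M,\D(X))$.

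The crux is to show that this dual complex vanishes in degrees strictly below $d$. For $M=A$ this is the classical computation of the de Rham cohomology of $\D(X)$ with its natural right action, which reduces via the symbol filtration to the exactness of the ordinary Koszul complex on the symmetric algebra $\Sym_A\T(X)$. The general case reduces to $M=A$ by trivializing $M$ on a finite affinoid cover and invoking flatness of $\D(X)$ over $A$. Granting this vanishing, $j_{\D(X)}(M)\geq d$, hence $d(\M)\leq d$, and the preceding proposition then upgrades this to weak holonomicity of $E_X\M$. I expect the main obstacle to be precisely the vanishing argument for the dual complex; everything else is formal.
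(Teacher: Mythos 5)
Your proposal follows the paper's strategy: apply the preceding proposition (that $E_X$ preserves dimensions) to the classical fact that integrable connections have minimal dimension as $\D_X$-modules. The paper simply cites Mebkhout--Narvaez-Macarro for this fact; you instead sketch a proof via the Spencer--Koszul resolution, which is a standard route but more work than is needed here. Two points deserve care. First, the reduction of the Ext-vanishing to $M=A$ by ``trivializing $M$ on a finite affinoid cover and invoking flatness of $\D(X)$ over $A$'' does not quite work as stated: trivializing $M$ as an $\O$-module does not trivialize the flat connection, and the connection genuinely twists the differentials of the dual Spencer complex. The cleaner route is to pass to symbols directly: the associated graded of the dual Spencer complex is a Koszul complex over $\Sym_A\T(X)$ for the tautological regular sequence, which is insensitive to $\nabla$, so vanishing in degrees below $d$ follows from regularity of the zero-section ideal. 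Second, and more importantly, you assert but do not justify that the natural $\w{\D}_X$-module structure on $\M$ is $E_X$ applied to its underlying $\D_X$-module structure; this is precisely the content of \cite[Proposition 6.2]{DhatTwo}, which the paper cites and which is the step that makes the rest of the argument apply to $\M$ itself rather than merely to $E_X\M$.
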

\begin{proof}
Any integrable connection is a coherent $\D_X$-module of minimal dimension, so we are done by applying \cite[Proposition 6.2]{DhatTwo}.
\end{proof}
\subsection{Duality}
\begin{lem}
Let $X$ be a smooth rigid analytic $K$-space of dimension $d$. The functor $\sHom_{\O_X}(\Omega_X, \sExt^d_{\w{\D}_X}(-, \w{\D}_X)):\C_X\to \C_X^{\mathrm{op}}$ sends weakly holonomic $\w{\D}_X$-modules to weakly holonomic $\w{\D}_X$-modules.
\end{lem}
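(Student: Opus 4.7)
The plan is to reduce to local sections and then combine the Auslander condition on $\w{U(\T(X))}$ with the side-changing equivalence of Section \ref{leftright}. Work locally on an admissible cover of $X$ by connected smooth affinoids $Y \in X_w(\T_X)$; since both coadmissibility and the dimension are verified on such a cover (Proposition and Lemma in Section \ref{dimtheorysheaf}), it suffices to treat the case $X = \Sp A$ with $\T(X)$ admitting a smooth Lie lattice. Write $D = \w{\D}(X)$, let $M = \M(X)$ and set $N := \Ext^d_{D}(M, D)$, which by Proposition \ref{dimtheorysheaf} is a coadmissible right $D$-module whose localisation computes $\sExt^d_{\w{\D}_X}(\M, \w{\D}_X)$.

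Since $\M$ is weakly holonomic, $d(\M) \leq \dim X = d$, equivalently $j_D(M) \geq d$. By Theorem \ref{CompleteEnvAlg} together with the discussion in Section \ref{ReviewDim}, $D$ is c-Auslander--Gorenstein of dimension at most $2d$, so every coadmissible right $D$-submodule of $\Ext^d_D(M, D)$ has grade at least $d$; in particular $j_D(N) \geq d$. Since Bernstein's inequality (Theorem \ref{Bernsteinsineq}) applies symmetrically to right modules via Lemma \ref{leftright}, we conclude $j_D(N) = d$ (when $N \neq 0$) and $d(N) \leq 2d - d = d = \dim X$. Hence $N$ is weakly holonomic as a right $D$-module.

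Finally, the side-changing functor $\Hom_A(\Omega_L, -)$ of Section \ref{leftright} converts coadmissible right $\w{U(\T(X))}$-modules to coadmissible left $\w{U(\T(X))}$-modules, and the Lemma of that subsection gives the identity
\begin{equation*}
\Ext^j_D(\Omega_L \otimes_A P, D) \cong \Hom_A(\Omega_L, \Ext^j_D(P, D)),
\end{equation*}
from which $d(\Hom_A(\Omega_L, N)) = d(N)$ follows by an argument dual to the one in that Lemma (using that $\Omega_L$ is an invertible $A$-module, so that $\Ext^j$ of $\Hom_A(\Omega_L, N)$ vanishes iff $\Ext^j$ of $N$ does). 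Since $\Omega_L = \Omega_X(X)$ when $L = \T(X)$, the sheafification yields that $\sHom_{\O_X}(\Omega_X, \sExt^d_{\w{\D}_X}(\M, \w{\D}_X))$ is weakly holonomic on each $Y$ in our cover, and hence on $X$.

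The main potential obstacle is the verification that the side-changing equivalence preserves the dimension in this direction (right-to-left); however, this is essentially formal from Lemma \ref{leftright} applied in the opposite direction via the inverse equivalence $N \mapsto \Hom_A(\Omega_L, N)$, since $\Omega_L$ is invertible over $A$, and everything glues on the cover by $X_w(\T_X)$ because both $\sExt^d$ and $\sHom_{\O_X}(\Omega_X, -)$ commute with the restriction maps in $X_w(\T_X)$.
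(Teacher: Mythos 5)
Your proof is correct and follows essentially the same route as the paper: reduce to an affinoid $X$ with $\T(X)$ admitting a smooth Lie lattice, apply the Auslander condition to get $j(\Ext^d(M,D))\ge d$, and then transfer back to left modules via the side-changing Lemma in Section \ref{leftright}, using Proposition \ref{dimtheorysheaf} to identify the sheaf-theoretic $\sExt^d$ with the localisation of $\Ext^d$. The only small thing worth noting is that for the claim of weak holonomicity alone, the Auslander condition (grade $\ge d$) already suffices, and Bernstein's inequality is only needed if one wants the stronger assertion that the dimension equals $d$ exactly --- but invoking it as you do is certainly fine and matches the paper's "of dimension $d$" claim.
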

\begin{proof}
As $X$ is equidimensional, we can assume that $X$ is affinoid with $\T(X)$ admitting a smooth Lie lattice. Let $\M=\Loc M$ be a non-zero weakly holonomic $\w{\D}_X$-module, so that $j(M)=d$.\\
By Auslander's condition, $\Ext^d(M, \w{\D}(X))$ has grade $\geq d$ as a right $\w{\D}_X(X)$-module. Thus Proposition \ref{dimtheorysheaf} implies that 
\begin{equation*}
\sExt^d_{\w{\D_X}}(\mathcal{M}, \w{\D_X})\cong \Loc \Ext^d(M, \w{\D}_X(X))
\end{equation*}
is a coadmissible right $\w{\D}_X$-module of dimension $d$, and the result follows from Lemma \ref{leftright}.
\end{proof}
Similarly, Auslander's condition in conjunction with Bernstein's inequality forces $\sExt^i(\M, \w{\D})=0$ for any $\M\in \C^{\mathrm{wh}}_X$, $i\neq d$.\\
We define the \emph{duality functor} $\mathbb{D}$ on $\C^{\mathrm{wh}}_X$ by $\mathbb{D}=\sHom_{\O}(\Omega_X, \sExt^{d}_{\w{\D}_X}(-, \w{\D}_X))$.
\begin{prop}
There is a natural isomorphism of functors $\mathbb{D}^2\cong \id$.
\end{prop}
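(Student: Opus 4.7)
The plan is to identify $\mathbb{D}^2\M$ with the iterated dual $\sExt^d_{\w{\D}}(\sExt^d_{\w{\D}}(\M,\w{\D}),\w{\D})$ and then invoke biduality at the Auslander--Gorenstein Banach-algebra level. Writing $D=\w{\D}_X$ and $\N:=\sExt^d_D(\M,D)$ (a coadmissible right $D$-module, weakly holonomic of grade $d$), we have $\mathbb{D}\M = \sHom_{\O}(\Omega_X,\N)$ and hence $\Omega_X\otimes \mathbb{D}\M \cong \N$. Applying the natural isomorphism of the leftright subsection to the left $D$-module $\mathbb{D}\M$ yields
\[ \Ext^d_D(\N,D)\cong \sHom_{\O}(\Omega_X,\Ext^d_D(\mathbb{D}\M,D)), \]
so after sheafifying we obtain $\mathbb{D}^2\M \cong \sExt^d_D(\sExt^d_D(\M,D),D)$. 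It therefore suffices to construct a natural isomorphism between this double-Ext and $\M$ itself.

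Since all functors in question are local on $X$, I reduce to the affinoid case $X=\Sp A$ with $\T(X)$ admitting a smooth Lie lattice, and write $D(X) = \invlim D_n$ with each $D_n$ Auslander--Gorenstein of self-injective dimension at most $2d$ by Theorem \ref{CompleteEnvAlg}. Set $M:=\M(X)$ and $M_n := D_n\otimes_{D(X)} M$. The weak holonomicity of $\M$, combined with the Auslander condition and the bound on injective dimension, forces $\Ext^i_{D_n}(M_n,D_n)=0$ and $\Ext^i_{D_n}(\Ext^d_{D_n}(M_n,D_n),D_n)=0$ for $i\neq d$ and $n$ sufficiently large, so that both $\mathrm{R}\Hom_{D_n}(M_n,D_n)$ and its double dual collapse to single cohomology in degree $d$.

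At each such level, the canonical double-duality map then produces an isomorphism
\[ M_n \xrightarrow{\sim} \Ext^d_{D_n}(\Ext^d_{D_n}(M_n,D_n),D_n), \]
which is the standard biduality for finitely generated modules over a two-sided Noetherian ring of finite left and right self-injective dimension: $D_n$ serves as a dualizing complex over itself, making $\mathrm{R}\Hom_{D_n}(-,D_n)$ an involution on $D^b_f(D_n)$, and for weakly holonomic $M_n$ this involution collapses to the claimed $\Ext^d$-biduality. Lemma \ref{ReviewDim} ensures compatibility of these isomorphisms under the connecting maps of the Fr\'echet--Stein presentation, and Proposition \ref{dimtheorysheaf} gives the analogous compatibility across an admissible affinoid cover, yielding the desired natural isomorphism $\M \xrightarrow{\sim} \mathbb{D}^2\M$.

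The principal obstacle is establishing biduality at each finite level $D_n$. In the Auslander-regular setting of \cite{ST} every finitely generated module has finite projective dimension, and biduality is automatic from reflexivity of perfect complexes; over the merely Auslander--Gorenstein $D_n$ this fails and a separate argument is needed. One approach is via the biduality spectral sequence
\[ E_2^{p,q} = \Ext^p_{D_n}(\Ext^{-q}_{D_n}(M_n,D_n),D_n), \]
which by the vanishing above degenerates at $E_2$ with a single non-zero entry at $(p,q)=(d,-d)$ that must then converge to $M_n$; alternatively one can appeal to a non-commutative dualizing-complex theorem asserting that a two-sided Noetherian ring of finite two-sided self-injective dimension is reflexive in $D^b_f$.
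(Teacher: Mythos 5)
Your reduction to the statement $\mathbb{D}^2\M \cong \sExt^d_{\w{\D}}(\sExt^d_{\w{\D}}(\M,\w{\D}),\w{\D})$ via the left--right comparison lemma, and then to the finite-level biduality $M_n \cong \Ext^d_{D_n}(\Ext^d_{D_n}(M_n,D_n),D_n)$ using Lemma \ref{ReviewDim} to pass through the Fr\'echet--Stein presentation, is exactly the structure of the paper's proof. You also correctly note the naturality point needed to glue the level-wise isomorphisms, which the paper addresses in a footnote.

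The one genuine gap is that you flag the finite-level biduality as ``the principal obstacle'' without actually closing it. Your spectral-sequence route is circular as phrased: the claim that the $E_2$-page (with a single nonzero entry at $(d,-d)$) ``must then converge to $M_n$'' \emph{is} the biduality statement, not a consequence of it --- the existence of such a bidualizing spectral sequence with abutment $M_n$ is precisely the reflexivity that must be argued for merely Gorenstein (as opposed to regular) rings. Your alternative appeal to a ``non-commutative dualizing-complex theorem'' asserting full reflexivity in $D^b_f$ for all Iwanaga--Gorenstein rings is both unreferenced and stronger than what is actually needed. The paper avoids both issues by invoking \cite[Theorem 4]{Iwanaga} directly, which gives the biduality isomorphism exactly for the relevant class of modules (those with $\Ext^i$ concentrated in degree $d$) over a two-sided Noetherian ring of finite self-injective dimension, and then spells out in a footnote that the isomorphism there, though stated as ``canonical,'' is the natural biduality map. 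Adding that reference, together with the observation that the needed statement only concerns modules of maximal grade, would complete your argument.
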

\begin{proof}
Let $X$ be a smooth affinoid of dimension $d$ with $\T(X)$ admitting a smooth Lie lattice $\L$. Write $D=\w{\D}(X)$ and $D_n=\hK{U(\pi^n\L)}$, so that $D=\varprojlim D_n$. By Theorem \ref{CompleteEnvAlg}, we can assume that $D_n$ is Auslander--Gorenstein of dimension at most $2d$ for each $n\geq 0$.\\
Let $M$ be a coadmissible $D$-module of grade $d$. By Lemma \ref{leftright} and \cite[Theorem 3.4]{DhatTwo}, we have
\begin{equation*}
\Hom_A(\Omega, \Ext^d_D(\Hom_A(\Omega, \Ext^d_D(M, D)), D))\cong \Ext^d_D(\Ext^d_D(M, D), D),
\end{equation*}
and now
\begin{align*}
\Ext^d(\Ext^d(M, D), D)&\cong\varprojlim \left(D_n\otimes_D \Ext^d_D(\Ext^d_D(M, D), D)\right)\\
& \cong \varprojlim \Ext^d_{D_n}(\Ext^d_D(M, D)\otimes_D D_n, D_n)\\
& \cong \varprojlim \Ext^d_{D_n}(\Ext^d_{D_n}(D_n\otimes_D M, D_n), D_n)
\end{align*}
by repeatedly applying Lemma \ref{ReviewDim}.\\
\\
Note that $\Ext^i_{D_n}(D_n\otimes_D M, D_n)=0$ for any $i\neq d$ by Lemma \ref{ReviewDim}, so
\begin{equation*}
\Ext^d(\Ext^d(D_n\otimes M, D_n), D_n)\cong D_n\otimes M
\end{equation*}
by \cite[Theorem 4]{Iwanaga}\footnote{In the reference, this is stated as a canonical isomorphism rather than a natural one, but it is clear that this is the natural morphism $M\to \mathrm{R}\Hom (\mathrm{R}\Hom(M, R), R)$ which becomes an isomorphism on this particular class of modules.}, and hence
\begin{equation*}
\Ext^d(\Ext^d(M, D), D)\cong \varprojlim (D_n\otimes_D M)\cong M
\end{equation*}
by coadmissibility of $M$.\\
Thus $\mathbb{D}^2(\Loc M)\cong \Loc M$ as required.
\end{proof}
\section{Examples}
\subsection{Infinite length and infinite-dimensional fibres}
We present an example of a weakly holonomic $\w{\D}$-module on the unit disc which is not of finite length and has infinite-dimensional fibres.\\
\\
Let $\theta_n(t)=\prod_{m=0}^n (1-\pi^mt)$, and consider the power series
\begin{equation*}
\theta(t)=\lim_{n\to \infty} \theta_n(t)=\prod_{m=0}^\infty(1-\pi^mt).
\end{equation*}
Note that $\theta(t)\in \w{K[t]}=\varprojlim K\langle \pi^nt \rangle$. We also note that for any $n\geq 0$, $(1-\pi^mt)$ is a unit in $K\langle \pi^nt\rangle$ for any $m>n$. Thus $\theta(t)=u_n\theta_n(t)$, where $u_n$ is a unit in $K\langle \pi^nt\rangle$.\\
\\
Let $X=\Sp K\langle x\rangle$ be the unit disc over $K$, and write $\partial\in \T(X)$ for the derivation $\mathrm{d}/\mathrm{d}x$. Let $D=\w{\D}(X)$ and set
\begin{equation*}
M=D/D\theta(\partial).
\end{equation*}
This is a coadmissible $\w{\D}(X)$-module, as it is finitely presented. \\
\\
Let $\A=R\langle x\rangle$, $\L=\A\cdot \partial \subset \T(X)$, and let $D_n=\widehat{U_{\A}(\pi^n\L)}_K$. By the considerations above, 
\begin{equation*}
D_n\otimes_D M\cong D_n/D_n\theta_n(\partial),
\end{equation*}
which is finitely generated over $K\langle x\rangle$. In particular, $d(M)=1$.\\
\\
But now note that for every $n\geq 0$, $M$ surjects onto $D/D\theta_n(\partial)$, which is a direct sum of $n+1$ integrable connections of rank $1$. In particular, $M$ can not be of finite length as a $D$-module.\\
\\
By the same argument, the fibre at zero $M/xM$ cannot be a finite-dimensional $K$-vector space. In particular, weakly holonomic $\w{\D}$-modules need not have finite-dimensional fibres, and weak holonomicity is generally not stable under pullback.
\subsection{Pushforward along an open embedding}\label{recallBitoun}
We now recall from \cite{Bitoun} that weak holonomicity is generally not stable under pushforward either. \\
For this, recall from \cite[Definition 13.1.1]{Kedlaya} that the \emph{type} of $\lambda \in K$ is the radius of convergence of the formal power series
\begin{equation*}
\sum_{i\geq 0, i\neq \lambda} \frac{x^i}{\lambda-i}.
\end{equation*}
Let $X=\Sp A$ be an affinoid $K$-space with free tangent sheaf, let $f\in A$ be non-constant and consider $j: U= \{f\neq 0\}\to X$. We call a left $\D(X)[f^{-1}]$-module $N$ a \emph{meromorphic connection on $X$ with singularities along $Z=X\setminus U$} if $N$ is free of finite rank over $A[f^{-1}]$.\\
Given $m\in N$, consider the ideal $I(m)\subseteq K[s]$ consisting of all polynomials $b(s)$ such that there exists some $P(s)\in \D(X)[s]$ satisfying
\begin{equation*}
P(s)f^{-s}m=b(s)f^{-s-1}m \ \forall s\in \mathbb{Z}.
\end{equation*}
By \cite[Th\'eor\`eme 3.1.1]{Mebkhout}, $I(m)$ is non-zero, and we call its monic generator the $b$-function of $m$.
\begin{thm}[{\cite[Theorems 1.2, 1.3]{Bitoun}}]
\begin{enumerate}[(i)]
\item Let $N$ be a meromorphic connection on $X$ with singularities along $Z$. Suppose that $N$ is generated as an $A[f^{-1}]$-module by $m_1, \dots, m_r$ such that all the roots of the associated $b$-functions $b_1, \dots, b_r$ in an algebraic closure of $K$ are of positive type. Then $N$ localizes to an integrable connection $\M$ on $U$ such that $j_*\M\in \C_X$.
\item Let $X=\Sp K\langle x\rangle$ and $j: U\to X$ for $U=X\setminus\{0\}$. Set $\M_{\lambda}=\O_Ux^{\lambda}$ for $\lambda \in K$. Then $j_*\M_{\lambda}\in \C_X$ if and only if $\lambda$ is of positive type. In particular, there exist integrable connections $\M$ on $U$ such that $j_*\M$ is not coadmissible. 
\end{enumerate}
\end{thm}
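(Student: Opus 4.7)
The plan is to construct the desired coadmissible extension in (i) by iterating the $b$-function relations, with convergence quantified by the positive-type hypothesis, and for (ii) to specialise (i) in the forward direction while using an explicit Fr\'echet-convergence argument for the converse.

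For part (i), I would enlarge the affine formal model $\A \subset A$ so that $f \in \A$, the operators $P_i(s)$ witnessing $P_i(s) f^{-s} m_i = b_i(s) f^{-s-1} m_i$ have coefficients in $\A[s]$, and $\T(X)$ admits a smooth $(R, \A)$-Lie lattice $\sL$. Let $M_0 \subset N$ be the $U(\sL)$-submodule generated by $m_1, \dots, m_r$. The candidate coadmissible extension is $\widetilde{N} := \varprojlim_n N_n$, with $N_n$ the $\hK{U(\pi^n \sL)}$-module generated by $m_1, \dots, m_r$ together with all iterated $b$-function-shifted expressions corresponding to $f^{-k} m_i$ for $k \geq 1$.

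The key computation is the iteration
\[ f^{-k} m_i = \Bigl(\prod_{\ell=0}^{k-1} b_i(-\ell-1)^{-1}\Bigr) Q_{i,k} \cdot m_i, \qquad \deg Q_{i,k} \leq dk, \]
where $d = \max_i \deg_s P_i$. Writing $b_i(t) = \prod_j (t - \lambda_{ij})$, positive type of each $\lambda_{ij}$ gives a lower bound on $|\lambda_{ij} - (-\ell - 1)|$ of the form $c^{\ell + 1}$, so that the iterated denominator is controlled on the Banach level. On the other hand, in $\hK{U(\pi^n \sL)}$ each tangent vector carries a factor $\pi^{-n}$, so $Q_{i,k}$ contributes a factor of order $|\pi|^{-ndk}$. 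Balancing these quantitative bounds by choosing $n$ sufficiently large, one hopes to verify that $N_n$ is finitely generated over $\hK{U(\pi^n \sL)}$ and that the natural maps $N_{n+1} \to N_n$ induce $D_n \otimes_{D_{n+1}} N_{n+1} \cong N_n$, yielding coadmissibility. Identification with $j_* \M$ then follows from the universal property and the fact that the restriction of $\widetilde{N}$ to $U$ recovers $\M$ by construction.

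For part (ii), the forward direction is a direct specialisation of (i): $\M_\lambda$ is generated over $\O_U$ by $m = x^\lambda$, and $\partial \cdot x^{\lambda - s} = (\lambda - s) x^{\lambda - s - 1}$ gives the $b$-function $b(s) = \lambda - s$ with root $\lambda$, of positive type by assumption. For the converse, assume $j_* \M_\lambda$ is coadmissible. Since $x^{\lambda-k} = \bigl(\prod_{j=0}^{k-1}(\lambda-j)\bigr)^{-1} \partial^k x^\lambda$ lies in any $\w{\D}(X)$-submodule containing $x^\lambda$, the Banach-level norms in the coadmissible structure must bound $\prod_{j=0}^{k-1}|\lambda-j|^{-1}$ in a way compatible with the Fr\'echet topology on $\O(U) x^\lambda$. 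Extracting the quantitative constraint by testing against sufficiently generic elements of $\O(U) x^\lambda$ should force $\liminf_k |\lambda-k|^{1/k} > 0$, which is exactly positive type.

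The main obstacle I anticipate is the precise quantitative matching between the iterated $b$-function denominator (which naively grows quadratically in the iteration depth) and the linear-in-$n$ gain from rescaling the Lie lattice. The construction relies crucially on only needing \emph{finite generation} of each $N_n$ rather than uniform convergence of the individual expressions; the iterated rewrites of $f^{-k} m_i$ may have unbounded norm so long as each remains a $\hK{U(\pi^n\sL)}$-combination of expressions at lower iteration depth. Formalising this inductive structure and verifying compatibility of the transition maps $N_{n+1} \to N_n$ is where the bulk of the work lies. The converse in (ii) is also subtle since, in principle, it must rule out \emph{any} coadmissible structure on $j_* \M_\lambda$, not merely the cyclic one suggested by the generator $x^\lambda$; this presumably uses the rigidity provided by the underlying $\O_U$-module structure being of rank one.
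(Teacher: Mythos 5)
This theorem is cited by the paper from \cite[Theorems 1.2, 1.3]{Bitoun}; the paper contains no proof of it, so there is no internal proof to compare against. With that caveat, here is an assessment of your attempt on its own merits.

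Your overall strategy for (i) — iterate the $b$-function identity to re-express $f^{-k}m_i$ as a $\D(X)$-combination of the $m_i$, and use positive type to control the growth of the accumulated denominators against the $\pi^{n}$-rescaling of the Lie lattice — is the right mechanism, but the proposal has genuine gaps as written. First, the module $N_n$ is not well-defined: ``the $\hK{U(\pi^n\sL)}$-module generated by $m_i$ together with all iterated $b$-function-shifted expressions'' does not specify an actual submodule of anything, and you never produce the ambient object in which these generators live. Second, you gloss over the obstruction that $b_i(-\ell-1)$ may vanish for some $\ell\geq 0$ (i.e.\ $b_i$ may have negative integer roots), in which case the claimed iteration $f^{-k}m_i = (\prod b_i(-\ell-1)^{-1})Q_{i,k}m_i$ breaks down; one must first replace each $m_i$ by a suitable $f^{-N}m_i$ to shift the $b$-function and clear those roots, which is a necessary preliminary step. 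Third, the ``identification with $j_*\M$ via the universal property'' is asserted rather than argued: the substantive content is precisely showing that \emph{every} element of $\M(U)=\O(U)\otimes_{A[f^{-1}]}N$ (not just those in the image of $\w{\D}(X)\otimes_{\D(X)}N$) is captured, and this is where the positive-type estimate must be deployed quantitatively, not just heuristically. A cleaner route, and the one consistent with how the paper later invokes \cite[Proposition 2.14]{Bitoun}, is to show that the finitely presented (hence coadmissible) module $\w{\D}(X)\otimes_{\D(X)}N$ surjects onto $\M(U)$ and that positive type forces injectivity; your direct construction of $\varprojlim N_n$ duplicates that work without the benefit of finite presentation.

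For (ii), the forward direction correctly specialises (i) with $b(s)=\lambda-s$. The converse, however, is not an argument: ``testing against sufficiently generic elements of $\O(U)x^\lambda$ should force $\liminf_k|\lambda-k|^{1/k}>0$'' names a hoped-for conclusion without exhibiting the element of $\O(U)x^\lambda$ that obstructs finite generation of $D_n\otimes_{\w{\D}(X)}\M_\lambda(U)$ for a specific $n$. You also worry about ``ruling out any coadmissible structure,'' but this worry is misplaced: coadmissibility is a property of the $\w{\D}(X)$-module $\M_\lambda(U)$, not additional data, and if it held then $M_n:=D_n\otimes_{\w{\D}(X)}\M_\lambda(U)$ would be determined and finitely generated over $D_n$ for every $n$. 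The correct form of the converse is to fix $n$ and exhibit a Laurent series $g=\sum b_k x^{-k}\in\O(U)$, with $|b_k|\rho^k\to 0$ for all $\rho>0$, such that $ge$ is not in the $D_n$-span of finitely many elements when $|\lambda(\lambda-1)\cdots(\lambda-k+1)|$ decays superexponentially (the negation of positive type); that computation is concrete but is not present in your sketch.
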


\section{Zariski open embeddings: The case of an algebraic snc divisor}
Let $j: U\to X$ be an embedding of a Zariski open subspace $U$ in a smooth rigid analytic $K$-space $X$. We show that at least in the case of the structure sheaf, pathologies as in Theorem \ref{recallBitoun}.(ii) do not occur.\\
\\
As the question is local, we can (and will) assume from now on that $X$ is a smooth affinoid with free tangent sheaf. We will first consider the case where $Z=X\setminus U$ is the analytification of a strict normal crossing divisor. In section 9, we will reduce the general case to this set-up by passing to a suitable resolution of singularities.

\subsection{Relative analytification}\label{relana}
We recall some results from \cite{Schoutens} regarding relative analytification. This was already considered by K\"{o}pf in \cite{Kopf}, but we have decided to refer to a source that is more readily available.\\
If $A$ is an affinoid $K$-algebra, there is a relative analytification functor $\mathbf{X}\mapsto \mathbf{X}^{\mathrm{an}}$ from schemes of finite type over $\Spec A$ (`$A$-schemes' for short) to rigid analytic $K$-varieties. The construction of $\mathbf{X}^{\mathrm{an}}$ is a straightforward generalization of the analytification procedure for $A=K$.\\
There is a natural morphism
\begin{equation*}
\eta_{\mathbf{X}}: \mathbf{X}^{\mathrm{an}}\to \mathbf{X}
\end{equation*}
of locally $G$-ringed spaces, satisfying the usual universal property: any morphism $Y\to \mathbf{X}$ of locally $G$-ringed spaces, where $Y$ is a rigid analytic $K$-variety, factors uniquely through $\eta_{\mathbf{X}}$ (\cite[Definition 1.1.1]{Schoutens}).
\begin{prop} Let $A$ and $B$ be affinoid $K$-algebras. 
\begin{enumerate}[(i)]
\item If $\mathbf{X}$ is both an $A$-scheme and a $B$-scheme, then the analytification of $\mathbf{X}$ relative to $A$ is isomorphic to the analytification of $\mathbf{X}$ relative to $B$. In particular, we can speak of `the' analytification of $\mathbf{X}$.
\item The analytification of $\Spec A$ is $\Sp A$.
\item If $\mathbf{Z}$ is a closed subscheme of an $A$-scheme $\mathbf{X}$ given by the vanishing ideal $\mathcal{I}$, then $Z:=\eta^{-1}(\mathbf{Z})$ is the closed analytic subset of $\mathbf{X}^{\mathrm{an}}$ defined by $\eta^*\mathcal{I}$. Moreover, $Z\cong \mathbf{Z}^{\mathrm{an}}$.
\item If $\mathbf{Y}$ is an open subscheme of an $A$-scheme $\mathbf{X}$, then $\mathbf{Y}^{\mathrm{an}}$ can be identified with $\eta^{-1}(\mathbf{Y})$, a Zariski open subspace of $\mathbf{X}^{\mathrm{an}}$.
\item If $\mathbf{X}_i$, $\mathbf{X}$ are $A$-schemes such that the $\mathbf{X}_i$ form a covering of $\mathbf{X}$, then the $\mathbf{X}_i^{\mathrm{an}}$ form an admissible covering of $\mathbf{X}^{\mathrm{an}}$.
\item If $\mathbf{X}\to\mathbf{S}, \mathbf{Y}\to \mathbf{S}$ are morphisms of $A$-schemes, then $(\mathbf{X}\times_{\mathbf{S}}\mathbf{Y})^{\mathrm{an}}\cong \mathbf{X}^{\mathrm{an}}\times_{\mathbf{S}^{\mathrm{an}}}\mathbf{Y}^{\mathrm{an}}$. 
\end{enumerate}
\end{prop}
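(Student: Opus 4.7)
The plan is to derive each of (i)--(vi) from the defining universal property of analytification, supplemented by the explicit local construction of $\mathbf{X}^{\mathrm{an}}$: if $\mathbf{X} = \Spec A[x_1,\ldots,x_n]/I$, then $\mathbf{X}^{\mathrm{an}}$ is obtained as an admissible rising union of the affinoid subdomains cut out by $I$ inside the polydiscs $\Sp A\langle \pi^m x_1,\ldots,\pi^m x_n\rangle$ as $m \to \infty$, then glued along open immersions on a general $\mathbf{X}$. All six statements are recorded by Schoutens and K\"opf; the outline below indicates how I would reconstruct the arguments.

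For (i), the universal property characterising $\eta_{\mathbf{X}}\colon \mathbf{X}^{\mathrm{an}}\to \mathbf{X}$ in the category of locally $G$-ringed spaces makes no reference to the base ring, so the analytifications relative to $A$ and to $B$ represent the same functor on rigid analytic $K$-varieties and are therefore canonically isomorphic. For (ii), take the natural map $\Sp A \to \Spec A$ sending each maximal ideal to itself, paired with the identity $A \to \mathcal{O}(\Sp A) = A$; a morphism $Y = \Sp B \to \Spec A$ from an affinoid amounts to a $K$-algebra homomorphism $A \to B$, which is automatically continuous since $A$ is affinoid and hence factors uniquely through $\Sp A$. One globalises over an admissible affinoid cover of $Y$.

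For (iii) and (iv), fix a closed (respectively open) subscheme $\mathbf{Z} \subseteq \mathbf{X}$ and consider the locally $G$-ringed subspace $Z := \eta^{-1}(\mathbf{Z})$ of $\mathbf{X}^{\mathrm{an}}$, defined in the closed case by the ideal sheaf $\eta^{\ast}\mathcal{I}_{\mathbf{Z}}$. A morphism $Y \to \mathbf{X}^{\mathrm{an}}$ factors through $Z$ if and only if the induced map $Y \to \mathbf{X}$ factors through $\mathbf{Z}$, so $Z$ represents the same functor as $\mathbf{Z}^{\mathrm{an}}$, whence $Z \cong \mathbf{Z}^{\mathrm{an}}$. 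For (v), the $\mathbf{X}_i^{\mathrm{an}}$ are Zariski open in $\mathbf{X}^{\mathrm{an}}$ by (iv); they set-theoretically cover $\mathbf{X}^{\mathrm{an}}$ because $\eta$ is surjective on points; and admissibility reduces via (iv) and the local description above to the classical fact that a Zariski open cover of $\Spec A[x_1,\ldots,x_n]$ analytifies to an admissible cover of its analytification. Finally (vi) is a direct comparison of universal properties: $\mathbf{X}^{\mathrm{an}}\times_{\mathbf{S}^{\mathrm{an}}}\mathbf{Y}^{\mathrm{an}}$ exists in the category of rigid analytic $K$-varieties and, since the universal property of analytification is compatible with products, it satisfies the universal property defining $(\mathbf{X}\times_{\mathbf{S}}\mathbf{Y})^{\mathrm{an}}$.

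The main obstacle is the admissibility question in (v). The universal property only characterises $\mathbf{X}^{\mathrm{an}}$ as a locally $G$-ringed space, and to confirm that pullbacks of Zariski open covers are genuinely admissible for the strong $G$-topology one must invoke the explicit rising-union construction of $\mathbf{X}^{\mathrm{an}}$ and check compatibility with the Grothendieck topology. Once this point is settled, the remaining verifications are formal applications of the universal property and its compatibility with fibre products and subfunctors, so in practice we will simply cite Schoutens for the details.
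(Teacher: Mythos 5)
Your plan matches the paper's for parts (i), (ii), (iii), (iv) and (vi): the paper also dispatches (i) and (vi) by the universal property, cites Schoutens for (ii) and (iii), and for (iv) notes that $\eta^{-1}(\mathbf{Y})\cong\mathbf{Y}^{\mathrm{an}}$ follows from the universal property (then invokes (iii) on the complementary closed subscheme to upgrade ``admissible open'' to ``Zariski open''). The one real divergence is (v), which you flag as ``the main obstacle'' and propose to resolve by unwinding the rising-union construction of the analytification of an affine piece. The paper sidesteps this entirely: since $\eta_{\mathbf{X}}$ is, by the defining structure of the analytification, a morphism of locally $G$-ringed spaces, the preimage under $\eta$ of a Zariski open covering of $\mathbf{X}$ is automatically an admissible covering of $\mathbf{X}^{\mathrm{an}}$; combined with the identification $\eta^{-1}(\mathbf{X}_i)\cong\mathbf{X}_i^{\mathrm{an}}$ from (iv), this finishes (v) in one line. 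Your approach is correct but reproves from scratch a compatibility that is already packaged into the assertion ``$\eta$ is a morphism of locally $G$-ringed spaces''; keeping that fact in view would have let you avoid the detour through the explicit construction.
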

\begin{proof}
(i) is clear from the universal property, (ii) is \cite[Example 1.3.2.(1)]{Schoutens}. (iii) is \cite[Corollary 2.1.3]{Schoutens}. Concerning (iv), it follows easily from the universal property that $\mathbf{Y}^{\mathrm{an}}\cong \eta^{-1}(\mathbf{Y})$, which is an admissible open subspace of $\mathbf{X}^{\mathrm{an}}$ by continuity of $\eta$. By (iii), it is then even Zariski open. (v) is a consequence of (iv) and the fact that $\eta$ is a morphism of locally $G$-ringed spaces. (vi) follows again from the universal property.
\end{proof}
\subsection{Algebraic snc divisors}\label{algsnc}
\begin{defn}
A closed subscheme $\mathbf{Z}$ of a locally Noetherian regular scheme $\mathbf{X}$ is called a \emph{strict normal crossing (snc) divisor} if
\begin{enumerate}[(i)]
\item $\mathbf{Z}$ is defined as the vanishing of an invertible ideal sheaf $\mathcal{I}$.
\item for each $x\in \mathbf{Z}$, there exists a regular set of local parameters $x_1, \dots, x_d\in \O_{\mathbf{X}, x}$ such that the ideal $\mathcal{I}_x\subseteq \O_{\mathbf{X}, x}$ is generated by $\prod_{i=1}^r x_i$ for some $1\leq r\leq d$.
\end{enumerate} 
\end{defn}
\begin{defn}
A closed analytic subset $Z$ of a rigid analytic $K$-variety $X$ is called an \emph{algebraic snc divisor} if there exists an affinoid $K$-algebra $B$, a regular scheme $\mathbf{X}$ that is also a $B$-scheme with $\mathbf{X}^{\mathrm{an}}\cong X$, and an snc divisor $\mathbf{Z}$ of $\mathbf{X}$ such that $\mathbf{Z}^{\mathrm{an}}=Z$ as in Proposition \ref{relana}.(iii). 
\end{defn}
We now wish to lift the snc condition from the stalk level to a condition on the level of admissible coverings.\\
\\
Note that if $X=\Sp A$ is a smooth affinoid, then $\mathbf{X}=\Spec A$ is a Noetherian regular scheme by \cite[Proposition 7.3.2/8]{BGR}.
\begin{lem}
Let $X=\Sp A$ be a smooth affinoid $K$-space, and let $\mathbf{Z}\subseteq \Spec A=\mathbf{X}$ be an snc divisor. Then there exists an admissible covering of $X$ by affinoid subdomains $X_i=\Sp A_i$ with the following property: for each $i$ with $\mathbf{Z}^{\mathrm{an}}\cap X_i\neq \emptyset$, there exist $x_{i1}, \dots, x_{id}\in A_i$ such that
\begin{enumerate}[(i)]
\item the elements $\mathrm{d}x_{i1}, \dots, \mathrm{d}x_{id}$ form a free generating set of $\Omega^1(X_i)$, and
\item the subvariety $\mathbf{Z}^{\mathrm{an}}\cap X_i$ is given as the vanishing set of $\prod_{j=1}^r x_{ij}$ for some $1\leq r\leq d$.
\end{enumerate}
\end{lem}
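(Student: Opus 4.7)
The plan is to spread the snc condition from the stalks of $\mathbf{X} = \Spec A$ to Zariski open basic affine neighbourhoods of $\mathbf{X}$, cover $\mathbf{X}$ by finitely many of these, and then pass to an admissible affinoid covering of $X$ by combining the analytification functor of Proposition \ref{relana} with a rational subdomain refinement.

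First, I would work entirely on $\mathbf{X}$. For each closed point $\mathfrak{m}$ of $\mathbf{X}$ lying on $\mathbf{Z}$, the snc definition provides a regular system of parameters $x_1,\dots,x_d \in \O_{\mathbf{X},\mathfrak{m}}$ with $\I_{\mathfrak{m}} = (x_1 \cdots x_r)$. After clearing denominators, all $x_j$ lie in some $A_g$ with $g \in A \setminus \mathfrak{m}$. Now $\Omega^1_{A/K}$ and $\I$ are coherent on the Noetherian scheme $\mathbf{X}$, so the stalk-level facts ``$dx_1,\dots,dx_d$ is a free basis of $\Omega^1_{\mathbf{X}/K,\mathfrak{m}}$'' and ``$\I_{\mathfrak{m}} = (x_1\cdots x_r)$'' extend to the same conditions on all of $\Spec A_g$ after replacing $g$ by a suitable multiple. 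For closed points $\mathfrak{m}\notin \mathbf{Z}$, I simply pick any $g \in A \setminus \mathfrak{m}$ such that $\Spec A_g$ avoids the closed subset $\mathbf{Z}$. Since $K$-affinoid algebras are Jacobson, the basic opens so produced cover all of $\mathbf{X}$; by quasi-compactness, a finite subfamily $\{\Spec A_{g_i}\}_{i\in I}$ already does so.

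Next, I would transport the cover to $X$. By Proposition \ref{relana}(iv), each $\Spec A_{g_i}$ analytifies to the Zariski open subspace $U_i = \{x \in X : |g_i(x)| > 0\} \subseteq X$, and by (v) the family $\{U_i\}_{i \in I}$ admissibly covers $X$. To refine into affinoid subdomains, I would cover each $U_i$ admissibly by the rational subdomains
\begin{equation*}
X_{i,n} := \{x \in X : |g_i(x)| \geq |\pi|^n\} = \Sp A\langle \pi^n/g_i\rangle, \qquad n \geq 0,
\end{equation*}
in whose affinoid algebras $A_i$ the element $g_i$ is a unit. The family $\{X_{i,n}\}$ is the required admissible cover of $X$ by affinoid subdomains. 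For each $X_{i,n}$ meeting $\mathbf{Z}^{\mathrm{an}}$, the corresponding $i$ must come from a closed point of $\mathbf{Z}$, so the elements $x_1,\dots,x_d \in A_{g_i}$ from the first paragraph map into $A_i$ and provide the claimed $x_{i1},\dots,x_{id}$. Condition (ii) then follows from Proposition \ref{relana}(iii): the closed analytic subset $\mathbf{Z}^{\mathrm{an}} \cap X_{i,n}$ is cut out by $\eta^*\I$, which on $X_{i,n}$ is generated by $x_1 \cdots x_r$. Condition (i) follows from the standard compatibility $\Omega^1(X_{i,n}) \cong A_i \otimes_{A_{g_i}} \Omega^1_{A_{g_i}/K}$ between algebraic differentials on a smooth scheme and analytic differentials on its analytification, so that $dx_1,\dots,dx_d$ remain a free $A_i$-basis.

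The main obstacle I expect is the spreading-out step in the first paragraph: one must check that the three stalk-level conditions (integrality of the $x_j$, freeness of the differentials, and principal generation of $\I$) can all be realised simultaneously on a single basic open, which is routine but requires care with coherent sheaves on $\mathbf{X}$. A secondary point to record is the identification of $\Omega^1$ on a rational subdomain of $\mathbf{X}^{\mathrm{an}}$ with the pullback of the algebraic $\Omega^1$, which follows from the universal property of analytification encoded in Proposition \ref{relana}.
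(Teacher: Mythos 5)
Your overall strategy — spread the snc condition from stalks of $\Spec A$ to basic opens, cover, then analytify and refine by rational subdomains — is the same skeleton as the paper's argument. But there is a genuine gap in the spreading-out step, and it is exactly the delicate point the paper is careful to navigate.

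You assert that $\Omega^1_{A/K}$ is coherent on $\mathbf{X} = \Spec A$, and use this to spread the stalk-level freeness of $\mathrm{d}x_1,\ldots,\mathrm{d}x_d$ to a basic open. This is false: $A$ is an affinoid $K$-algebra, hence Noetherian but \emph{not} of finite type over $K$, and the module of algebraic K\"ahler differentials $\Omega^1_{A/K}$ is in general not finitely generated over $A$. (The finitely generated object one wants is the universal \emph{finite} differential module, which is what $\Omega^1(X)$ actually is.) Consequently the coherence argument does not run, and the claimed isomorphism $\Omega^1(X_{i,n})\cong A_i\otimes_{A_{g_i}}\Omega^1_{A_{g_i}/K}$ cannot be correct either, since the left side is finitely generated and the right side need not be — at best there is a canonical map that is not obviously an isomorphism.

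The paper avoids this as follows. It fixes a Zariski cover $\{\Spec A_{f_k}\}$ of $\mathbf{X}$ trivialising the \emph{analytic} module $\Omega^1(X)$, which is finitely generated projective over $A$ by smoothness, with free basis $s_{k1},\ldots,s_{kd}$ on each piece. On a basic open containing the chosen closed point $x$, the algebraic local parameters $x_1,\ldots,x_d$ have differentials expressible as $\mathrm{d}x_i = \sum_j m_{ij}s_{kj}$ with $M = (m_{ij})$ a matrix over $B(x)_{f_k}$. The key input, which replaces your coherence claim, is that a regular system of parameters of $\O_{\mathbf{X},x}$ is also a regular system of parameters of the \emph{analytic} local ring $\O_{X,x}$ (via \cite[Propositions 4.1/1, 4.1/2]{Bosch}), so $\mathrm{d}x_i$ form a free basis of $\Omega^1_{X,x} = \O_{X,x}\otimes_A\Omega^1(X)$. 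This forces $\det M$ to be a unit in $\O_{X,x}$ and hence in $\O_{\mathbf{X},x}$ (as the map of local rings is local and injective), so $M$ is invertible on a Zariski-open neighbourhood of $x$, and one shrinks $B(x)_{f_k}$ accordingly. Your spreading-out of $\mathcal{I}_{\mathfrak{m}} = (x_1\cdots x_r)$ and the analytification/refinement step at the end are both fine; the missing ingredient is the transition-matrix argument that transfers freeness of the analytic differential module without ever invoking coherence of $\Omega^1_{A/K}$.
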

\begin{proof}
Without loss of generality, we can assume that $X$ (and hence $\mathbf{X}$) is connected.\\
Let $x\in \mathbf{Z}$ be a closed point in $\Spec A$, and let $x_1, \dots, x_d\in \O_{\mathbf{X}, x}$ be a regular set of local parameters as in Definition \ref{algsnc}. By definition, there exists some Zariski open affine subscheme $U_x=\Spec B(x)\subset \mathbf{X}$ containing $x$ such that $x_1, \dots, x_d$ are defined on $U_x$, and $\mathbf{Z}\cap U_x=\{\prod_{j=1}^r x_j=0\}$.\\
\\
By smoothness, $\Omega^1(X)$ is a finitely generated projective $A$-module, so there exists $f_1, \dots, f_n\in A$ such that
\begin{equation*}
A_{f_k}\otimes_A \Omega^1(X)
\end{equation*}  
is a free $A_{f_k}$-module for each $k$, with free generating set $s_{k1}, \dots, s_{kd}$.\\
Fix $k$ such that $x\in \Spec A_{f_k}$. Then $B(x)_{f_k}\otimes_A \Omega^1(X)$ is freely generated by $s_{k1}, \dots, s_{kd}$, so there exist $M=(m_{ij})\in \mathrm{Mat}_{d\times d}(B(x)_{f_k})$ such that
\begin{equation*}
\mathrm{d}x_i=\sum_j m_{ij} s_{kj}.
\end{equation*}
As $x\in \mathbf{X}$ is closed, it corresponds to a unique point in $X=\Sp A$, which we also denote by $x$. 
As the $x_i$ form a regular system of local parameters in $\O_{\mathbf{X}, x}$, they also define a regular system of local parameters in the local ring $\O_{X, x}$ by \cite[Propositions 4.1/1, 4.1/2]{Bosch}, so that $\{\mathrm{d}x_i\}$ form a free generating set in $\Omega^1_{X, x}=\O_{X, x}\otimes_A \Omega^1(X)$. Thus $M$ becomes invertible as a matrix over $\O_{X, x}$ and hence over $\O_{\mathbf{X}, x}$, as $\O_{\mathbf{X}, x}\to \O_{X, x}$ is injective by \cite[Proposition 4.1/2]{Bosch}. Therefore $x$ is contained in a Zariski open affine subscheme $\Spec C(x)$ of $\mathbf{X}$ with the property that
\begin{enumerate}[(i)]
\item $x_1, \dots, x_d\in C(x)$, and $\mathrm{d}x_i$ form a free generating set of $C(x)\otimes_A \Omega^1(X)$.
\item $\mathbf{Z}\cap \Spec C(x)=\{\prod_{j=1}^r x_j=0\}$ for some $1\leq r\leq d$.
\end{enumerate}
As the $\Spec C(x)$ for varying $x\in \mathbf{Z}$ together with $\mathbf{X}\setminus \mathbf{Z}$ form a Zariski covering of $\mathbf{X}$, it follows from Proposition \ref{relana}.(v) that their analytifications form an admissible covering of $X$, and any refinement of this covering by affinoid subdomains has the desired property. 
\end{proof}
\begin{prop}
Let $Z$ be an algebraic snc divisor of a smooth rigid analytic $K$-variety $X$. Then there exists an admissible covering of $X$ by affinoids $X_i=\Sp A_i$ with the following property: for each $i$ with $Z\cap X_i\neq \emptyset$, there exist $x_{i1}, \dots, x_{id}\in A_i$ such that
\begin{enumerate}[(i)]
\item the elements $\mathrm{d}x_{i1}, \dots, \mathrm{d}x_{id}$ form a free generating set of $\Omega^1(X_i)$, and
\item the subvariety $Z\cap X_i$ is given as the vanishing set of $\prod_{j=1}^r x_{ij}$ for some $1\leq r\leq d$.
\end{enumerate}
\end{prop}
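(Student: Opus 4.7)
The plan is to generalise the argument of Lemma \ref{algsnc} from the affinoid case to arbitrary regular $B$-schemes $\mathbf{X}$. Since the statement concerns existence of an admissible cover and such covers can be combined, I would first fix an affine open cover $\mathbf{X} = \bigcup_k \Spec C_k$ (with each $C_k$ Noetherian but not necessarily affinoid). By Proposition \ref{relana}.(v) the family $\{(\Spec C_k)^{\mathrm{an}}\}$ is an admissible cover of $X$, so it suffices to produce the desired affinoid cover on each $(\Spec C_k)^{\mathrm{an}}$ separately. This reduces the problem to the case $\mathbf{X} = \Spec C$ with $\mathbf{Z} \subseteq \mathbf{X}$ an snc divisor.

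Next, for each closed point $\mathbf{x} \in \mathbf{Z}$, I would choose a regular system of local parameters $x_1, \dots, x_d \in \O_{\mathbf{X}, \mathbf{x}}$ with $\mathcal{I}_{\mathbf{x}} = (\prod_{i=1}^r x_i)$ (Definition \ref{algsnc}), extend them to a principal Zariski open $\Spec D(\mathbf{x}) \subseteq \mathbf{X}$, and, exploiting coherence of $\mathcal{I}$, shrink so that $\mathcal{I}|_{\Spec D(\mathbf{x})}$ is generated by $\prod_{i=1}^r x_i$ as an ideal in $D(\mathbf{x})$. By Proposition \ref{relana}.(v), the collection $\{(\Spec D(\mathbf{x}))^{\mathrm{an}}\}$ together with $(\mathbf{X} \setminus \mathbf{Z})^{\mathrm{an}}$ is an admissible cover of $X$; refining by affinoid subdomains, pick any affinoid $Y = \Sp A$ sitting in some $(\Spec D(\mathbf{x}))^{\mathrm{an}}$. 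The composition $Y \hookrightarrow (\Spec D(\mathbf{x}))^{\mathrm{an}} \xrightarrow{\eta} \Spec D(\mathbf{x})$ gives a ring homomorphism $D(\mathbf{x}) \to A$; let $\overline{x}_i$ denote the image of $x_i$. Property (ii) is now immediate from Proposition \ref{relana}.(iii): the closed analytic subset $Z \cap Y$ coincides with the vanishing locus of $\prod_{i=1}^r \overline{x}_i$ in $Y$.

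For property (i), I would follow the argument at the end of the proof of Lemma \ref{algsnc}: at any closed analytic point $y \in Y$ lying over $\mathbf{x}$, injectivity of the local ring map $\O_{\mathbf{X}, \mathbf{x}} \to \O_{Y, y}$ forces $\overline{x}_1, \dots, \overline{x}_d$ to be a regular system of parameters at $y$. Smoothness of $Y$ then implies that $d\overline{x}_1, \dots, d\overline{x}_d$ form a free basis of $\Omega^1(Y)_y$, and coherence of $\Omega^1_Y$ propagates this freeness to a sufficiently small affinoid subdomain of $Y$, which I would take as the final refined affinoid. The hard part is exactly this transfer argument in the relative setting: one must verify that the injectivity of $\O_{\mathbf{X}, \mathbf{x}} \to \O_{Y, y}$ used in the affinoid case continues to hold when $\Spec C$ is only a Noetherian $B$-scheme rather than the spectrum of an affinoid algebra. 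This should follow from flatness of the analytification together with smoothness of the target affinoid $Y$, but needs to be checked carefully.
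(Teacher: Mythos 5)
Your strategy — to directly generalise the argument of the affinoid Lemma to arbitrary regular $B$-schemes — is reasonable in spirit, but it leaves exactly the crucial technical step unproved, and you acknowledge this yourself at the end. The gap is real: for the affinoid Lemma the injectivity of $\O_{\mathbf{X}, x}\to \O_{X,x}$ comes from \cite[Propositions 4.1/1, 4.1/2]{Bosch}, which applies when $\mathbf{X}=\Spec A$ is the spectrum of the same affinoid algebra $A$ as $X=\Sp A$. In your setup the source is the local ring of a Noetherian $B$-scheme $\Spec D(\mathbf{x})$, not of an affinoid, and you cannot simply cite that reference; you would need to establish the comparison of local rings (and of regular systems of parameters) across the relative analytification map, which is a genuinely different statement.

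The paper sidesteps this by not attempting a direct argument at all. Instead it reduces the Proposition to the already-established affinoid Lemma. Concretely: it covers $\mathbf{X}$ by affines $\Spec B_i$ on which $\mathbf{Z}$ is principal, and then uses the explicit construction of the relative analytification — $\mathbf{U}_i^{\mathrm{an}}$ is an admissible union of affinoids $\Sp B_{i,n}$ for Banach completions $B_{i,n}$ of $B_i$ (\cite[Construction 1.2.1]{Schoutens}). The essential technical input then becomes \cite[Claim 1.2.6]{Schoutens}, which says that the morphism of schemes $\Spec B_{i,n}\to \Spec B_i$ transfers regular systems of local parameters; this makes $V(f_i)\subset \Spec B_{i,n}$ an snc divisor of the \emph{affinoid} scheme $\Spec B_{i,n}$, so the affinoid Lemma applies directly. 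Thus your instinct that the missing ingredient "should follow from flatness of the analytification together with smoothness of $Y$" is essentially materialised by the Schoutens Claim, but the cleanest way to invoke it is via the intermediate affinoid scheme $\Spec B_{i,n}$ and then redirecting through the Lemma, rather than trying to run the Lemma's argument from scratch in the general relative situation. I would suggest you restructure the proof along those lines: first reduce to "each piece of a suitable affinoid cover sees $Z$ as the analytification of an snc divisor on the corresponding affinoid $\Spec$", and only then quote the Lemma.
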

\begin{proof}
By the above Lemma, it is sufficient to show that $X$ admits an admissible covering by affinoid subspaces $Y_i=\Sp B_i$ such that $Z\cap Y_i$ is obtained as the analytification of an snc divisor on $\Spec B_i$. \\
\\
Let $B$ be an affinoid $K$-algebra, $\mathbf{X}$ a $B$-scheme with snc divisor $\mathbf{Z}$ such that $\mathbf{X}^{\mathrm{an}}\cong X$, $\mathbf{Z}^{\mathrm{an}}\cong Z$. Let $(\mathbf{U}_i)$ be an affine covering of $\mathbf{X}$ such that $\mathbf{Z}\cap \mathbf{U}_i$ is given by the vanishing of a single element $f_i\in B_i$, where $\mathbf{U}_i=\Spec B_i$.\\
\\
By Proposition \ref{relana}.(v), $(\mathbf{U}_i^{\mathrm{an}})_i$ is an admissible covering of $X$. Recall from \cite[Construction 1.2.1]{Schoutens} that the analytification $\mathbf{U}_i^{\mathrm{an}}$ is constructed as the union of $\Sp B_{i, n}$ for various Banach completions $B_{i, n}$ of $B_i$. Thus by definition, $(\Sp B_{i, n})_{i, n}$ is also an admissible covering of $X$.\\
There are natural morphisms of schemes
\begin{equation*}
\phi_{i, n}: \Spec B_{i, n}\to \Spec B_i=\mathbf{U}_i\subseteq \mathbf{X}.
\end{equation*}
Now $\Spec B_{i, n}$ is a regular Noetherian scheme by \cite[Proposition 7.3.2/8]{BGR}. Let $\mathbf{Z}_{i, n}$ denote the vanishing set of $\phi_{i, n}^{\sharp}(f_i)\in B_{i, n}$. Then $\mathbf{Z}_{i, n}$ is an snc divisor, as for any $x\in \mathbf{Z}_{i, n}$, a regular set of local parameters in $\O_{\mathbf{U}_i,\phi_{i, n}(x)}$ gives a regular set of local parameters of $\O_{\Spec B_{i, n}, x}$ by \cite[Claim 1.2.6]{Schoutens}.\\
\\
Since $\mathbf{Z}_{i, n}^{\mathrm{an}}=\{f_i=0\}\subset \Sp B_{i, n}$ by Proposition \ref{relana}.(iii), it follows that $\mathbf{Z}_{i, n}^{\mathrm{an}}=\mathbf{Z}^{\mathrm{an}}\cap \Sp B_{i, n}=Z\cap \Sp B_{i, n}$, as required.
\end{proof}

\subsection{\bf{Proposition}}\label{snc}
Let $X$ be a smooth rigid analytic $K$-variety. Let $Z$ be an algebraic snc divisor on $X$, and let $j: U\to X$ be its complement. Then $j_*(\M|_U)$ is a coadmissible $\w \D_X$-module for any integrable connection $\M$ on $X$.

\begin{proof}
By Proposition \ref{algsnc}, we can assume that $X\cong \Sp A$ is a smooth affinoid with free tangent sheaf, and also that there exist elements $x_1, \dots, x_d\in A$ such that $\{\mathrm{d}x_i\}$ form a free generating set in $\Omega^1_X$ and $Z=\{\prod_{i=1}^r x_i=0\}$ for some $1\leq r\leq d$.\\
We denote by $\{\partial_i\}$ the derivations forming the basis in $\T(X)$ dual to $\{\mathrm{d}x_i\}$, so that 
\begin{equation*}
\partial_i(x_j)=\delta_{ij}, \ [\partial_i, \partial_j]=0
\end{equation*} 
for all $i, j$. We write $f=\prod_{i=1}^r x_i$.\\
\\
Let $M=\M(X)$, which we can assume to be a free $A$-module of finite rank, and let $m\in M$. We will now show that all roots of the $b$-function of $m$, viewed as an element in $M[f^{-1}]$, are integers, so that we can apply Theorem \ref{recallBitoun}.(i).\\
\\
We write $\partial_i^{[k]}=\frac{\partial_i^k}{k!}$ for any non-negative integer $k$. By Noetherianity, there exists some natural number $n$ such that $\partial_i^{[n]}\cdot m$ is contained in the $A$-submodule generated by $m, \partial_i \cdot m, \dots, \partial_i^{[n-1]}\cdot m$, so that there exists a monic polynomial $\sum a_{i, j}x^j\in A[x]$ of degree $n$ satisfying
\begin{equation*}
\sum_{j=0}^n a_{i, j}\partial_i^{[j]}\cdot m=0.
\end{equation*}
Now in $\D(X)[f^{-1}]$, one obtains as usual (see e.g. \cite[equation 2.0.2]{Berthelot})
\begin{align*}
\partial_i^{[j]}f^s&=\sum_{k=0}^j \partial_i^{[j-k]}(f^s)\partial_i^{[k]} \\
&=\sum_{k=0}^j \binom{s}{j-k} x_i^{k-j} f^s \partial_i^{[k]} \ \forall s\in \mathbb{Z},
\end{align*}
which can be verified by straightforward induction on $j$.\\
By writing $m=f^sf^{-s} m$, we thus have in $M[f^{-1}]$ the following equations for any integer $s$:
\begin{align*}
0&= x_i^{n-1}f^{-s}\sum_{j=0}^n a_{i, j}\partial_i^{[j]}\cdot m\\
&= \sum_{j=0}^n x_i^{n-j} a_{i, j} \left( x_i^{j-1} f^{-s} \partial_i^{[j]}\cdot m\right)\\
&= \sum_{j=0}^n x_i^{n-j}a_{i, j} \left(\sum_{k=0}^j \binom{s}{j-k}x_i^{k-1} \partial_i^{[k]}\cdot f^{-s}m\right)\\
&= \sum_{k=0}^n \sum_{j=k}^n \binom{s}{j-k}x_i^{n-j+k-1} a_{i, j} \partial_i^{[k]}\cdot f^{-s}m.
\end{align*}
As $a_{i, n}=1$, we obtain
\begin{equation*}
\binom{s}{n}x_i^{-1}f^{-s}m=-\sum\nolimits'\binom{s}{j-k}x_i^{n-j+k-1}a_{i, j}\partial_i^{[k]}\cdot f^{-s}m,
\end{equation*}
where $\sum'$ denotes the sum over all pairs $(j, k)$, $0\leq k\leq j\leq n$, $(j, k)\neq (n, 0)$.\\
\\
Setting $P_i(s)=-\sum' \binom{s}{j-k}x_i^{n-j+k-1}a_{i, j} \partial_i^{[k]}\in \mathcal{D}(X)[s]$, we note that $P_i(s)$ is contained in the subring $A[\partial_i, s]$ and hence commutes with $x_j$ for any $j\neq i$. We thus conclude from the above that
\begin{equation*}
P_i(s) \cdot x_1^{-1}\dots x_{i-1}^{-1} f^{-s}m=\binom{s}{n} x_1^{-1} \dots x_i^{-1} f^{-s}m
\end{equation*}
for any $1\leq i\leq r$, and by induction
\begin{equation*}
\left(\prod_{i=1}^r P_i(s)\right)\cdot f^{-s}m=\binom{s}{n}^r f^{-s-1}m.
\end{equation*}
In particular, $\binom{s}{n}^r\in I(m)$. Thus the $b$-function of $m$ is a factor of $\binom{s}{n}^r$, and all its roots are integers. As integers are of positive type by \cite[Proposition 13.1.5]{Kedlaya}, applying Theorem \ref{recallBitoun}.(i) proves the result.
\end{proof}
\section{Zariski open embeddings: The general case}
\subsection{Cohomology on hyperplane complements}\label{weaklyStein}
We introduce the following notation: if $X=\Sp A$ and $f\in A$ is non-constant, we denote by $X_f$ the admissible open subspace given by the non-vanishing of $f$. Note that $\O(X_f)\neq A_f$, but rather
\begin{equation*}
\O(X_f)=\varprojlim A\langle \pi^nf^{-1}\rangle.
\end{equation*}
The following is a partial generalisation of \cite[Satz 2.4.2]{Kiehl}.
\begin{prop}
Let $X=\Sp A$ be smooth and let $f\in A$ be non-constant. If $\M$ is a coadmissible $\w{\D}$-module on $X_f$ then $\mathrm{H}^i(X_f, \M)=0$ for every $i>0$.
\end{prop}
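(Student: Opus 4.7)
The plan is to treat $X_f$ as a weakly Stein space in the sense of Kiehl and reduce to a Mittag--Leffler-type vanishing for coadmissible modules. Write $X_f = \bigcup_n U_n$ where $U_n := \Sp A\langle \pi^n f^{-1}\rangle$. This is an admissible cover of $X_f$ by affinoid subdomains with $U_n \subseteq U_{n+1}$; indeed $\O(X_f) = \varprojlim \O(U_n)$ with dense transition maps essentially by construction. Note also that any finite intersection $U_{n_0} \cap \cdots \cap U_{n_p}$ collapses to $U_{\min(n_j)}$, so all intersections arising in a \v{C}ech-style computation are themselves affinoids from the family.

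The first step is an acyclicity input. Each $U_n$ is affinoid, so by the Kiehl-type theorem of \cite{DhatOne} (Theorem 8.2 and the discussion following it), $H^j(U_n, \M|_{U_n}) = 0$ for every $j \geq 1$. Combined with the admissibility of the cover $(U_n)$, this gives a standard short exact sequence
\begin{equation*}
0 \to \varprojlim\nolimits^1 H^{i-1}(U_n, \M) \to H^i(X_f, \M) \to \varprojlim H^i(U_n, \M) \to 0
\end{equation*}
for each $i \geq 1$. For $i \geq 2$, both flanking terms vanish because $H^{i-1}(U_n,\M) = H^i(U_n,\M) = 0$, so $H^i(X_f, \M) = 0$ immediately. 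The entire problem therefore reduces to showing $\varprojlim^1 M_n = 0$, where $M_n := \M(U_n)$ equipped with its canonical Fréchet topology as a coadmissible $\w{\D}(U_n)$-module.

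For the $\varprojlim^1$ vanishing, I would invoke the classical Mittag--Leffler theorem for inverse systems of Fréchet spaces along continuous transition maps with dense image. The continuity of the restriction $r_{n+1,n}\colon M_{n+1} \to M_n$ is clear. For density, the coadmissibility of $\M$ on $X_f$ yields a canonical identification $M_n \cong \w{\D}(U_n)\h\otimes_{\w{\D}(U_{n+1})} M_{n+1}$ (using \cite[Corollary A.6]{Bitoun} to identify $\h\otimes$ with the completed tensor product defined in \cite{DhatOne}), and this — combined with the density of $\w{\D}(U_{n+1}) \to \w{\D}(U_n)$ arising from their shared Fréchet--Stein presentations — forces the image of $M_{n+1}$ to be dense in $M_n$. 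Standard Mittag--Leffler (cf.\ \cite[\S 3]{ST}) then finishes the proof.

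The main technical obstacle is the density statement for the restriction maps on the coadmissible modules: unlike the analogous fact for $\w{\D}$ itself (which is essentially built into the Fréchet--Stein structure), the density of $M_{n+1} \to M_n$ has to be extracted from the interplay between the Fréchet topology on each $M_n$ and the completed scalar extension formula. Once this density is in hand, the remainder of the argument is formal.
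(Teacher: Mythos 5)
Your overall strategy — compute via the nested admissible cover $U_n = \Sp A\langle \pi^n f^{-1}\rangle$, use affinoid acyclicity, and finish with Mittag--Leffler — is a genuinely different route from the paper's. The paper does \emph{not} pass through $\varprojlim^1$ of the Fr\'echet modules $\M(U_n)$. Instead, it constructs a Fr\'echet--Stein presentation $\w{\D}(X_f) = \varprojlim D_n$ adapted to the cover (each $D_n = \hK{U(\L_n)}$ a Banach algebra over $U_n$), takes the finitely generated Banach modules $M_n = D_n\otimes_{\w{\D}(U_n)}\M(U_n)$, observes that $M_{n+1}\to M_n$ has dense image by Schneider--Teitelbaum's Theorem A (which is precisely a statement about the Banach layers of a coadmissible module), and then applies the EGA III Mittag--Leffler result for inverse systems of \emph{complexes} (\cite[Proposition 13.2.3]{EGA3}) to the tower of finite \v{C}ech complexes $\check{C}^\bullet(\mathfrak{U}_n, \M_n)$ over $U_n$. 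Your approach, in contrast, works with the Fr\'echet modules $\M(U_n)$ directly and uses Fr\'echet Mittag--Leffler.

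The gap in your proposal is exactly where you flag it, but the justification you offer is not correct as stated. The density of the \emph{restriction} map $\w{\D}(U_{n+1}) \to \w{\D}(U_n)$ is \emph{not} ``essentially built into the Fr\'echet--Stein structure'': the Fr\'echet--Stein axioms give density of the transition maps between the Banach layers \emph{within a single} $\w{\D}(U_n)$, not density of the restriction map between two different Fr\'echet--Stein algebras attached to different affinoids. Likewise, ST Theorem A gives density of $\M(U_n) \to D_m\otimes\M(U_n)$ for a fixed $U_n$, not of $\M(U_{n+1})\to\M(U_n)$. The latter density is plausible — one can, for instance, observe that $\O(X)[1/f]$ maps into $\O(U_n)$ with dense image for every $n$, and then try to propagate density through the enveloping-algebra completions and the formula $\M(U_n)\cong\w{\D}(U_n)\h\otimes_{\w{\D}(U_{n+1})}\M(U_{n+1})$ — but this requires a nontrivial chain of lemmas not present in the paper, whereas the paper's Banach-level construction of $D_n$ and $M_n$ is engineered so that the needed density is literally ST Theorem A. Until that density is established, the $\varprojlim^1$ vanishing in degree $1$ is not yet proved, so the argument as written does not close.
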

\begin{proof}
Let $\A\subset A$ be an affine formal model and let $\L\subset \T(X)$ be an $\A$-Lie lattice. Then after rescaling $f$ if necessary, we can assume without loss of generality that $f\in \A$, so that $U_m:=X(\pi^mf^{-1})$ is $\pi^n\L$-accessible for any $n\geq m$.\\
We write $D=\w{\D}(X_f)$, $D_n=\hK{U(\L_n)}$, where $\L_n$ is the image of $\A\langle \pi^nf^{-1}\rangle \otimes_{\A} \pi^n\L$ inside $\T(U_n)$. Let $\D_n$ be the sheaf of algebras on the site of $\L_n$-admissible subspaces of $U_n$ given by $\O\h{\otimes}_{\O(U_n)} D_n$. As the maps $D_{n+1}\to \D_{n+1}(U_n)\to D_n$ are flat by \cite[Theorem 4.10]{Bode} and \cite[Theorem 6.6]{DhatOne}, $D=\varprojlim D_n$ exhibits $D$ as a Fr\'echet--Stein algebra, and if $\M$ is a coadmissible $\w{\D}_{X_f}$-module then $\M(X_f)=\varprojlim M_n$ for $M_n=D_n\otimes_{\w{\D}(U_n)}\M(U_n)$ makes $\M(X_f)$ a coadmissible $D$-module.\\
\\
Let $\M_n$ be the sheaf $\Loc M_n$ on the site of $\L_n$-admissible subspaces of $U_n$, a coherent $\D_n$-module (see \cite[\textsection 5.1]{DhatOne}). Let $\mathfrak{U}=(U_i)$ and let $\mathfrak{U}_n=\{U_1, \dots, U_n\}$. Note that by \cite[Proposition 9.5]{DhatOne}
\begin{equation*}
\mathrm{H}^i(X_f, \M)\cong\check{\mathrm{H}}^i(\mathfrak{U}, \M)
\end{equation*} 
for any $i$.\\
Consider the complexes $C(n)^{\bullet}=\check{C}^{\bullet}(\mathfrak{U}_n, \M_n)$ with natural morphisms of complexes $C(n+1)^\bullet\to C(n)^\bullet$ induced by restriction. By \cite[Theorem 4.16]{Bode},
\begin{equation*}
\check{\mathrm{H}}^i(\mathfrak{U}_n, \M_n)=0 \ \forall \ i>0, \ \forall n,
\end{equation*}
and $\check{\mathrm{H}}^0(\mathfrak{U}_{n+1}, \M_{n+1})=M_{n+1}\to \check{\mathrm{H}}^0(\mathfrak{U}_n, \M_n)=M_n$ is a continuous morphism of Banach spaces with dense image for each $n$ by \cite[\textsection 3, Theorem A]{ST}.\\
\\
Thus by \cite[Proposition 13.2.3, Remarques 13.2.4]{EGA3}, 
\begin{equation*}
\mathrm{H}^i(\varprojlim C(n)^\bullet)\cong \varprojlim \mathrm{H}^i(C(n)^\bullet)
\end{equation*}
for each $i$.\\
It hence remains to show that $\varprojlim C(n)^\bullet\cong \check{C}^\bullet(\mathfrak{U}, \M)$. But
\begin{equation*}
\varprojlim_n C(n)^\bullet=\varprojlim_{\substack{m, n\\m\geq n}} \check{C}^\bullet(\mathfrak{U}_n, \M_m)\cong \varprojlim_n \check{C}^\bullet(\mathfrak{U}_n, \M)\cong \check{C}^\bullet(\mathfrak{U}, \M),
\end{equation*} 
as required.
\end{proof}

\begin{cor}
Let $X$ be a smooth affinoid $K$-space and let $j:U\to X$ be a Zariski open embedding. Let $\mathfrak{U}=(U_i)$ be an admissible covering of $U$ with the following property: for each $i$, there exists a smooth morphism of affinoids $V_i=\Sp A_i\to X$ fitting into a commutative diagram
\begin{equation*}
\begin{xy}
\xymatrix{
U_i\ar[r] \ar[rd]& V_i\ar[d]\\ & X
}
\end{xy}
\end{equation*}
which identifies $U_i$ with $(V_i)_{f_i}$ for some non-constant $f_i\in A_i$.\\
If $\M$ is a coadmissible $\w{\D}$-module on $U$ then $\mathrm{R}^ij_*\M(X)\cong \check{\mathrm{H}}^i(\mathfrak{U}, \M)$ for any $i\geq 0$.
\end{cor}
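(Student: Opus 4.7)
The plan is to establish that $\mathfrak{U}$ is a Leray (acyclic) cover of $U$ for $\M$, and then to conclude via the standard Čech-to-derived functor spectral sequence in combination with Proposition \ref{weaklyStein}.

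First, for each tuple $\underline{i}=(i_0,\dots,i_n)$, I will identify the finite intersection $U_{i_0}\cap\cdots\cap U_{i_n}$ with a subspace of the form $W_g$, where $W$ is a smooth affinoid and $g\in \O(W)$. The key construction is the affinoid fibre product
\[
W_{\underline{i}}:=V_{i_0}\times_X V_{i_1}\times_X\cdots\times_X V_{i_n},
\]
which is a smooth affinoid over $X$ (by stability of smoothness under base change and composition) with projection maps $p_k\colon W_{\underline{i}}\to V_{i_k}$. Setting $g_{\underline{i}}:=\prod_{k=0}^n p_k^*(f_{i_k})\in \O(W_{\underline{i}})$, the identifications $U_{i_k}\cong (V_{i_k})_{f_{i_k}}$ together with the fact that each $U_{i_k}\hookrightarrow X$ is an open immersion (so that iterated $X$-fibre products among the $U_{i_k}$ agree with their iterated intersections in $X$) then yield
\[
U_{i_0}\cap\cdots\cap U_{i_n}\;\cong\;(W_{\underline{i}})_{g_{\underline{i}}}.
\]

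Granted this identification, Proposition \ref{weaklyStein} directly gives $\mathrm{H}^q((W_{\underline{i}})_{g_{\underline{i}}},\M)=0$ for all $q>0$ whenever $g_{\underline{i}}$ is non-constant, while the boundary cases (where $g_{\underline{i}}$ is a unit, so $(W_{\underline{i}})_{g_{\underline{i}}}=W_{\underline{i}}$ is itself a smooth affinoid, or where the intersection is empty) are covered by the known acyclicity of coadmissible $\w{\D}$-modules on smooth affinoids (see \cite[Theorem 8.4]{DhatOne}) and by triviality. Hence $\mathfrak{U}$ is a Leray cover for $\M$, and the standard Čech-to-derived functor spectral sequence
\[
E_2^{p,q}=\check{\mathrm{H}}^p(\mathfrak{U},\underline{H}^q(\M))\;\Longrightarrow\;\mathrm{H}^{p+q}(U,\M)
\]
degenerates to yield $\check{\mathrm{H}}^i(\mathfrak{U},\M)\cong \mathrm{H}^i(U,\M)$. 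The last identification $\mathrm{R}^i j_*\M(X)\cong \mathrm{H}^i(U,\M)$ follows from $j$ being an open immersion: $j^{-1}$ is exact, so $j_*$ preserves injectives, and $\Gamma(X,-)\circ j_*=\Gamma(U,-)$ gives the standard comparison.

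The main obstacle lies in the geometric identification of the second paragraph: one must verify carefully that the fibre product construction realises the intersections as non-vanishing loci on smooth affinoids over $X$, respecting the maps to $X$. The remaining steps are then routine — Proposition \ref{weaklyStein} handles the cohomological vanishing, and standard Čech-theoretic arguments deliver the isomorphism.
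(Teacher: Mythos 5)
Your proof takes essentially the same route as the paper. The paper's argument is exactly: note that $V_{i_0}\times_X\cdots\times_X V_{i_n}$ is a smooth affinoid on which the intersection $U_{i_0}\cap\cdots\cap U_{i_n}$ is identified with the non-vanishing locus of $f_{i_0}\cdots f_{i_n}$, apply Proposition \ref{weaklyStein} to conclude that all finite intersections are $\M$-acyclic, and then invoke the Leray/Cartan acyclicity lemma (the paper cites \cite[Tag 03F7]{stacks}). You do the same, just with a bit more detail about the fibre product construction and the boundary cases, and you also spell out the identification $\mathrm{R}^i j_*\M(X)\cong\mathrm{H}^i(U,\M)$, which the paper leaves implicit in its citation of the Stacks project.

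One small caveat worth flagging: the justification you give for the final identification — that $j_*$ preserves injectives and $\Gamma(X,-)\circ j_*=\Gamma(U,-)$ — only delivers the Grothendieck spectral sequence $\mathrm{H}^p(X,\mathrm{R}^qj_*\M)\Rightarrow\mathrm{H}^{p+q}(U,\M)$; to pass from its $E_2^{0,q}$-term to the abutment one still needs the $p>0$ columns to die, rather than getting the isomorphism for free. The paper glosses over the same point, so this is not a defect relative to the paper, but as written the sentence claims more than the stated reasons provide.
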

\begin{proof}
Note that $V_{i_1}\times_X \dots \times_X V_{i_r}$ is a smooth affinoid such that $U_{i_1}\cap \dots \cap U_{i_r}$ can be identified with $(V_{i_1}\times_X \dots \times_X V_{i_r})_{f_{i_1}\dots f_{i_r}}$, so it follows from the Proposition that for $i\geq 1$,
\begin{equation*}
\mathrm{H}^i(U_{i_1}\cap \dots \cap U_{i_r}, \M)=0
\end{equation*}
for any $i_1, \dots, i_r$. Thus the result follows from \cite[Tag 03F7]{stacks}.
\end{proof}
\subsection{Completed tensor products and sections of $\w{\D}$}\label{ctp}
In this subsection, we use some basic results from \cite{Bitoun} about completed tensor products to describe sections of $\w{\D}$ over Zariski open subspaces.
\begin{lem}
Let $X=\Sp A$ be an affinoid $K$-space with free tangent sheaf. Then the functor $-\h{\otimes}_A \w{\D}(X)$ is strict exact on Fr\'echet $A$-modules.
\end{lem}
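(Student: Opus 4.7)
The approach is to exploit the Fréchet--Stein presentation $\w{\D}(X) = \varprojlim_n D_n$ together with the PBW theorem, reducing strict exactness to the elementary fact that completed tensor product with a topologically free Banach $A$-module is strict exact on Fréchet $A$-modules.

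Since $\T(X)$ is free, I would fix an affine formal model $\A \subset A$ and a free $\A$-Lie lattice $\L$ with basis $x_1, \ldots, x_d$, so that each $D_n = \hK{U(\pi^n\L)}$ is a Banach $A$-algebra with $\w{\D}(X) = \varprojlim_n D_n$ Fréchet--Stein. The PBW theorem applied to $\pi^n\L$ identifies the underlying Banach $A$-module of $D_n$ with the $c_0$-type space of $A$-valued null sequences indexed by ordered monomials in the $x_i$. In particular, $D_n$ is topologically free as a Banach $A$-module; equivalently, $D_n \cong A \, \h{\otimes}_K \, E_n$ for a $K$-Banach space $E_n$ with an orthonormal basis.

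Step 1 is to establish strict exactness of $-\h{\otimes}_A D_n$ on Fréchet $A$-modules for each $n$. Topological freeness of $D_n$ gives a natural identification of $M \h{\otimes}_A D_n$ with the space of null sequences in $M$ indexed by the PBW monomials, for every Fréchet $A$-module $M$. The functor sending $M$ to its space of null sequences is strict exact: given a strict short exact sequence $0 \to M_1 \to M_2 \to M_3 \to 0$ of Fréchet modules, the openness of the strict surjection $M_2 \to M_3$ allows one to lift null sequences in $M_3$ to null sequences in $M_2$, which yields surjectivity with the correct quotient topology, while injectivity and exactness in the middle are termwise.

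Step 2 is the passage to $\varprojlim_n$: writing $M \h{\otimes}_A \w{\D}(X) = \varprojlim_n M \h{\otimes}_A D_n$ via \cite[Corollary A.6]{Bitoun}, the Fréchet--Stein property ensures that the transition maps $D_{n+1} \to D_n$ have dense image, so the induced projective system of tensor products satisfies a Mittag--Leffler condition, and strict exactness passes to the inverse limit. The main obstacle will be this final step: carefully managing the interchange of inverse limits (over $n$ for the Fréchet--Stein structure, and over the Banach presentation of $M$) and verifying Mittag--Leffler in a form strong enough to preserve strictness. This is where the completed tensor product machinery of \cite{Bitoun} and the Fréchet--Stein formalism of \cite{ST} will be essential.
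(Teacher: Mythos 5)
Your proposal is correct and follows essentially the same path as the paper: the PBW theorem identifies $\w{\D}(X)$ as a left $A$-module with the Fr\'echet space of convergent power series in $d$ variables over $A$ (equivalently, $\O(X\times\mathbb{A}^{d,\an})$), and strict exactness is established first at the Banach level and then passed through the inverse limit using the dense-image transition maps. The paper simply outsources both steps to \cite[Propositions 1.2.2 and 1.2.6.(2)]{Kisin99} rather than re-deriving the lifting of null sequences and the topological Mittag--Leffler argument from scratch; the reservation you express about the final interchange of limits is unfounded, as it is exactly the content of those citations.
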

\begin{proof}
As a left $A$-module, $\w{\D}(X)$ is isomorphic to $\O(X\times \mathbb{A}^{d, \mathrm{an}})$, where $d$ is the rank of $\T_X$. The result now follows from \cite[Propositions 1.2.2 and 1.2.6.(2)]{Kisin99}.
\end{proof}
\begin{prop}
Let $X=\Sp A$ be an affinoid $K$-space with free tangent sheaf, and let $U$ be a Zariski open subspace. Then the natural morphism
\begin{equation*}
\O(U)\h{\otimes}_A \w{\D}(X)\to \w{\D}(U)
\end{equation*} 
is an isomorphism of locally convex $\O(U)$-modules.
\end{prop}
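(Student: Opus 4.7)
The plan is to reduce to the case $U = X_g$ of a single hyperplane complement via a \v{C}ech comparison, and then to verify that case by writing both sides as iterated inverse limits of affinoid algebras.

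Since $U$ is Zariski open in the affinoid $X$, I would choose a finite covering $U = \bigcup_{i=1}^r X_{f_i}$ by hyperplane complements. The sheaf properties of $\O_X$ and $\w{\D}_X$ yield exact sequences of Fr\'echet modules
\begin{equation*}
0 \to \O(U) \to \prod_i \O(X_{f_i}) \to \prod_{i<j} \O(X_{f_i f_j})
\end{equation*}
and
\begin{equation*}
0 \to \w{\D}(U) \to \prod_i \w{\D}(X_{f_i}) \to \prod_{i<j} \w{\D}(X_{f_i f_j}),
\end{equation*}
in which the first map of each is a strict embedding. Applying the preceding lemma to the first sequence and comparing with the second via the natural morphism identifies $\O(U)\h{\otimes}_A \w{\D}(X) \to \w{\D}(U)$ with the induced map on kernels. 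Since every finite intersection $X_{f_{i_1}\cdots f_{i_k}}$ is itself a hyperplane complement, this reduces the proposition to the case $U = X_g$ for a single non-constant $g \in A$.

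For $U = X_g$, I would exploit the description of $\w{\D}(X)$ from the proof of the preceding lemma: as a topological left $A$-module, $\w{\D}(X) \cong A \h{\otimes}_K \O(\mathbb{A}^{d, \an})$, where $d = \rk \T(X)$ and $\O(\mathbb{A}^{d, \an}) = \varprojlim_m K\langle \pi^m y_1, \dots, \pi^m y_d\rangle$. Combining the Stein-type admissible cover $X_g = \bigcup_n \Sp A\langle \pi^n g^{-1}\rangle$ with the sheaf property of $\w{\D}_X$ then gives
\begin{equation*}
\w{\D}(X_g) \cong \varprojlim_{n,m} A\langle \pi^n g^{-1}, \pi^m y_1, \dots, \pi^m y_d\rangle.
\end{equation*}
On the other hand, applying the identification of $\w{\D}(X)$ above and using that completed tensor products of Fr\'echet spaces commute with their Banach inverse-limit presentations,
\begin{equation*}
\O(X_g)\h{\otimes}_A \w{\D}(X) \cong \O(X_g)\h{\otimes}_K \O(\mathbb{A}^{d, \an}) \cong \varprojlim_{n,m} A\langle \pi^n g^{-1}, \pi^m y_1, \dots, \pi^m y_d\rangle,
\end{equation*}
and the natural comparison map is the identity on this presentation.

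The main obstacle lies in the topological bookkeeping: confirming that the completed tensor product commutes correctly with the Fr\'echet inverse-limit structures on both sides, and ensuring that all comparison maps are strict so that the resulting isomorphism is topological and not merely algebraic. These points should follow from \cite[Proposition 1.2.6.(2)]{Kisin99} cited in the preceding lemma, together with the observation that the transition maps in the relevant inverse systems have dense image.
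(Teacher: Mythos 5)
Your proof is correct and follows essentially the same strategy as the paper: both reduce via the \v{C}ech complex of the cover by hyperplane complements (using the strict exactness of $-\h{\otimes}_A\w{\D}(X)$ from the preceding lemma), and both establish the single hyperplane complement case by matching Banach inverse-limit presentations on the two sides. The paper handles the base case $U=X_f$ by invoking the sheaves $\D_n$ on $\pi^n\L$-admissible subdomains and \cite[Lemma A.2.(iv)]{Bitoun} to commute $\h{\otimes}$ with the limits, whereas you compute directly with the description $\w{\D}(Y)\cong\O(Y)\h{\otimes}_K\O(\mathbb{A}^{d,\an})$ and an explicit iterated limit $\varprojlim_{n,m}A\langle\pi^ng^{-1},\pi^my\rangle$; this is the same content in more concrete form, and the commutation of $\h{\otimes}$ with the reduced inverse limits that you flag as needing justification is exactly what the paper's Bitoun citation supplies.
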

\begin{proof}
First let $U=X_f$ for some $f\in A$. Let $\A$ be an affine formal model, and let $\L$ be an $\A$-Lie lattice in $\T(X)$. Then \cite[\textsection 3.3, Theorem 3.5]{DhatOne} gives rise to sheaves of $K$-algebras $\D_n$ on the site of $\pi^n\L$-admissible affinoid subdomains of $X$ such that for any $\pi^n\L$-admissible affinoid subdomain $Y=\Sp B\subset X$, we have $\w{\D}(Y)\cong \varprojlim_{m\geq n} \D_m(Y)$ and $\D_n(Y)\cong B\h{\otimes}_A \D_n(X)$. Without loss of generality, $f\in \A$, so that $U_n:=X(\pi^nf^{-1})$ is $\pi^n\L$-admissible. Now
\begin{equation*}
\w{\D}(U)\cong \varprojlim \w{\D}(U_n)\cong \varprojlim \D_n(U_n)\cong \varprojlim \left(\O(U_n)\h{\otimes}_A \D_n(X)\right), 
\end{equation*}
so that the result follows from \cite[Lemma A.2.(iv)]{Bitoun}.\\ 
\\
If $U$ is the complement of $V(f_1, \dots, f_r)$, we consider the Cech complex $\check{C}^\bullet((X_{f_i}), \O)$. Applying $-\h{\otimes}_A \w{\D}(X)$ and invoking the Lemma then finishes the proof in general.
\end{proof}
We also emphasize that by \cite[Corollary A.6]{Bitoun} the coadmissible tensor product $\w{\otimes}$ defined in \cite[section 7.3]{DhatOne} agrees with the completed tensor product $\h{\otimes}$ with respect to the canonical Fr\'echet structures on coadmissible modules.

\subsection{An algebraic analogue}\label{classicalpushforward}
In order to make the arguments below more accessible, we recall in some detail the following basic property of coherent $\D$-modules on algebraic varieties.
\begin{prop}
Let $k$ be an algebraically closed field of characteristic zero, and consider a diagram
\begin{equation*}
\begin{xy}
\xymatrix{
U \ar[r]^{j'} \ar[rd]^j & X'\ar[d]^{\rho} \ar[r]^{\iota}& \mathbb{P}^n\times X\ar[dl]^{\mathrm{pr}}\\
& X
}
\end{xy}
\end{equation*}
of smooth algebraic varieties over $k$, where $j$ and $j'$ are open embeddings, $\iota$ is a closed embedding, and $\mathrm{pr}$ is the natural projection. Assume further that $j'$ is affine. \\
If $\mathcal{M}$ is a coherent $\mathcal{D}_U$-module such that $j'_*\mathcal{M}$ is a coherent $\D_{X'}$-module, then $\mathrm{R}^ij_*\M$ is a coherent $\D_X$-module for every $i\geq 0$.
\end{prop}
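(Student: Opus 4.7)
The plan is to decompose $j$ as $j = \mathrm{pr}\circ\iota\circ j'$ and apply the composition law $(g\circ f)_+ = g_+\circ f_+$ for algebraic $\D$-module direct images, treating each factor in turn. The key preliminary observation is that for an open embedding $j$ we have $j^{-1}\D_X = \D_U$, so the derived $\O$-module pushforward $\mathrm{R} j_*\M$ carries a natural $\D_X$-module structure that agrees with the $\D$-module direct image $j_+\M$; in particular $\mathrm{R}^i j_*\M \cong \mathrm{H}^i(j_+\M)$ for every $i\geq 0$. It therefore suffices to show that each cohomology sheaf of $\mathrm{pr}_+\iota_+ j'_+\M$ is a coherent $\D_X$-module.

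For the affine open embedding $j'$, Serre vanishing gives $\mathrm{R}^q j'_*\mathcal{F} = 0$ for all $q>0$ and all quasi-coherent $\mathcal{F}$; hence $j'_+\M \simeq j'_*\M$ is concentrated in degree zero and is, by hypothesis, a coherent $\D_{X'}$-module. For the closed embedding $\iota$, the algebraic version of Kashiwara's theorem ensures that $\iota_+$ sends coherent $\D_{X'}$-modules to coherent $\D_{\mathbb{P}^n\times X}$-modules, again concentrated in degree zero. Thus $\iota_+ j'_*\M$ is a coherent $\D_{\mathbb{P}^n\times X}$-module.

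The final step is to apply the classical coherence theorem for proper $\D$-module direct images (see, e.g., \cite[Theorem 2.5.1]{HTT}): since $\mathrm{pr}\colon \mathbb{P}^n\times X \to X$ is proper, every cohomology sheaf $\mathrm{H}^i\mathrm{pr}_+\bigl(\iota_+ j'_*\M\bigr)$ is a coherent $\D_X$-module. Combining this with the isomorphism from the first step gives the desired coherence of $\mathrm{R}^i j_*\M$.

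The main obstacle is precisely this last step: the preservation of coherence under the proper direct image $\mathrm{pr}_+$. It is the deepest input here, ultimately resting on Grothendieck's coherence theorem for higher direct images of coherent $\O$-modules under proper morphisms, applied degreewise to the relative de Rham complex that computes $\mathrm{pr}_+$. The remaining reductions, namely Serre vanishing for the affine $j'$ and Kashiwara's theorem for the closed embedding $\iota$, are standard and essentially formal.
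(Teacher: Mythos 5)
Your proof follows essentially the same route as the paper's: decompose $j_+$ as $\mathrm{pr}_+\iota_+ j'_+$, identify $j_+\M$ with $\mathrm{R}j_*\M$ (since the transfer bimodule for an open embedding is trivial), note that affineness of $j'$ forces $j'_+\M = j'_*\M$ in degree zero, apply Kashiwara's theorem for $\iota$, and finish with the proper direct image coherence theorem (HTT 2.5.1) for $\mathrm{pr}$. The paper cites exactly the same ingredients (HTT Proposition 1.5.21, Example 1.5.22, Theorem 1.6.1, Theorem 2.5.1), so the two arguments coincide.
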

\begin{proof}
For any morphism $f: Y'\to Y$ of smooth algebraic varieties over $k$, one can define (\cite[p.40]{HTT}) the $\D$-module pushforward functor  
\begin{align*}
f_+: \mathrm{D}^b(\D_{Y'})\to \mathrm{D}^b(\D_Y).
\end{align*} 
For right modules, this takes the form
\begin{equation*}
\M\mapsto \mathrm{R}f_*(\M\otimes^{\mathbb{L}}_{\D_{Y'}} \D_{Y'\to Y}),
\end{equation*}
where $\D_{Y'\to Y}$ is the transfer bimodule $f^*\D_Y$. For left modules, we compose the above with suitable side-changing operations.\\
\\
By \cite[Proposition 1.5.21, Example 1.5.22]{HTT} and the assumption on $j'$, we have
\begin{equation*}
\mathrm{R}j_*\M=j_+\M=\mathrm{pr}_+\iota_+j'_+\M=\mathrm{pr}_+\iota_+j'_*\M.
\end{equation*}
Write $P=\mathbb{P}^n\times X$.\\
Now $\iota_+$ preserves coherence by Kashiwara's equivalence (\cite[Theorem 1.6.1]{HTT}), and $\mathrm{pr}_+$ sends $\mathrm{D}^b_c(\D_P)$ to $\mathrm{D}^b_c(\D_X)$ by \cite[Theorem 2.5.1]{HTT}. This immediately proves the proposition.
\end{proof}
More explicitly, if $\mathcal{M}$ is a coherent right $\D_U$-module, we can write
\begin{equation*}
\mathrm{R}j_*\M=\mathrm{R}\mathrm{pr}_*(\iota_+j'_*\M\otimes^{\mathbb{L}}_{\D_P}\D_{P\to X}),
\end{equation*}
where $\D_{P\to X}$ is the $(\D_P, \mathrm{pr}^{-1}\D_X)$-bimodule $\mathrm{pr}^*\D_X=\sU_{\O_P}(\mathrm{pr}^*\T_X)$, the enveloping sheaf of the Lie algebroid $\mathrm{pr}^*\T_X$.\\
\\
To prove the general case of Theorem \ref{introthmB}, we will use an embedded resolution of singularities $(X', U)\to (X, U)$ to reduce to the case of an algebraic snc divisor, and an analogue of the above Proposition in order to descend from $X'$ to $X$ again. While a theory of general $\w{\D}$-module pushforwards has not been established yet, it turns out that we can make all computations explicitly in our situation.
\subsection{Pushforward of the structure sheaf on a Zariski open subspace} \label{gencase}
Let $j: U\to X$ be an arbitrary Zariski open embedding of smooth rigid analytic $K$-spaces. As always, we can assume that $X=\Sp A$ is affinoid with free tangent sheaf. \\
Writing $Z=\Sp B$ for the complement of $U$ with its reduced subvariety structure, we obtain a closed embedding $\Spec B\to \Spec A$ of schemes over $\mathbb{Q}$.
We recall the following result from \cite{Temkin}.
\begin{thm}[{\cite[Theorem 1.1.11]{Temkin}}]
\label{ers}
Let $\mathbf{X}$ be a quasi-excellent Noetherian scheme over $\mathbb{Q}$, and let $\mathbf{Z}$ be a closed subscheme of $\mathbf{X}$. Then there exists a regular scheme $\mathbf{X}'$ and a morphism of schemes $\rho: \mathbf{X}'\to \mathbf{X}$ which is a sequence of blow-ups with centres contained in $\mathbf{Z}\cup \mathbf{X}_{\mathrm{sing}}$ such that $\mathbf{Z}\times_{\mathbf{X}} \mathbf{X}'$ is an snc divisor in $\mathbf{X}'$. 
\end{thm}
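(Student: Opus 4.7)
The plan is to follow the Hironaka-style algorithmic approach, adapted to the quasi-excellent Noetherian setting. First I would reduce to the case that $\mathbf{X}$ is itself regular by applying a non-embedded resolution of singularities to $\mathbf{X}$: this produces a composition of blow-ups $\mathbf{X}_1 \to \mathbf{X}$ with regular centres contained in $\mathbf{X}_{\mathrm{sing}}$ such that $\mathbf{X}_1$ is regular, and one then replaces $\mathbf{Z}$ by its scheme-theoretic pullback $\mathbf{Z}\times_{\mathbf{X}}\mathbf{X}_1$. This leaves the purely embedded problem of principalization on a regular ambient scheme.

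For principalization, the goal is to construct a further sequence of blow-ups along regular centres, each contained in the support of the current transform of $\mathbf{Z}$, so that after pullback the ideal $\mathcal{I}_{\mathbf{Z}}$ is locally generated by a product of powers of elements of a regular system of parameters; this is exactly the local shape of an snc divisor. The induction runs on a Hironaka-type local complexity invariant, such as the Hilbert--Samuel function refined by the history of exceptional divisors. The central technique in characteristic zero is the use of hypersurfaces of maximal contact together with the coefficient-ideal construction, which reduces a principalization problem in dimension $n$ to one in dimension $n-1$; existence of such hypersurfaces relies on derivations that strictly drop the order of the ideal, which is available over $\mathbb{Q}$.

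The step I expect to be the main obstacle is precisely the passage from the classical setting of varieties of finite type over a field to a quasi-excellent Noetherian base. In this generality one cannot appeal to generic smoothness over a ground field, and one must instead verify that each step of the algorithm is functorial under regular morphisms, so that local constructions glue to a global sequence of blow-ups, and use the excellence hypothesis to guarantee that orders and Hilbert--Samuel functions are compatible with completion (so that an argument can be transported to the complete local rings where a regular system of parameters is available). Once the algorithm and its functoriality are in place, termination follows from the well-ordering of the invariant and its strict decrease under each admissible blow-up; when the invariant reaches its minimal value, the total transform of $\mathbf{Z}$ has snc support, which is the desired conclusion.
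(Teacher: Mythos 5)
The paper does not prove this theorem: it is recalled directly from Temkin and used as a black box, which is the right thing to do since its proof spans several long papers (Bierstone--Milman, W\l{}odarczyk, Temkin). Your write-up is a reasonably accurate bird's-eye description of that machinery --- reduce to a regular ambient scheme by non-embedded desingularization with centres in $\mathbf{X}_{\mathrm{sing}}$; principalize the pullback of $\mathcal{I}_{\mathbf{Z}}$ by admissible blow-ups using maximal contact and coefficient ideals; establish functoriality of the algorithm under regular morphisms and use excellence to transport constructions through completion. But what you have written is a summary of a strategy, not a proof. You do not construct the complexity invariant, prove that it is well-ordered or that it strictly decreases under the blow-ups the algorithm performs, verify that the centres actually remain inside the transform of $\mathbf{Z}$ and the exceptional locus (a part of the statement you must check, since the conclusion asserts the centres lie in $\mathbf{Z}\cup\mathbf{X}_{\mathrm{sing}}$), or carry out the gluing that functoriality is supposed to supply. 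Each of these is a substantial theorem in its own right, and none is trivially adapted to the quasi-excellent Noetherian setting, which is precisely the point of Temkin's work. In the context of this paper, the correct ``proof'' is exactly what the authors do: cite Temkin's Theorem 1.1.11 and move on.
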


Let $\mathbf{X}=\Spec A$, $\mathbf{Z}=\Spec B$. By \cite[Satz 3.3.3]{BKKN}, $\mathbf{X}$ is an excellent Noetherian scheme, so we can apply Theorem \ref{ers}. As $\mathbf{X}$ is also regular by \cite[Proposition 7.3.2/8]{BGR}, the centres of the blow-ups in the Theorem are contained in $\mathbf{Z}$. In particular, the Theorem provides us with a morphism $\rho: \mathbf{X}'\to \mathbf{X}$ which is an isomorphism away from $\mathbf{Z}$, so that we have the following commutative diagram
\begin{equation*}
\begin{xy}
\xymatrix{
\mathbf{U}\ar[r]^{j'}\ar[rd]_j& \mathbf{X}'\ar[d]^{\rho}\\
& \mathbf{X},
}
\end{xy}
\end{equation*}
where $j:\mathbf{U}\to \mathbf{X}$ is the complement of $\mathbf{Z}$ inside $\mathbf{X}$, and $j'$ realizes $\mathbf{U}$ as the complement of an snc divisor in $\mathbf{X}'$. Finally, we note that $\rho$ is projective, as it is a sequence of blow-ups, so that we obtain a factorization of $\rho$ as a closed immersion and a projection:
\begin{equation*}
\begin{xy}
\xymatrix{
\mathbf{U} \ar[r]^{j'} \ar[rd]_j & \mathbf{X}'\ar[r]^{\iota} \ar[d]& \mathbb{P}^n\times \mathbf{X}\ar[ld]^{\mathrm{pr}}\\
& \mathbf{X}
}
\end{xy}
\end{equation*}

By construction, all schemes in this diagram are $A$-schemes, and applying analytification, we obtain the commutative diagram of rigid analytic $K$-spaces
\begin{equation*}\label{ERS}
\begin{xy}
\xymatrix{
U \ar[r]^{j'} \ar[rd]_j& X'\ar[r]^{\iota}\ar[d]^{\rho}& \mathbb{P}^{n, \mathrm{an}}\times X\ar[ld]^{\mathrm{pr}}\\
& X
}
\end{xy}\tag{$*$}
\end{equation*}
where 
\begin{enumerate}[(i)]
\item $j$ and $j'$ are Zariski open embeddings by Proposition \ref{relana}.(iv), and $(\mathbf{U})^{\mathrm{an}}=(\mathbf{X}\setminus \mathbf{Z})^{\mathrm{an}}=X\setminus Z=U$ by Proposition \ref{relana}.(iii).
\item $X'=\mathbf{X}'^{\mathrm{an}}$ is a smooth rigid analytic $K$-space by \cite[Corollary 1.3.5]{Schoutens} and \cite[Lemma 2.8]{BLR3}.
\item $X'\setminus U$ is an algebraic snc divisor in $\mathbf{X}'$ by construction.
\item $\iota$ is a closed immersion of rigid analytic spaces by Proposition \ref{relana}.(iii). 
\item $(\mathbb{P}^n\times \mathbf{X})^{\mathrm{an}}\cong \mathbb{P}^{n, \mathrm{an}}\times (\mathbf{X})^{\mathrm{an}}=\mathbb{P}^{n, \mathrm{an}}\times X$ by Proposition \ref{relana}.(ii) and (vi).
\end{enumerate}
We are now in a position to discuss the rigid analytic analogue of Proposition \ref{classicalpushforward}. 
\begin{prop}
Let $\M$ be a coadmissible $\w{\D}$-module on $U$. If $j'_*\M$ is a coadmissible $\w{\D}_{X'}$-module, then $\mathrm{R}^ij_*\M$ is a coadmissible $\w{\D}_X$-module for every $i\geq 0$.
\end{prop}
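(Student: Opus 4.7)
My plan is to mirror the algebraic argument in Proposition \ref{classicalpushforward} by combining the factorisation $j = \rho \circ j'$ from the diagram $(*)$ with a Grothendieck spectral sequence. The two main inputs are a vanishing result for the higher direct images along $j'$ (coming from the algebraic snc structure of $X' \setminus U$ together with the weak Stein property established in Proposition \ref{weaklyStein}), and a proper pushforward theorem for coadmissible $\w{\D}$-modules along the proper morphism $\rho$, which is the content of the results from \cite{Bodeproper}.

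First, I would prove that $\mathrm{R}^i j'_* \M = 0$ for all $i > 0$. By property (iii) of the diagram $(*)$, $X' \setminus U$ is an algebraic snc divisor, so Proposition \ref{algsnc} yields an admissible cover of $X'$ by affinoid subdomains $W_k$ such that $W_k \cap U = (W_k)_{g_k}$ for some non-constant $g_k \in \O(W_k)$. This property is preserved under further affinoid refinement: if $W_k' \subseteq W_k$ is an affinoid subdomain, then $W_k' \cap U = (W_k')_{g_k|_{W_k'}}$. Proposition \ref{weaklyStein} then gives $\mathrm{H}^i(V \cap U, \M) = 0$ for $i > 0$ and every $V$ in the resulting basis of affinoids of $X'$, so the presheaf $V \mapsto \mathrm{H}^i(V \cap U, \M)$ already vanishes on a basis, and hence $\mathrm{R}^i j'_* \M = 0$ as a sheaf on $X'$.

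With the hypothesis that $j'_* \M$ is coadmissible and the vanishing just established, the Grothendieck spectral sequence
\begin{equation*}
E_2^{p,q} = \mathrm{R}^p \rho_* \mathrm{R}^q j'_* \M \Longrightarrow \mathrm{R}^{p+q} j_* \M
\end{equation*}
collapses to the row $q=0$, giving natural isomorphisms $\mathrm{R}^i j_* \M \cong \mathrm{R}^i \rho_* (j'_* \M)$ for all $i \geq 0$. The morphism $\rho$ factors as the closed immersion $\iota$ followed by the projection $\mathrm{pr}: \mathbb{P}^{n,\an} \times X \to X$, which is proper because $\mathbb{P}^{n,\an}$ is proper over $K$; hence $\rho$ is itself a proper morphism of rigid analytic spaces. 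Applying the proper pushforward theorem for coadmissible $\w{\D}$-modules from \cite{Bodeproper} to the coadmissible $\w{\D}_{X'}$-module $j'_* \M$ yields that $\mathrm{R}^i \rho_* (j'_* \M)$ is coadmissible, and hence so is $\mathrm{R}^i j_* \M$, for every $i \geq 0$.

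The hardest input is clearly the proper pushforward theorem from \cite{Bodeproper}; everything else in the argument is structural, and without such a result there is no obvious way to descend coadmissibility of $j'_* \M$ along the resolution $\rho$ back down to $X$. A secondary point deserving care is the identification of the abutment of the spectral sequence as a sheaf of $\w{\D}_X$-modules, rather than merely as a sheaf of $K$-vector spaces, but this should be formal given the sheafification and localisation formalism from \cite{DhatOne} recalled in Section 2.
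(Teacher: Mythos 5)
Your observation that $\mathrm{R}^i j'_*\M = 0$ for $i>0$ is correct and follows from the acyclicity in Proposition \ref{weaklyStein}, and the Grothendieck spectral sequence then indeed gives a natural isomorphism of sheaves of $K$-vector spaces $\mathrm{R}^i j_*\M\cong \mathrm{R}^i\rho_*(j'_*\M)$. Up to this point the argument is sound and broadly parallel to the Cech comparison implicit in the paper.

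The genuine gap is in the final step. You invoke a ``proper pushforward theorem for coadmissible $\w{\D}$-modules'' applied directly to the proper morphism $\rho\colon X'\to X$ and the $\w{\D}_{X'}$-module $j'_*\M$, and claim this shows $\mathrm{R}^i\rho_*(j'_*\M)$ is coadmissible. This conflates two different functors. The sheaf-theoretic higher direct image $\mathrm{R}^i\rho_*(j'_*\M)$ carries no natural $\w{\D}_X$-module structure: there is no ring map $\w{\D}_X\to\rho_*\w{\D}_{X'}$, so $\rho_*$ of a $\w{\D}_{X'}$-module is merely a sheaf of $K$-vector spaces on $X$. Any $\w{\D}$-module pushforward theorem — including the result from \cite{Bodeproper} that the paper cites — concerns the $\w{\D}$-module direct image, which requires first tensoring with a transfer bimodule $\w{\D}_{X'\to X}$. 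These two functors do not obviously agree, and the identification you call ``formal'' is in fact the crux of the matter. Moreover, \cite[Theorem 6.11]{Bodeproper} as used in the paper applies to the projection $\mathrm{pr}\colon \mathbb{P}^{n,\mathrm{an}}\times X\to X$ and to $\w{\sU(\mathrm{pr}^*\T_X)}$-modules, not to an arbitrary proper $\rho$; there is no general proper pushforward theorem available here.

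The paper's argument is structured precisely so as to avoid these issues. It first constructs $\iota_+(j'_*\M)$ on $P=\mathbb{P}^{n,\mathrm{an}}\times X$ using the explicit closed-immersion pushforward from \cite{DhatTwo}, then tensors with the transfer bimodule $\w{\sU(\mathrm{pr}^*\T_X)}$ to obtain a coadmissible $\w{\sU(\mathrm{pr}^*\T_X)}$-module $\N$; the theorem from \cite{Bodeproper} is then applicable to $\mathrm{R}^i\mathrm{pr}_*\N$, which is coadmissible over $\mathrm{pr}_*\w{\sU(\mathrm{pr}^*\T_X)}\cong\w{\D}_X$. Finally it establishes, by an explicit comparison of Cech complexes using Corollary \ref{weaklyStein} and the tensor-product computations of Proposition \ref{ctp}, that $\N(V)\cong\M(U\cap V)$ for a suitable cover, hence $\mathrm{R}^i\mathrm{pr}_*\N\cong\mathrm{R}^ij_*\M$ as $\w{\D}_X$-modules. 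To repair your proposal along its current lines you would need to build the transfer bimodule for $\rho$ (or, as the paper does, factor through $\iota$ and $\mathrm{pr}$) and then prove the identification between the $\w{\D}$-module direct image and $\mathrm{R}^i\rho_*(j'_*\M)$; neither step is formal, and together they constitute most of the content of the paper's proof.
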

\begin{proof}
We prove the proposition for right coadmissible $\w{\D}$-modules. The left module analogue follows by applying the sidechanging operators.\\
We write $P=\mathbb{P}^{n, \mathrm{an}}\times X$. By \cite[\textsection 5.4]{DhatTwo}, there exists a right coadmissible $\w{\D}_P$-module $\iota_+(j'_*\M)$ such that the following holds: for each admissible open affinoid $V$ of $P$ with the property that $ X'\cap V$ has a free tangent sheaf, we have
\begin{equation*}
\iota_+(j'_*\M)(V)=(j'_*\M)(X'\cap V)\underset{\w{\D}(X'\cap V)}{\h{\otimes}}\left(\O(X'\cap V)\underset{\O(V)}{\h{\otimes}}\w{\D}(V)\right).
\end{equation*}
We now note that $\mathrm{pr}^*\T_X$ is a Lie algebroid on $P$ which is free as an $\O_P$-module, and is naturally a direct summand of $\T_P$. This makes 
\begin{equation*}
\w{\sU(\mathrm{pr}^*\T_X)}\cong\O_P\underset{\mathrm{pr}^{-1}\O_X}{\h{\otimes}}\mathrm{pr}^{-1}\w{\D}_X
\end{equation*}
a $(\w{\D}_P, \mathrm{pr}^{-1}\w{\D}_X)$-bimodule. \\
\\
We can thus form the right coadmissible $\w{\sU(\mathrm{pr}^*\T_X)}$-module
\begin{equation*}
\N:=\iota_+(j'_*\M)\underset{\w{\D}_P}{\w{\otimes}} \w{\sU(\mathrm{pr}^*\T_X)}.
\end{equation*}
By \cite[Theorem 6.11]{Bodeproper}, $\mathrm{R}^i\mathrm{pr}_*\N$ is coadmissible over $\mathrm{pr}_*\w{\sU(\mathrm{pr}^*\T_X)}\cong\w{\D}_X$ for every $i\geq 0$. We now show that $\mathrm{R}^i\mathrm{pr}_*\N$ is naturally isomorphic to $\mathrm{R}^ij_*\M$.\\
\\
Let $\mathfrak{V}=(V_i)$ be a finite affinoid covering of $P$ where $V_i=W_i\times Y_i$, $W_i$ an admissible open affinoid subspace of $\mathbb{P}^{n, \mathrm{an}}$, $Y_i$ an affinoid subdomain of $X$, and $X'\cap V_i$ has a free tangent sheaf. Refining this covering, we can moreover assume that the complement of $U\cap V_i$ inside $X'\cap V_i$ is cut out by a single equation for each $i$.\\
By comparing the Cech complex $\check{C}^{\bullet}(\mathfrak{V}, \N)$ to $\check{C}^{\bullet}(U\cap \mathfrak{V}, \M)$ and invoking Corollary \ref{weaklyStein}, it suffices to show that we have natural isomorphisms
\begin{equation*}
\N(V)\cong \M(V\cap U)
\end{equation*}
where $V$ is any intersection of the $V_i$.\\
\\
Let $W$ be an admissible open affinoid subspace of $\mathbb{P}^{n, \mathrm{an}}$, $Y=\Sp C$ an affinoid subdomain of $X$ such that $X'\cap(W\times Y)$ has a free tangent sheaf, and write $V=W\times Y$.\\
Now
\begin{equation*}
\N(V)=\iota_+(j'_*\M)(V)\underset{\w{\D}(V)}{\h{\otimes}} \w{\sU(\mathrm{pr}^*\T_X)}(V),
\end{equation*}
and this is isomorphic to
\begin{equation*}
(j'_*\M)(X'\cap V)\underset{\w{\D}(X'\cap V)}{\h{\otimes}}\left(\O(X'\cap V)\underset{\O(V)}{\h{\otimes}} \w{\D}(V)\right)\underset{\w{\D}(V)}{\h{\otimes}}\left(\O(V)\underset{C}{\h{\otimes}} \w{\D}(Y)\right).
\end{equation*}
By associativity of the completed tensor product \cite[Lemma A.3]{Bitoun}, this can be simplified to
\begin{equation*}
(j'_*\M)(X'\cap V)\underset{\w{\D}(X'\cap V)}{\h{\otimes}}(\O(X'\cap V)\underset{C}{\h{\otimes}}\w{\D}(Y)),
\end{equation*}
which in turn can be written as
\begin{equation*}
\left(\M(U\cap V)\underset{\w{\D}_U(U\cap V)}{\h{\otimes}}(\O(U\cap V)\underset{\O(X'\cap V)}{\h{\otimes}}\w{\D}(X'\cap V)\right)\underset{\w{\D}(X'\cap V)}{\h{\otimes}}\left(\O(X'\cap V)\underset{C}{\h{\otimes}} \w{\D}(Y)\right),
\end{equation*}
since $\w{\D}(U\cap V)\cong \O(U\cap V)\underset{\O(X'\cap V)}{\h{\otimes}}\w{\D}_{X'}(X'\cap V)$ by Proposition \ref{ctp}.\\
\\
We thus obtain
\begin{align*}
\N(V)&\cong\M(U\cap V)\underset{\w{\D}_U(U\cap V)}{\h{\otimes}}\left(\O(U\cap V)\underset{C}{\h{\otimes}} \w{\D}_X(Y)\right)\\
&\cong \M(U\cap V)\underset{\w{\D}(U\cap V)}{\h{\otimes}} \left(\O(U\cap V)\underset{\O(U\cap Y)}{\h{\otimes}}(\O(U\cap Y)\underset{C}{\h{\otimes}} \w{\D}(Y)) \right)\\
&\cong \M(U\cap V)\underset{\w{\D}(U\cap V)}{\h{\otimes}} \left(\O(U\cap V)\underset{\O(U\cap Y)}{\h{\otimes}} \w{\D}(U\cap Y) \right)\\ 
& \cong \M(U\cap V)\underset{\w{\D}(U\cap V)}{\h{\otimes}} \w{\D}(U\cap V)\\
& \cong\M(U\cap V),
\end{align*}
by invoking once more Proposition \ref{ctp}, and \cite[Proposition 2.11.(ii)]{Bitoun}.
\end{proof}
\begin{cor}
Let $j:U\to X$ be a Zariski open embedding of smooth rigid analytic $K$-spaces, and let $\M$ be an integrable connection on $X$. Then $\mathrm{R}^ij_*(\M|_U)$ is a coadmissible $\w{\D_X}$-module for every $i\geq 0$.
\end{cor}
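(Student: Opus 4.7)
The plan is to combine the snc-case result (Proposition~\ref{snc}) with the Proposition established at the end of subsection~\ref{gencase}. Since coadmissibility is a local property on $X$, I would first reduce to the case where $X=\Sp A$ is a smooth affinoid with free tangent sheaf. Temkin's Theorem~\ref{ers} applied to $\mathbf{X}=\Spec A$ and the reduced closed subscheme corresponding to $X\setminus U$, followed by analytification, then produces the commutative diagram $(*)$ from subsection~\ref{gencase}: a morphism $\rho\colon X'\to X$ that is an isomorphism over $U$, factoring through a closed immersion $\iota\colon X'\to\mathbb{P}^{n,\mathrm{an}}\times X$ and the projection to $X$, together with an open immersion $j'\colon U\to X'$ whose complement $X'\setminus U$ is an algebraic snc divisor in the smooth rigid analytic space $X'$.

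By the Proposition in subsection~\ref{gencase}, it then suffices to exhibit $j'_*(\M|_U)$ as a coadmissible $\w{\D}_{X'}$-module. To do this, I would pull $\M$ back along $\rho$: since $\rho$ is a morphism of smooth rigid analytic spaces, $\M':=\rho^*\M$ is an integrable connection on $X'$, and because $\rho$ restricts to the identity on $U$ there is a canonical identification $\M'|_U\cong\M|_U$. Hence $j'_*(\M|_U)=j'_*(\M'|_U)$, and the task reduces to showing that the direct image along $j'$ of the restriction to $U$ of an integrable connection on $X'$ is coadmissible. This is the content of Proposition~\ref{snc}, modulo the fact that $X'$ need not itself be affinoid: one covers it by smooth affinoids with free tangent sheaf furnished by Proposition~\ref{algsnc}, on each of which the snc divisor takes the monomial form used in the proof of Proposition~\ref{snc}, applies that proposition to the restriction of $\M'$, and glues the resulting local coadmissible modules using the definition of coadmissibility via admissible coverings.

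Feeding this back into the Proposition from subsection~\ref{gencase} immediately yields that $\mathrm{R}^ij_*(\M|_U)$ is a coadmissible $\w{\D}_X$-module for every $i\geq 0$. The main substantive inputs are already done: Proposition~\ref{snc} contains the $b$-function computation forcing the exponents to be integers (hence of positive type, triggering Theorem~\ref{recallBitoun}(i)), and the Proposition in subsection~\ref{gencase} handles the descent from $X'$ back to $X$ via the Kashiwara-type pushforward $\iota_+$ and the proper pushforward along $\mathrm{pr}$ from \cite{Bodeproper}. The only delicate point that needs checking is the local-to-global passage on $X'$, but this is routine given the sheaf-theoretic definition of coadmissibility, so I do not anticipate a genuine obstacle.
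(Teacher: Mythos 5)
Your proposal is correct and follows the paper's proof essentially step for step: reduce to $X$ affinoid with free tangent sheaf, invoke the resolution diagram $(*)$ from subsection~\ref{gencase}, apply Proposition~\ref{snc} to obtain coadmissibility of $j'_*(\M|_U)$ on $X'$, and then feed this into the Proposition of subsection~\ref{gencase}. You make explicit an implicit step in the paper, namely that Proposition~\ref{snc} is applied with the integrable connection $\rho^*\M$ on $X'$ (using that $\rho$ is an isomorphism over $U$, so $\rho^*\M|_U\cong\M|_U$); note also that your worry about $X'$ not being affinoid is already absorbed into Proposition~\ref{snc}, which is stated for a general smooth rigid analytic $K$-variety and performs the reduction to the affinoid case internally.
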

\begin{proof}
Without loss of generality, we can assume that $X$ is affinoid with free tangent sheaf. We therefore can consider the diagram (\ref{ERS}). \\
By Proposition \ref{snc}, $j'_*(\M|_U)$ is a coadmissible $\w{\D}_{X'}$-module. Now apply Theorem \ref{gencase}. 
\end{proof}

\subsection{Weak holonomicity}\label{pushfweakhol}
\begin{lem}
Let $X=\Sp A$ be a smooth affinoid $K$-space, and let $U=X_f$ for $f\in A$ non-constant. Let $\M$ be an integrable connection on $X$. Then $j_*(\M|_U)$ is a weakly holonomic $\w{\D}_X$-module, where $j: U \to X$ is the natural embedding.
\end{lem}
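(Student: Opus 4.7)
The plan is to identify the pushforward $j_*(\M|_U)$ with the extension $E_X(M[f^{-1}])$ of the algebraic localisation along the functor of Section \ref{extn}, and then appeal to Proposition \ref{extn}, which states that $E_X$ preserves dimension. Since weak holonomicity is local, I may assume $X = \Sp A$ is smooth affinoid with free tangent sheaf and work with the global sections $N := j_*(\M|_U)(X)$; by the coadmissibility result of Section \ref{gencase}, $N$ is already a coadmissible $\w{\D}(X)$-module, so the only remaining point is the dimension bound $d(N) \le \dim X$.

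Write $M = \M(X)$. As $\M$ is an integrable connection, $M$ is a coherent $\D(X)$-module of minimal dimension $\dim X$ by \cite[Proposition 6.2]{DhatTwo}. The existence of Bernstein--Sato polynomials in the Mebkhout--Narvaez-Macarro setting ensures that the algebraic localisation $M[f^{-1}]$ is a coherent $\D(X)$-module, and it likewise has minimal dimension $\dim X$. Hence $E_X(M[f^{-1}]) = \w{\D}(X) \otimes_{\D(X)} M[f^{-1}]$ is a coadmissible $\w{\D}(X)$-module of dimension exactly $\dim X$ by Proposition \ref{extn}.

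The crux is to exhibit a natural isomorphism $\varphi \colon E_X(M[f^{-1}]) \xrightarrow{\sim} N$. The $\D(X)$-linear inclusion $M[f^{-1}] \hookrightarrow \M(U) = N$ extends by $\w{\D}(X)$-linearity to a canonical map $\varphi$, and checking $\varphi$ is an isomorphism amounts to matching two Fr\'echet--Stein presentations. Concretely, $N$ is realised by the Bitoun--Bode construction of Theorem \ref{recallBitoun}.(i) as an inverse limit of Banach completions of $M[f^{-1}]$ along the $\pi^n\L$-accessible subdomains $X(\pi^n f^{-1})$ exhausting $U$, while $E_X(M[f^{-1}]) = \varprojlim D_n \otimes_{\D(X)} M[f^{-1}]$ carries the standard Fr\'echet--Stein structure coming from $\w{\D}(X) = \varprojlim D_n$. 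Proposition \ref{ctp} describing $\w{\D}(X_f)$ via completed tensor products, together with the faithful flatness of $\w{\D}(X)$ over $\D(X)$ from Theorem \ref{introthmC}, should be the key tools here. Once $\varphi$ is shown to be an isomorphism, Proposition \ref{extn} yields $d(N) = d_{\D(X)}(M[f^{-1}]) = \dim X$.

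The principal obstacle is precisely this identification step: two a priori distinct completion procedures --- the analytic localisation $\O(U) \otimes_A M$ in which $N$ sits, and the $\w{\D}(X)$-completion of the algebraic module $M[f^{-1}]$ --- must be shown to produce the same $\w{\D}(X)$-module. Reconciling them on each Banach piece requires careful bookkeeping of both inverse systems and a direct verification that the induced map is an isomorphism of $D_n$-modules for $n$ sufficiently large.
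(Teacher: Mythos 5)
Your proposal identifies the right ingredients---the coadmissibility of $N = j_*(\M|_U)(X)$ from Section \ref{gencase}, the theorem of Mebkhout--Narvaez-Macarro that $M[f^{-1}]$ has minimal dimension over $\D(X)$, the exact and faithful extension functor $E_X$ of Proposition \ref{extn}, and the natural $\w{\D}(X)$-linear map $\varphi\colon E_X(M[f^{-1}]) \to N$---but the argument has a genuine gap at exactly the step you flag as the ``principal obstacle.'' You want $\varphi$ to be an \emph{isomorphism}, and you do not prove this; the proposal ends with ``requires careful bookkeeping'' and ``should be the key tools,'' i.e.\ the identification is left open.

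The paper sidesteps this difficulty entirely. It does not claim $\varphi$ is injective, only that it is a \emph{surjection} of coadmissible $\w{\D}(X)$-modules, which is exactly what \cite[Proposition 2.14]{Bitoun} provides. Surjectivity is enough: placing $\M(U)$ at the right of the short exact sequence
\[
0 \to \ker\varphi \to E_X(M[f^{-1}]) \to \M(U) \to 0
\]
and invoking Proposition \ref{weakhol} (weak holonomicity passes to subquotients in short exact sequences of coadmissible modules) immediately gives $d_{\w{\D}(X)}(\M(U)) \leq \dim X$, and Bernstein's inequality gives the reverse. Proving that $\ker\varphi = 0$ is both harder than needed and not established in the literature you cite --- Proposition \ref{ctp} describes $\w{\D}(U)$ as a completed tensor product over $\O(U)$, not a comparison of $E_X(M[f^{-1}])$ with the analytic completion of $M$ along the tube system exhausting $U$, and Theorem \ref{introthmC} (faithful flatness) does not by itself control the kernel of a map out of a coadmissible completion. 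To make your route work you would need to show $\varphi$ is injective by hand on each Banach layer $D_n\otimes_{\D(X)}M[f^{-1}] \to M_n$, which is a nontrivial computation the paper avoids. Replace the isomorphism claim with the surjectivity statement from \cite{Bitoun} and conclude via the short-exact-sequence stability of weak holonomicity, and the argument closes.
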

\begin{proof}
Without loss of generality, we can assume that $\T_X$ is free as an $\O_X$-module. By Corollary \ref{gencase}, $j_*(\M|_U)$ is a coadmissible $\w{\D_X}$-module, and it remains to show that the dimension of $\M(U)$ as a $\w{\D}_X(X)$-module is $\dim X$. We write $M=\M(X)$.\\
By \cite[Th\'eor\`eme 3.2.1]{Mebkhout}, $M[f^{-1}]$ is a $\D(X)$-module of minimal dimension, so that 
\begin{equation*}
d(\w{\D}(X)\otimes_{\D(X)} M[f^{-1}])=\dim X
\end{equation*} 
by Proposition \ref{extn}. But by \cite[Proposition 2.14]{Bitoun}, the natural morphism
\begin{equation*}
\theta: \w{\D}(X)\otimes_{\D(X)}M[f^{-1}]\to \M(U)
\end{equation*}
is a surjection of coadmissible $\w{\D}(X)$-modules, so $d_{\w{\D}(X)}(\M(U))=\dim X$ by Proposition \ref{weakhol}. 
\end{proof}
We can now give a proof of Theorem \ref{introthmB}.
\begin{thm}
Let $j: U\to X$ be a Zariski open embedding of smooth rigid analytic $K$-spaces, and let $\M$ be an integrable connection on $X$. Then $\mathrm{R}^ij_*(\M|_U)$ is a coadmissible, weakly holonomic $\w{\D}_X$-module for each $i\geq 0$.
\end{thm}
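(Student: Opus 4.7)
Coadmissibility of $\mathrm{R}^i j_*(\M|_U)$ is already contained in Corollary \ref{gencase}, so only the bound $d(\mathrm{R}^i j_*(\M|_U)) \leq \dim X$ remains. Since dimension is local (Lemma \ref{dimtheorysheaf}), I work under the assumption that $X = \Sp A$ is a smooth affinoid with free tangent sheaf, and invoke the resolution-of-singularities diagram (\ref{ERS}), factoring $j$ as
\begin{equation*}
U \xrightarrow{\;j'\;} X' \xrightarrow{\;\iota\;} P = \mathbb{P}^{n,\mathrm{an}} \times X \xrightarrow{\;\mathrm{pr}\;} X,
\end{equation*}
where $X' \setminus U$ is an algebraic snc divisor and $\rho := \mathrm{pr}\circ\iota$ is proper.

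The first key step is to show that $\N' := j'_*(\M|_U)$ is weakly holonomic on $X'$ and has vanishing higher direct images under $j'$. Using Proposition \ref{algsnc}, I cover $X'$ by affinoids $V_\alpha$ such that $X'\setminus U$ is cut out on $V_\alpha$ by a single element $f_\alpha = \prod_i x_i^\alpha$; then $U\cap V_\alpha = (V_\alpha)_{f_\alpha}$. Proposition \ref{weaklyStein} gives $\mathrm{H}^p(U\cap V_\alpha, \M|_U) = 0$ for $p > 0$, so $\mathrm{R}^p j'_*(\M|_U) = 0$ for $p > 0$; and Lemma \ref{pushfweakhol} shows that $\N'|_{V_\alpha}$ is weakly holonomic. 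Since both properties are local, $\N'$ is weakly holonomic on all of $X'$, and the Leray spectral sequence for $j = \rho\circ j'$ collapses to $\mathrm{R}^i j_*(\M|_U) \cong \mathrm{R}^i\rho_*\N'$.

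The second key step is to push $\N'$ through the proper morphism $\rho$. Kashiwara's equivalence (Lemma \ref{weakhol}) converts $\N'$ into a weakly holonomic $\w{\D}_P$-module $\iota_+\N'$ supported on $X'$ with no higher direct image along $\iota$, reducing the problem to showing that $\mathrm{R}^i\mathrm{pr}_*(\iota_+\N')$ is a weakly holonomic $\w{\D}_X$-module. I would invoke the identification from the proof of Proposition \ref{gencase} that $\mathrm{R}^i j_*(\M|_U) \cong \mathrm{R}^i\mathrm{pr}_*\N$, where $\N = \iota_+\N' \w{\otimes}_{\w{\D}_P} \w{\sU(\mathrm{pr}^*\T_X)}$, and compute via Čech cohomology on the standard affinoid cover of $\mathbb{P}^{n,\mathrm{an}}$ by $n+1$ unit polydiscs, using \cite[Theorem 6.11]{Bodeproper}. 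The Čech terms $\N(W\times X)$ then identify (by the proof of Proposition \ref{gencase}) with $\M(U\cap(W\times X))$, which is weakly holonomic as a $\w{\D}(W\times X)$-module by Lemma \ref{pushfweakhol}.

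The main obstacle is in this last step: converting the bound $d_{\w{\D}(W\times X)}(\M(U\cap(W\times X))) \leq \dim(W\times X)$ into the sharper bound $d_{\w{\D}(X)}(\mathrm{R}^i\mathrm{pr}_*\N) \leq \dim X$. The extra $\dim W$ must be absorbed by the direct image along the projection, and I expect to use a spectral sequence on the Čech double complex together with Auslander's condition and the stability of weak holonomicity under subquotients (Proposition \ref{weakhol}) to make this work. Making the dimension bookkeeping precise is the heart of the argument.
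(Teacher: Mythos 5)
Your proposal correctly separates coadmissibility (from Corollary \ref{gencase}) from the dimension bound, and correctly localises to the affinoid case, but then takes a genuinely different route from the paper's, and that route has a real gap which you yourself flag but do not close. You go back through the resolution-of-singularities diagram (\ref{ERS}) and try to track the dimension bound along $\iota_+$ and then along $\mathrm{pr}_*$. The step along $\iota_+$ is fine (Kashiwara's equivalence preserves weak holonomicity by Lemma \ref{weakhol}), but pushing along $\mathrm{pr}$ is exactly where the trouble lies: the \v{C}ech terms $\N(W\times Y)$ live over the Lie algebroid $\mathrm{pr}^*\T_X$ on a space of dimension $\dim X+n$, and the paper's dimension theory does not supply a statement saying that the $\w{\D}(Y)$-grade of such a term increases by $n$, nor does it develop a pushforward functor for which an Auslander-type inequality over the base holds. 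Your phrase ``making the dimension bookkeeping precise is the heart of the argument'' is accurate, but it is precisely the part you have not done, and the tools in the paper (Auslander condition, stability of weak holonomicity under subquotients) do not obviously yield it, because the spectral sequence you invoke would need a grade estimate for the $E_1$-terms as $\w{\D}(X)$-modules, not as modules over $\w{\D}$ of the larger space.

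The paper avoids this entirely. Once coadmissibility is known from Corollary \ref{gencase}, it never revisits the resolution for the dimension bound. Instead it writes $U=X\setminus V(f_1,\dots,f_r)$ directly and covers $U$ by the hypersurface complements $X_{f_i}$ \emph{of $X$ itself}. Corollary \ref{weaklyStein} identifies $\mathrm{R}^ij_*(\M|_U)(X)$ with the \v{C}ech cohomology $\check{\mathrm{H}}^i(\mathfrak{V},\M|_U)$ of this cover, and every term of the \v{C}ech complex is $\M(X_{f_{i_1}\cdots f_{i_k}})$, a single-hypersurface-complement situation where Lemma \ref{pushfweakhol} already gives weak holonomicity over $\w{\D}(X)$. (Lemma \ref{pushfweakhol} in turn rests on Mebkhout's minimal-dimension theorem for $M[f^{-1}]$, Proposition \ref{extn} comparing $\D$- and $\w{\D}$-dimensions, and the surjection $\w{\D}(X)\otimes_{\D(X)}M[f^{-1}]\twoheadrightarrow\M(X_f)$.) Since $\C_X^{\mathrm{wh}}$ is abelian (Proposition \ref{weakhol}), the cohomology of a complex of weakly holonomic modules is weakly holonomic, and that finishes the proof. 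The lesson is that the single-hypersurface case applied over $X$ itself, combined with the fact that any closed analytic subset is a finite intersection of hypersurfaces, removes the need to push the dimension bound through the projection $\mathrm{pr}$.
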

\begin{proof}
Without loss of generality, $X=\Sp A$ is affinoid with free tangent sheaf. There are finitely many elements $f_1, \dots, f_r\in A$ such that $U=X\setminus V(f_1, \dots, f_r)$. Note that the $X_{f_i}$ form an admissible covering of $U$, which we denote by $\mathfrak{V}$.\\
\\
By Corollary \ref{weaklyStein}, $\mathrm{R}^ij_*(\M|_U)(X)\cong \check{\mathrm{H}}^i(\mathfrak{V}, \M|_U)$. By Lemma \ref{pushfweakhol}, $\check{C}^\bullet(\mathfrak{V}, \M|_U)$ is a complex of coadmissible $\w{\D}_X(X)$-modules of minimal dimension, so $\mathrm{R}^ij_*(\M|_U)(X)$ is a coadmissible $\w{\D}_X(X)$-module of minimal dimension by Proposition \ref{weakhol}.
\end{proof}
\subsection{Local cohomology}\label{loccoh}
Let $X$ be a smooth rigid analytic $K$-space and let $Z$ be a closed analytic subset. The local cohomology sheaf functor $\underline{H}^i_Z(-)$ is then, as usual, the $i$th derived functor of $\underline{H}^0_Z(-)$, which assigns to a coherent $\O_X$-module its maximal subsheaf with support in $Z$ (see \cite[Definition 2.1.3]{Kisin99}).
\begin{thm}
Let $Z$ be a closed analytic subset of a smooth rigid analytic $K$-space $X$. Let $\M$ be an integrable connection on $X$. Then $\underline{H}^i_Z(\M)$ is a coadmissible, weakly holonomic $\w{\D}_X$-module for each $i\geq 0$.
\end{thm}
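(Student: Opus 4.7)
The strategy is to reduce to Theorem \ref{introthmB} via the standard distinguished triangle relating local cohomology to pushforward along the open complement. Write $U = X \setminus Z$ and let $j \colon U \to X$ be the corresponding Zariski open embedding.

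The key input is the distinguished triangle
\[
R\underline{\Gamma}_Z(\M) \to \M \to Rj_*(\M|_U) \to R\underline{\Gamma}_Z(\M)[1]
\]
in the derived category of sheaves of $\w{\D}_X$-modules on $X_{\rig}$, arising from the functorial short exact sequence $0 \to \underline{\Gamma}_Z \to \mathrm{id} \to j_* j^{-1}$. Taking the associated long exact sequence of cohomology, and using that $\M$ is concentrated in degree zero, I obtain the four-term exact sequence
\[
0 \to \underline{H}^0_Z(\M) \to \M \to j_*(\M|_U) \to \underline{H}^1_Z(\M) \to 0
\]
together with isomorphisms $\underline{H}^i_Z(\M) \cong \mathrm{R}^{i-1} j_*(\M|_U)$ for every $i \geq 2$.

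From here the argument is formal. The integrable connection $\M$ is coadmissible and weakly holonomic by the corollary in subsection \ref{extn}, and by Theorem \ref{introthmB} each $\mathrm{R}^i j_*(\M|_U)$ is coadmissible and weakly holonomic. Since $\C^{\mathrm{wh}}_X$ is an abelian subcategory of $\C_X$ by the proposition in subsection \ref{weakhol}, the kernel and cokernel of any $\w{\D}_X$-linear map between coadmissible weakly holonomic modules are themselves coadmissible and weakly holonomic. For $i \geq 2$ the conclusion is immediate from the isomorphism; for $i = 0$ and $i = 1$ it follows from applying this closure property to the restriction morphism $\M \to j_*(\M|_U)$.

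The main technical point to check is that the sequence above is genuinely a sequence of coadmissible $\w{\D}_X$-modules, rather than one of mere $\O_X$-modules or abelian sheaves. This amounts to verifying that $\underline{\Gamma}_Z$ and $Rj_*$ can be computed compatibly with the $\w{\D}_X$-action (so that the connecting maps are $\w{\D}_X$-linear) and that the resulting $\w{\D}_X$-submodules and quotients of coadmissible modules remain coadmissible. Both points are local on $X$, and on affinoids they reduce to the fact that the category of coadmissible modules over a Fr\'echet--Stein algebra is abelian.
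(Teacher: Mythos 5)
Your proposal is correct and follows essentially the same route as the paper: the four-term exact sequence and the isomorphisms $\underline{H}^i_Z(\M)\cong \mathrm{R}^{i-1}j_*(\M|_U)$ for $i\geq 2$ (the paper cites \cite[Proposition 2.1.4]{Kisin99} for this), combined with Theorem \ref{pushfweakhol} and the abelianness of $\C^{\mathrm{wh}}_X$. The only cosmetic difference is that the paper asserts the exact sequence directly at the level of $\w{\D}_X$-modules rather than deriving it from a distinguished triangle, but the underlying argument is identical.
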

\begin{proof}
Suppose that $X$ is affinoid with free tangent sheaf. Let $j:U\to X$ be the complement of $Z$. As in \cite[Proposition 2.1.4]{Kisin99}, we can consider the exact sequence
\begin{equation*}
0 \to \underline{H}^0_Z(\M)\to \M\to j_*(\M|_U)\to \underline{H}^1_Z(\M)\to 0,
\end{equation*}
as well as the isomorphism $\underline{H}^i_Z(\M)\cong \mathrm{R}^{i-1}j_*(\M|_U)$ for any $i\geq 2$. Thus the result follows immediately from Theorem \ref{pushfweakhol} for $i\geq 2$, and from the fact that $\C_X^{\mathrm{wh}}$ is an abelian category for $i=0, 1$.
\end{proof}
\bibliography{references}
\bibliographystyle{plain}

\end{document}